\newtheorem{theorem}{Theorem}[section]
\newtheorem{lemma}[theorem]{Lemma}
\newtheorem{corollary}[theorem]{Corollary}
\newtheorem{proposition}[theorem]{Proposition}
\numberwithin{equation}{section}
\theoremstyle{definition}
\newtheorem{remark}[theorem]{Remark}
\def\leq{\leqslant }
\def\geq{\geqslant}
\begin{document}

\title[Fractal geometry of $M\setminus L$]{Fractal geometry of the complement of Lagrange spectrum in Markov spectrum}

\author[C. Matheus and C. G. Moreira]{Carlos Matheus and Carlos Gustavo Moreira}

\address{Carlos Matheus:
Centre de Math\'ematiques Laurent Schwartz, CNRS (UMR 7640), \'Ecole Polytechnique, 91128 Palaiseau, France.
}

\email{matheus.cmss@gmail.com}

\address{Carlos Gustavo Moreira: School of Mathematical Sciences,   
Nankai University, Tianjin 300071, P. R. China, and 
IMPA, Estrada Dona Castorina 110, 22460-320, Rio de Janeiro, Brazil
}

\email{gugu@impa.br}

\date{\today}

\begin{abstract} 
The Lagrange and Markov spectra are classical objects in Number Theory related to certain Diophantine approximation problems. Geometrically, they are the spectra of heights of geodesics in the modular surface. 

These objects were first studied by A. Markov in 1879, but, despite many efforts, the structure of the complement $M\setminus L$ of the Lagrange spectrum $L$ in the Markov spectrum $M$ remained somewhat mysterious. In fact, it was shown by G. Freiman (in 1968 and 1973) and M. Flahive (in 1977) that $M\setminus L$ contains infinite \emph{countable} subsets near 3.11 and 3.29, and T. Cusick conjectured in 1975 that all elements of $M\setminus L$ were $<\sqrt{12}=3.46\dots$, and this was the \emph{status quo} of our knowledge of $M\setminus L$ until 2017.    

In this article, we show the following two results. First, we prove that $M\setminus L$ is \emph{richer} than it was previously thought because it contains a Cantor set of Hausdorff dimension larger than $1/2$ near $3.7$: in particular, this solves (negatively) Cusick's conjecture mentioned above. Secondly, we show that $M\setminus L$ is \emph{not} very thick: its Hausdorff dimension is strictly smaller than one. 
\end{abstract}
\maketitle


\section{Introduction}

The (classical) Lagrange and Markov spectra are subsets of the real line related to Diophantine approximation problems. More precisely, the \emph{Lagrange spectrum} is
$$L:=\left\{\limsup\limits_{\substack{p, q\to\infty \\ p, q\in\mathbb{Z}}} \frac{1}{|q(q\alpha-p)|}<\infty:\alpha\in\mathbb{R}-\mathbb{Q}\right\}$$
and the \emph{Markov spectrum} is
$$M:=\left\{\frac{1}{\inf\limits_{\substack{(x,y)\in\mathbb{Z}^2\\(x,y)\neq(0,0)}} |q(x,y)|}<\infty: q(x,y)=ax^2+bxy+cy^2 \textrm{ real indefinite, }  b^2-4ac=1 \right\}$$

These sets were intensively studied by several authors (including Hurwitz, Frobenius, Perron, ...) since the seminal works \cite{Ma1} and \cite{Ma2} of Markov from 1879 and 1880 establishing (among other things) that 
$$L\cap (-\infty, 3) = M\cap (-\infty, 3) = \left\{\sqrt{5}<\sqrt{8}<\frac{\sqrt{221}}{5}<\dots\right\}$$ consists of an \emph{explicit} increasing sequence of quadratic surds accumulating only at $3$.

Hall \cite{Ha} proved in 1947 that $L\supset [c,\infty)$ for some constant $c>3$. For this reason, a half-line $[c,\infty)$ contained in the Lagrange spectrum is called a \emph{Hall ray}. Freiman \cite{Fr73b} and Schecker \cite{Sch77} proved that $[\sqrt{21},\infty)\subset L$, and Freiman \cite{Fr75} determined in 1975 the biggest half-line $[c_F,\infty)$ contained in the Lagrange spectrum, namely,
$$c_F:=\frac{2221564096+283748\sqrt{462}}{491993569} \simeq 4.5278\dots$$
The constant $c_F$ is called \emph{Freiman's constant}.

In general, it is known that $L\subset M$ are closed subsets of $\mathbb{R}$. The results of Markov, Hall and Freiman mentioned above imply that the Lagrange and Markov spectra coincide below $3$ and above $c_F$. Nevertheless, it took a certain time to decide whether these two sets were the same: in fact, Freiman \cite{Fr68} showed in 1968 that $M\setminus L\neq \emptyset$ by exhibiting a countable subset of isolated points of $M\setminus L$ near $3.11$; after that, Freiman proved in 1973 that $M\L$ contains a point $\alpha_{\infty}$ near $3.29$, and Flahive showed in 1977 that $\alpha_{\infty}$ is the accumulation point of a countable subset of $M\L$ near $3.29$. 

This state of affairs led Cusick \cite{C} to conjecture in 1975 that the Lagrange and Markov spectra coincide after $\sqrt{12}$, i.e., $(M\setminus L)\cap [\sqrt{12},\infty)=\emptyset$: in fact, one reads at page 516 the phrase ``I think it is likely that $L$ and $M$ coincide above $\sqrt{12}=3.46410$''. 

The reader is invited to consult the excellent book \cite{CF} of Cusick--Flahive for a beautiful review of the literature produced on this topic until 1989, and the recent article \cite{Mo} of the second author for more discussions of the fractal geometry of $L$ and $M$. 

\subsection{Statement of the main results}

The first main result of this paper (extending the analysis in our two previous papers \cite{MaMo1}, \cite{MaMo2}) answers Cusick's conjecture by showing that $M\setminus L$ near $3.7$ is \emph{richer} than countable subsets:

\begin{theorem}\label{t.1} The intersection of $M\setminus L$ with the interval $(3.7, 3.71)$ has Hausdorff dimension $>0.53128$ (and, a fortiori, $(M\setminus L)\cap (3.7, 3.71)\neq\emptyset$). 
\end{theorem}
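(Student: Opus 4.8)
The plan is to realize the piece of $M \setminus L$ near $3.7$ as (the image under a Markov-type coding of) a subset of a Cantor set built from a carefully chosen gap in the continued fraction dynamics, and then to bound the Hausdorff dimension of that Cantor set from below. Recall that both spectra are encoded by bi-infinite sequences $\theta = (a_n)_{n\in\Zset}$ of positive integers via the function $f(\theta) = a_0 + [0;a_1,a_2,\dots] + [0;a_{-1},a_{-2},\dots]$; the Markov value of $\theta$ is $m(\theta) = \sup_{n} f(\sigma^n\theta)$ (where $\sigma$ is the shift) and the Lagrange value is $\ell(\theta) = \limsup_{n\to+\infty} f(\sigma^n\theta)$. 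The mechanism for producing elements of $M\setminus L$ is classical: one arranges a sequence whose supremum is attained only at a ``central block'' that does \emph{not} recur, while all shifts far from the center have strictly smaller $f$-value. Concretely, I would fix a finite word $w$ (a refinement of the Freiman–Flahive words living near $3.7$) with the property that $f$ evaluated at the bi-infinite completion with center $w$ equals a target value $t \in (3.7, 3.71)$, and then consider all bi-infinite sequences of the form $\dots \beta_{-2}\,\beta_{-1}\, \underline{w}\, \gamma_1\, \gamma_2 \dots$ where each $\beta_i, \gamma_j$ ranges over a fixed finite alphabet $\mathcal{B}$ of admissible blocks chosen so that (i) every such sequence is admissible (no forbidden Markov transitions), (ii) the $f$-value at every shift other than the one centered at $w$ is $< t - \delta$ for some fixed $\delta > 0$, so the Markov value is exactly $t$ and is attained uniquely, forcing the point into $M \setminus L$, and (iii) the map sending the one-sided sequence $(\gamma_1,\gamma_2,\dots)$ (say) to the real number $[0;\gamma_1,\gamma_2,\dots]$-type sum produces a Cantor set of controlled dimension.

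Granting such a combinatorial family, the Hausdorff dimension computation reduces to estimating the dimension of the Cantor set $K(\mathcal{B}) = \{[0; \gamma_1, \gamma_2, \dots] : \gamma_i \in \mathcal{B}\}$ (or rather the sum of two such one-sided Cantor sets, one for the forward tail and one for the backward tail). For this I would use the standard distortion estimates for continued fractions: the cylinder corresponding to $(\gamma_1,\dots,\gamma_n)$ has length comparable to $\prod_{i} |I_{\gamma_i}|$ up to a bounded multiplicative constant (bounded distortion of Möbius maps with integer entries), so a Falconer-type mass distribution / pressure argument gives the lower bound. In practice one exhibits an explicit finite set of allowed blocks $\mathcal{B}$, writes down the associated transition matrix, and shows that the unique $s$ solving the pressure equation $P(s) = 0$ (equivalently, the spectral radius of the $s$-weighted transition matrix equals $1$) satisfies $s > 0.53128$. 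The dimension of $M\setminus L$ near $3.7$ is then $\geq \dim_H K(\mathcal{B}) = s$ (up to the trivial remark that adding the second, backward Cantor factor only increases dimension, and that the coding map is bi-Lipschitz onto its image by bounded distortion).

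The main obstacle — and where essentially all the work lies — is step (ii): choosing the alphabet $\mathcal{B}$ of blocks large enough that the resulting Cantor set has dimension genuinely above $1/2$, while simultaneously keeping $m(\theta) = t$ with the supremum attained \emph{only} at the center. These two requirements pull against each other: richer alphabets make it harder to control $\sup_n f(\sigma^n\theta)$, because long concatenations of high-value blocks can create new positions with large $f$-value that either exceed $t$ (pushing the point out of the target interval, or even changing the Markov value) or tie the central value (pushing the point into $L$). Overcoming this requires a delicate finite analysis: one must understand precisely which finite words can appear as ``spikes'' of value close to $t$, quarantine them to the center by the choice of $\mathcal{B}$, and verify — essentially by a finite but intricate case check on how continued fraction values depend on a bounded window of digits — that every competing position stays below $t - \delta$. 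This is exactly the kind of analysis carried out in the authors' earlier papers \cite{MaMo1}, \cite{MaMo2}, and the new input here is presumably a cleverer choice of blocks near $3.7$ that simultaneously achieves the dimension bound $0.53128$ and the separation property; the numerical threshold $0.53128$ strongly suggests that the final bound comes from a concrete transition matrix whose weighted spectral radius is evaluated explicitly.
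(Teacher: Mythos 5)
Your high-level strategy --- build a Cantor family of sequences whose Markov values land in a small interval disjoint from $L$, and bound its dimension via distortion estimates for continued fractions --- matches the paper's framework, and you correctly guess that $0.53128$ is the Jenkinson--Pollicott value of $HD(E_2)$ with $E_2 = K(\{1,2\})$. But there is a genuine gap in your step (ii). Having $m(\theta)=t$ attained at a unique position, with every other shift bounded strictly below, only shows that the Markov and Lagrange values of $\theta$ \emph{itself} differ; it does not show $t\notin L$. The Lagrange spectrum is a set of real numbers, and $t$ could be the Lagrange value of an entirely different sequence. A concrete counterexample: for $\theta=\dots 1\,1\,5^*\,1\,1\dots$ one has $m(\theta)=4+\sqrt{5}$ attained only at the center and $\ell(\theta)=\sqrt{5}$, yet $4+\sqrt{5}>c_F$ lies in Hall's ray, so $m(\theta)\in L$. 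Nearly all of the technical content of Sections \ref{s.uniqueness} and \ref{s.replication} is devoted precisely to the step you elide: showing that the \emph{entire interval} $(3.70969985968,\,3.70969985975033)$ is disjoint from $L$ (Proposition \ref{p.1}). The mechanism is a two-part induction --- a local uniqueness result (Corollary \ref{c.7}: any sequence whose $\lambda_0$ lies in a tiny window must contain the long word $2332221233222123322$ centered there) followed by a replication lemma (Corollary \ref{c.replication}: if in addition the Markov value stays below the window's upper endpoint, that word is forced to extend periodically to the left by $3322212$). Iterating, any would-be Lagrange sequence with value in the window collapses to $\overline{3322212}$, whose $\lambda_0$ falls \emph{below} the window --- a contradiction.

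The replication lemma also explains why your picture of two independently free tails $\dots\beta_{-2}\beta_{-1}\underline{w}\gamma_1\gamma_2\dots$ cannot be implemented here. The central word $3322212$ is deliberately non-semi-symmetric, and the content of Lemma \ref{l.replication} and Corollary \ref{c.replication} is that \emph{any} deviation of the left tail from exact periodicity pushes the Markov value above the interval's upper endpoint. Hence the family $C$ of Proposition \ref{p.2} has its left tail \emph{forced} to $\overline{3322212}$ and only the right tail free in $\{1,2\}^{\mathbb{N}}$; the dimension lower bound is therefore not a bespoke transition-matrix computation as you propose but literally $HD(E_2)$. Your worry that ``richer alphabets make it harder to control $\sup_n f(\sigma^n\theta)$'' is exactly right, and the resolution here is extreme: the alphabet on one side collapses to a single periodic choice.
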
  

\begin{remark} We explain in Appendix \ref{s.Cantor-Cusick} that our proof of Theorem \ref{t.1} actually give more details about $M\setminus L$: for instance, our arguments allow to compute the largest \emph{known} element $\Upsilon$ of $M\setminus L$. 
\end{remark}

The second main result in this article says that $M\setminus L$ \emph{doesn't} have full Hausdorff dimension (and, hence, it is \emph{not} very thick): 

\begin{theorem}\label{t.prrova}
$HD(M\setminus L) < 0.986927$.
\end{theorem}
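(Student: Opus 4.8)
The natural framework is the symbolic/continued-fraction coding of the Markov and Lagrange spectra. Every element of $M$ is of the form $\lambda(\underline{a}) = \sup_n [a_n; a_{n+1}, a_{n+2}, \dots] + [0; a_{n-1}, a_{n-2}, \dots]$ (suitable normalization) for a bi-infinite sequence $\underline{a} = (a_n)_{n\in\mathbb{Z}}$ of positive integers, and the value is realized by a sequence of positions $n_j$ where the "local maxima" are attained. The Lagrange spectrum is obtained by the same construction but with $\limsup$ in place of $\sup$. The key structural fact — going back to the analysis of Freiman, Flahive, and developed in the second author's work and in \cite{MaMo1}, \cite{MaMo2} — is that $m \in M\setminus L$ forces the sequence $\underline{a}$ realizing $m$ to be quite rigid: the maximum is attained essentially at a single position (up to finitely many), and on both sides the sequence must decay, in a precise combinatorial sense, away from configurations that would push the value up again.

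**The main steps.**
First, I would fix a threshold $t$ (here $t$ slightly below $\sqrt{12}$, or wherever the bulk of $M\setminus L$ above $3$ sits) and split $M\setminus L = ((M\setminus L)\cap(-\infty,t]) \cup ((M\setminus L)\cap(t,\infty))$. Below $t$ we are in a regime where the continued-fraction digits are bounded by a small constant (say $a_i \le 2$ or $a_i\le 3$), so the relevant dynamics is a subshift of finite type on a small alphabet, and $HD$ is controlled by the top eigenvalue of an associated transfer operator / by the pressure equation $P(-s\log|(G^n)'|) = 0$ for the Gauss-like map $G$. Second — and this is the crux — I would show that the sequences coding $M\setminus L$ cannot use the full subshift: the "single maximum, two-sided decay" constraint removes a definite proportion of admissible words of each length $N$, i.e. it confines $M\setminus L$ (near any given height) to a sub-subshift whose entropy/dimension is strictly smaller than that of the full Markov spectrum at that height. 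Concretely one exhibits, for a well-chosen window length $N$, a forbidden word $w$ (of length $N$) such that any sequence realizing an element of $M\setminus L$ in the relevant range must avoid $w$ in a neighborhood of its maximizing position — because the presence of $w$ on one side would create a second competing maximum (violating isolation of the $\sup$) or would be "cancellable" by a perturbation that lands in $L$. Passing from "avoids $w$ near the max" to a genuine dimension bound requires a renewal/Markov argument: the contribution of the two one-sided tails to $HD$ is each at most $HD$ of the one-sided shift avoiding $w$, and since avoiding a single word of length $N$ drops the top transfer-operator eigenvalue by an explicit (if tiny) amount, one gets $HD < 1 - \delta$ with an effective $\delta$. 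Third, for the part above $t$ one uses the known thin structure there (the complement is contained in explicitly described small Cantor sets, or is even countable in the ranges studied by Freiman–Flahive), so its dimension is bounded by a separate, easier estimate, and one takes the max of the two bounds.

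**Where the difficulty lies.**
The routine part is the transfer-operator / pressure computation once the combinatorial constraint is in hand: bounding $HD$ of a subshift avoiding a given finite set of words is standard (thermodynamic formalism for the Gauss map, or direct covering arguments), and getting the numerical value $0.986927$ is then a finite — if delicate — eigenvalue estimate, presumably done with interval arithmetic. The genuinely hard step is the combinatorial lemma that \emph{identifies} an explicit forbidden configuration valid for \emph{all} of $M\setminus L$ (not just near $3.11$, $3.29$, or $3.7$): one must argue that any bi-infinite sequence whose $\sup$ but not $\limsup$ realizes its Markov value is forced, near its maximizing position, to avoid some word $w$ of controlled length — and the length of $w$ directly controls how close to $1$ the final bound is, so there is real tension between provability and sharpness. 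I expect the proof to isolate such a $w$ by a careful case analysis of how a second near-maximum could be produced (the "transition from $M$ to $L$" arguments, i.e. perturbing the bi-infinite word to duplicate its maximizing block and thereby putting the value into $L$), making quantitative the heuristic that "$M\setminus L$ members are coded by sequences with a unique, non-recurrent peak." The secondary obstacle is bookkeeping the finitely many sporadic high-digit configurations so that the small-alphabet reduction below $t$ is actually exhaustive.
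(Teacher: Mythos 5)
There is a genuine gap in your plan, and it is structural, not a matter of bookkeeping. You propose that a sequence realizing $m\in M\setminus L$ must, near its maximizing position, avoid a fixed word $w$ on \emph{both} sides, and that this drops the dimension below $1$. This does not work: a sequence $\underline a\in\{1,2,3\}^{\mathbb Z}$ splits its Markov value as a sum of two one-sided continued fractions each lying in a Cantor set of dimension $HD(K(\{1,2,3\}))\approx 0.706$, so the two-sided dimension is already saturated at $1$ and removing a single short forbidden word from \emph{both} tails cannot push the one-sided dimension below $1/2$. (This is precisely why $HD(L\cap[\sqrt5,\sqrt{12}])=HD(M\cap[\sqrt5,\sqrt{12}])=1$.) The paper's argument has an essentially \emph{asymmetric} structure: the key Lemma~\ref{l.key} shows that if both the forward and backward tails can be completed infinitely often so as to reach a transitive, symmetric sub-subshift $\Sigma(B)\subset\Sigma(C)$, then transitivity of $\Sigma(B)$ lets one close up periodic orbits (a shadowing/specification mechanism), hence $m\in L$. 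The contrapositive forces at least \emph{one} tail to stay in the ``gaps'' of $\Sigma(B)$ eventually, while the other tail is completely free. This gives $(M\setminus L)\cap I\subset K(C)+K$ where $K(C)$ is the full Cantor set (dimension $\approx 0.53$ to $0.79$ depending on $I$) and $K$ is a very thin ``gap'' Cantor set (dimension $\approx 0.17$ to $0.28$), so the sum has dimension strictly below $1$. Your heuristic (``second competing maximum'' / single forbidden word) is closer in spirit to the local-uniqueness arguments used for Theorem~\ref{t.1}, not to the global mechanism of Theorem~\ref{t.prrova}.

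A second problem is the cutoff and the treatment of the upper range. You split at $t$ slightly below $\sqrt{12}$ and suggest that above $t$ one can appeal to ``known thin structure'' or countability from Freiman--Flahive. But Theorem~\ref{t.1} of this very paper shows $HD((M\setminus L)\cap(3.7,3.71))>0.53$, so the set is not thin above $\sqrt{12}$, and countability certainly fails. The paper handles the intervals $(\sqrt{13},3.84)$, $(3.84,\sqrt{20})$, $(\sqrt{20},\sqrt{21})$ by re-running the Lemma~\ref{l.key} argument with increasingly large alphabets $C$ and carefully chosen symmetric transitive blocks $B$ tailored to each range, and uses Freiman--Schecker ($[\sqrt{21},\infty)\subset L$) to truncate, together with the Cusick--Flahive bound $HD(M\cap(-\infty,\sqrt{10}))<0.93$ at the bottom. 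You would need to replace your ``above $t$ is easy'' step with this block-by-block application of the shadowing lemma, and you would need to change the constraint from ``avoid $w$ on both sides'' to ``one free side plus one gap-side'' in order to make the dimension arithmetic close.
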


\begin{remark} In Appendix \ref{s.prova}, we give empirical evidence towards the better estimate $HD(M\setminus L)<0.888$. 
\end{remark}

It follows that $M\setminus L$ has empty interior, and so, since $M$ and $L$ are closed subsets of $\mathbb{R}$, $\overline{int(M)}=\overline{int(L)}\subset L\subset M$. In particular, we have the following:

\begin{corollary}
$int(M)=int(L)$.
\end{corollary}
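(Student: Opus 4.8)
The plan is to deduce the corollary directly from Theorem \ref{t.prrova}; since the substantial work is already contained in that theorem, what remains is a short topological argument. First I would record the standard fact that a subset of $\mathbb{R}$ of Hausdorff dimension strictly less than $1$ has vanishing Lebesgue measure — on the line the Hausdorff measure $\mathcal{H}^1$ coincides with Lebesgue measure, so $HD(S)<1$ forces $\mathcal{H}^1(S)=0$ — and that such a set therefore has empty interior, because any nonempty open subset of $\mathbb{R}$ contains an interval and hence has positive length. Applying this to $S=M\setminus L$ together with Theorem \ref{t.prrova} (which gives $HD(M\setminus L)<0.986927<1$), I conclude that $M\setminus L$ has empty interior.

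Next I would establish the two inclusions. The inclusion $int(L)\subseteq int(M)$ is immediate from $L\subseteq M$. For the reverse inclusion, observe that $int(M)\setminus L = int(M)\cap(\mathbb{R}\setminus L)$ is open, being the intersection of the open set $int(M)$ with the complement of the closed set $L$; moreover it is contained in $M\setminus L$. By the previous step this open set is empty, hence $int(M)\subseteq L$, and since $int(M)$ is open this yields $int(M)\subseteq int(L)$. Combining the two inclusions gives $int(M)=int(L)$, which is exactly the assertion. (Equivalently, passing to closures one obtains the chain $\overline{int(M)}=\overline{int(L)}\subseteq L\subseteq M$ quoted in the text above.)

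There is no real obstacle here: every step after Theorem \ref{t.prrova} is elementary. The only point that deserves a moment's care is the passage from ``$HD(M\setminus L)<1$'' to ``$M\setminus L$ has empty interior'', which relies on the identification of $\mathcal{H}^1$ with Lebesgue measure on $\mathbb{R}$; everything else is a formal manipulation with open and closed sets.
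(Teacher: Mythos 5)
Your argument is correct and follows the same route the paper takes: Theorem \ref{t.prrova} gives $HD(M\setminus L)<1$, hence $M\setminus L$ has empty interior, and then a short topological manipulation (using that $L$ is closed and $L\subset M$) yields $int(M)\subseteq L$ and therefore $int(M)=int(L)$. You have merely spelled out the steps that the paper compresses into one sentence, including the standard fact that Hausdorff dimension $<1$ forces Lebesgue-null and hence empty interior.
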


As a consequence, we recover the fact, proved in \cite{Fr75}, that the biggest half-line contained in $M$ coincides with the biggest half-line $[c_F,\infty)$ contained in $L$.

Our main results show that $M\setminus L$ has an intricate structure and this motivates the following question\footnote{Despite the fact that we don't know any point of $L$ accumulated by points in $M\setminus L$.}. Consider the Lagrange spectrum $L$, and denote by $X$ the set obtained from $L$ by removing all non-trivial closed intervals contained on it and all of its isolated points. Is every point of $X$ accumulated by points in $M\setminus L$?

\begin{remark} We expect that the techniques in this article will be helpful in computing exactly the first decimal digit of $HD(M\setminus L)$. 
\end{remark}

Our approach to Theorems \ref{t.1} and \ref{t.prrova} is based on some qualitative \emph{dynamical} insights leading to a series of quantitative estimates with continued fractions. Before explaining this point, let us briefly recall the classical dynamical characterization of the Lagrange and Markov spectra due to Perron (see, e.g., \cite{CF} for more details). 

\subsection{Continued fractions, shift dynamics, and the Lagrange and Markov spectra} 

Given a sequence $a=(a_n)_{n\in\mathbb{Z}}\in(\mathbb{N}^*)^{\mathbb{Z}}$, we denote by $\lambda_i(a):=[a_i;a_{i+1}, a_{i+2},\dots] + [0;a_{i-1}, a_{i-2},\dots]$, where 
$$[c_0;c_1,c_2,\dots]:= c_0+\frac{1}{c_1+\frac{1}{c_2+\frac{1}{\ddots}}}$$
stands for the usual continued fraction development. 

The Markov value $m(a)$ of $a$ is $m(a)=\sup\limits_{n\in\mathbb{Z}} \lambda_n(a)$ and the Lagrange value $\ell(a)$ of $a$ is $\ell(a)=\limsup\limits_{n\to\infty} \lambda_n(a)$. In this setting, the Markov spectrum $M$ is the collection of all finite Markov values and the Lagrange spectrum $L$ is the collection of all finite Lagrange values.

From the point of view of Dynamical Systems, the previous paragraphs can be rewritten as follows. Let $\Sigma=(\mathbb{N}^*)^{\mathbb{Z}} = (\mathbb{N}^*)^{\mathbb{Z}^-} \times (\mathbb{N}^*)^{\mathbb{N}} = \Sigma^-\times \Sigma^+$ and $\pi^{\pm}:\Sigma\to\Sigma^{\pm}$ the natural projections.

Consider $\sigma$ the left-shift dynamics on $\Sigma$, and denote by $f:\Sigma\to\mathbb{R}$ the height function $f((b_n)_{n\in\mathbb{Z}}) := \lambda_0((b_n)_{n\in\mathbb{Z}}) = [b_0; b_1,\dots] + [0; b_{-1},\dots]$. 

The Markov value $m(\underline{b})$ of a sequence $\underline{b}\in\Sigma$ is $m(\underline{b})=\sup\limits_{n\in\mathbb{Z}} f(\sigma^n(\underline{b}))$. Similarly, the Lagrange value $\ell(\underline{b})$ of a sequence $\underline{b}\in\Sigma$ is $\ell(\underline{b})=\limsup\limits_{n\to\infty} f(\sigma^n(\underline{b}))$.

Therefore, we can think geometrically about $L$ and $M$ in terms of the heights of the orbits of a dynamical system $G$ on the plane $\mathbb{R}^2$. Indeed, the natural map $\Sigma^+\times\Sigma^-\to\mathbb{R}\times\mathbb{R}$ sending $(b_i)_{i\in\mathbb{Z}}$ to $([b_0;b_1,\dots], [0;b_{-1},\dots])$ allows to transfer the shift dynamics $\sigma:\Sigma\to\Sigma$ and the height function $f:\Sigma\to\mathbb{R}$ to the plane $\mathbb{R}^2$: in this way, $\sigma$ becomes a natural extension $G:(\mathbb{R}\setminus\mathbb{Q})\times (\mathbb{R}\setminus\mathbb{Q})\cap (0,1))\to (\mathbb{R}\setminus\mathbb{Q})\times (\mathbb{R}\setminus\mathbb{Q})\cap (0,1))$ of the so-called Gauss map and $f$ becomes $\widetilde{f}:\mathbb{R}^2\to\mathbb{R}$, $\widetilde{f}(x,y):=x+y$. As it is explained below, even though our proofs of Theorems \ref{t.1} and \ref{t.prrova} might superficially look a ``lucky'' concatenation of a series of lemmas about continued fractions, they are directly motivated by qualitative dynamical features of the orbits of $G$ with respect to the height function $\widetilde{f}$.    

\subsection{Ideas behind the proof of Theorem \ref{t.1}}

Our first source of inspiration to construct new elements in $M\setminus L$ is provided by Flahive paper \cite{F}. In this article, Flahive introduced the notion of \emph{semi-symmetric} words and she proved that an element of $M\setminus L$ is usually associated to \emph{non} semi-symmetric words. In particular, it is not surprising that Freiman's construction of elements in $M\L$ is related to the non semi-symmetric words (of odd lengths\footnote{We insist on non semi-symmetric words of \emph{odd} length because any modification of the associated infinite periodic sequence will force a definite increasing of the Markov value in one of two consecutive periods.})  $222211221$ and $2112221$, and our construction of new elements in $M\setminus L$ is based on the non semi-symmetric word (of odd length) $3322212$. 

Once we have chosen our preferred non semi-symmetric word $\alpha$ of odd length, we compute the Markov value $\ell$ of the periodic sequence $\dots\alpha\alpha\dots$, and we select a Cantor set $\Sigma_{\alpha}$ of sequences whose Markov values are $<\ell$.

Since $\alpha$ is \emph{not} semi-symmetric, the problems of gluing sequences in $\Sigma_{\alpha}$ on the left and/or on the right of $\dots\alpha\alpha\dots$ in such a way that the Markov value of the resulting sequence doesn't increase too much might have \emph{distinct} answers. In fact, if $\alpha$ decomposes as $\alpha=xy$, then the Markov values $\mu$ of $\dots\alpha\alpha z = \dots xyxyz$ with $z\in\Sigma_{\alpha}$ could be $\mu>\ell$ and \emph{systematically} smaller than the Markov values $\nu$ of $w\alpha\alpha\dots = wxyxy\dots$ with $w\in \Sigma_{\alpha}$ (because the gluings of $y$ and $z$ is a different problem from the gluings of $w$ and $x$). For example, if we try to glue the sequence $2121\dots\in\Sigma$ on the right of the periodic sequence $\dots\alpha\alpha\dots = \dots33222123322212\dots$  without increasing too much the Markov value of the resulting sequence, we might go for 
$$\dots33222123322212212121\dots$$ 
whose Markov value $\mu$ is $3.70969985975\dots$ On the other hand, if we try to glue $2121\dots\in\Sigma_{\alpha}$ on the left of $\dots\alpha\alpha\dots = \dots33222123322212\dots$ without increasing too much the Markov value, the best choice is 
$$\dots212121221233222123322212\dots$$
whose Markov value $\nu$ is $3.70969985982\dots$

In other words, the cost of gluing any $w\in\Sigma$ and $\alpha\alpha\dots$ is always higher than the cost of the sequence $\dots\alpha\alpha z$. Hence, the Markov value $\mu$ of $\dots\alpha\alpha z$ is likely to belong to the complement of $L$ because any attempt to modify the left side of $\dots\alpha\alpha z$ to reproduce big chunks of this sequence (in order to show that $\mu\in L$) would fail since it ends up producing a subword close to the sequence $z\alpha\alpha\dots\alpha\alpha z$ whose Markov value would be $\nu>\mu$.  




The discussion of the previous four paragraphs can be qualitatively rephrased in dynamical terms as follows\footnote{In the sequel, we assume some familiarity with basic aspects of the standard theory of hyperbolic sets (and we recommend the book \cite{KH} for all necessary details).}. 

The periodic word $\dots\alpha\alpha\dots$ provides a periodic point $p_{\alpha}\in\mathbb{R}^2$ of $G$ such that $\ell=\widetilde{f}(p_{\alpha})=\max_{n\in\mathbb{Z}} \widetilde{f}(G^n(p_{\alpha}))$. Also, a classical result of Perron asserts that the Markov value of any sequence in the Cantor set $\Sigma_{\alpha}:=\{1,2\}^{\mathbb{Z}}\subset \Sigma$ is $\leq \sqrt{12}<\ell$ and, moreover, the periodic word $\dots 2121\dots\in\Sigma_{\alpha}$ provides a periodic point $p_{21}\in\mathbb{R}^2$ of $G$ with $\sqrt{12}=\widetilde{f}(p_{21})=\max_{n\in\mathbb{Z}}\widetilde{f}(G^n(p_{21}))$. 

The problems of gluing sequences in $\Sigma_{\alpha}$ on the left and right of $\dots\alpha\alpha\dots$ have a clear dynamical meaning: it amounts to study the intersections $W^u_{loc}(\Sigma_{\alpha})\cap W^s_{loc}(p_{\alpha})$ and $W^s_{loc}(\Sigma_{\alpha})\cap W^u_{loc}(p_{\alpha})$ between the local stable and unstable sets of $\Sigma_{\alpha}$ and $\dots\alpha\alpha\dots$. 

Geometrically, the fact that $p_{\alpha}$ comes from a non semi-symmetric word $\alpha$ of odd length suggests that the local stable and unstable manifolds of $p_{\alpha}$ intersect the invariant manifolds of the subset $\Lambda_{\alpha}\subset\mathbb{R}^2$ related to $\Sigma_{\alpha}$ at \emph{distinct} heights with respect to $\widetilde{f}(x,y)=x+y$. In fact, one can show that the height $\mu$ of the point $\{q_{\alpha}\}:=W^u_{loc}(p_{\alpha})\cap W^s_{loc}(p_{21})$ is \emph{strictly smaller} than the minimal height $\nu$ of \emph{any} point $r\in W^s_{loc}(p_{\alpha})\cap W^u_{loc}(\Lambda_{\alpha})$: this situation is depicted in Figure \ref{f.1} below and it is quantitatively described in Lemma \ref{l.replication} below. 

Moreover, the $G$-orbit of $q_{\alpha}$ is \emph{locally unique} in the sense that some portion of the $G$-orbit of any point $z\in\mathbb{R}^2$ with $\sup_{n\in\mathbb{Z}}\widetilde{f}(G^n(z))$ close to $\mu$ must stay close to the first few $G$-iterates of $q_{\alpha}$: the quantitative incarnation of this fact is given by Corollary \ref{c.7} below. 

In this context, we show that the Markov value $\mu$ doesn't belong to the Lagrange spectrum $L$ by combining the previous two paragraphs. More concretely, if $\mu\in L$, say $\mu=\limsup\limits_{n\to+\infty}\widetilde{f}(G^n(z))$ for some $z\in\mathbb{R}^2$, then the local uniqueness property would say that some portion $\{G^{n_0}(z),\dots, G^{n_0+m_0}(z)\}$ of the $G$-orbit of $z$ is close to the first few $G$-iterates $\{q_{\alpha}, G(q_{\alpha}), \dots, G^{m_0}(q_{\alpha})\}$, so that $G^{n_0+m_0}(z)$ is close to $\Lambda_{\alpha}$. On the other hand, the assumption that $\mu=\limsup\limits_{n\to+\infty}\widetilde{f}(G^n(z))$ and the local uniqueness property say that there exists an instant $n_1>n_0+m_0$ such that $G^{n_1}(z)$ is again close to $q_{\alpha}$. However, this is impossible because the iterates of $G^{n_0+m_0}(z)$ would follow $W^u_{loc}(\Lambda_{\alpha})$ in their way to reach $G^{n_1}(z)$ and we know that the smallest height of the intersection between $W^s_{loc}(q_{\alpha})$ and $W^u_{loc}(\Lambda_{\alpha})$ is $\nu>\mu$: see Figure \ref{f.1} below for an illustration of this argument. 

\begin{figure}[htb!]
\begin{center}
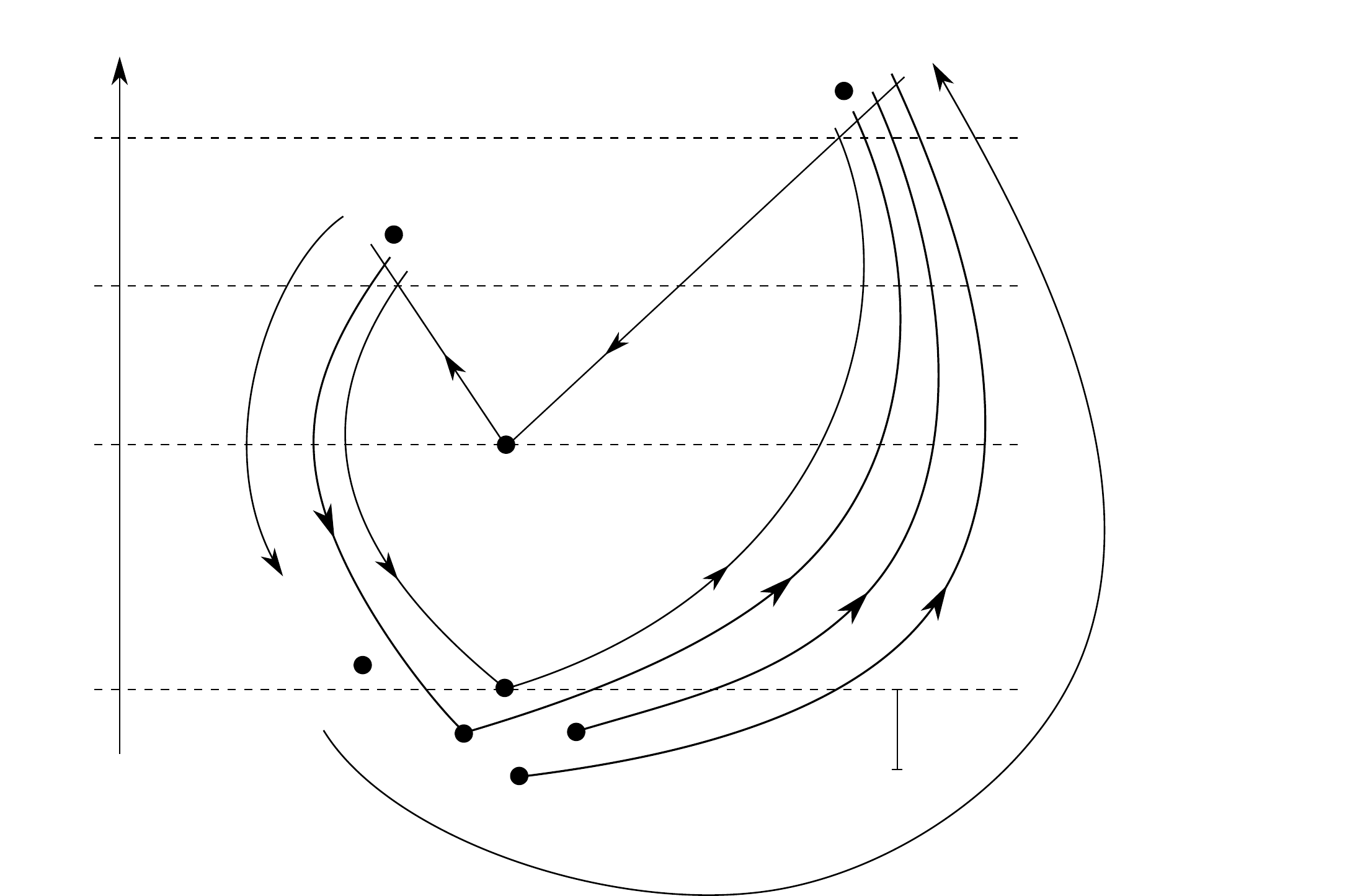\caption{Dynamics behind some elements in $M\setminus L$.}\label{f.1}
\end{center}
\end{figure}

\subsection{Ideas behind the proof of Theorem \ref{t.prrova}} 

Our proof of Theorem \ref{t.prrova} relies on the control of several portions of $M\setminus L$ in terms of the sum-set of a Cantor set associated to continued fraction expansions prescribed by a ``symmetric block'' and a Cantor set of irrational numbers whose continued fraction expansions live in the ``gaps'' of a ``symmetric block''. As it turns out, such a control is possible thanks to our key technical Lemma \ref{l.key} saying that a sufficiently large Markov value given by the sum of two continued fraction expansions systematically meeting a ``symmetric block'' must belong to the Lagrange spectrum. 

From a dynamical point of view, this argument can be qualitatively rephrased in the following way. For the sake of clarity, let us illustrate how our method\footnote{This is a non-trivial task because $HD(L\cap[\sqrt{5},\sqrt{12}])=HD(M\cap[\sqrt{5},\sqrt{12}])=1$ (cf. \cite{Mo}).} gives an estimate for $HD((M\setminus L)\cap [\sqrt{5},\sqrt{12}])$. It was shown by Hall that $HD(M\cap[\sqrt{5},\sqrt{10}])<0.93$. Hence, our task is reduced to show that $HD((M\setminus L)\cap[\sqrt{10},\sqrt{12}])<1$. Recall that Perron showed that any $\mu\in M\cap[\sqrt{5},\sqrt{12}]$ has the form $\mu=\sup_{n\in\mathbb{Z}}\widetilde{f}(G^n(z))$ for some $z\in\Lambda_{1,2}$, where $\Lambda_{1,2}\subset\mathbb{R}^2$ is the set of points related to $\{1,2\}^{\mathbb{Z}}\subset\Sigma$. Suppose that $\mu\in (M\setminus L)\cap [\sqrt{10},\sqrt{12}]$ is associated to a point $z\in\Lambda_{1,2}$ with $\mu=\widetilde{f}(z)=\sup_{n\in\mathbb{Z}} \widetilde{f}(G^n(z))$. The set of points $\Lambda_{11,22}\subset\mathbb{R}^2$ related to the sequences in $\{11,22\}^{\mathbb{Z}}\subset\Sigma$ is the geometric incarnation of a ``symmetric block'' in the sense that $\{11,22\}^{\mathbb{Z}}$ is a shift-invariant, locally maximal, \emph{transitive} set. In this situation, the $G$-orbit of $z$ can't accumulate on $\Lambda_{11,22}$ in the past and in the future. In fact, if both the $\alpha$-limit and $\omega$-limit sets of $z$ intersect $\Lambda_{11,22}$ at two points $z_{-\infty}$ and $z_{+\infty}$, then the transitivity of $\Lambda_{11,22}$ would allow us to employ the \emph{shadowing lemma} to construct a $G$-orbit $\{G^n(w)\}_{n\in\mathbb{Z}}$ tracking certain periodic pseudo-orbits starting at $z$, reaching $z_{+\infty}$, going to $z_{-\infty}$ and coming back to $z$: this scenario is qualitatively described in Figure \ref{f.2} below and its quantitative incarnation is Lemma \ref{l.key} below. In particular, $\mu=\limsup_{n\to\infty}\widetilde{f}(G^n(w))\in L$, a contradiction. Therefore, if $\mu\in(M\setminus L)\cap[\sqrt{10}, \sqrt{12}]$, then the past or the future of the $G$-orbit of $z\in\Lambda_{1,2}$ can't approach $\Lambda_{11,22}$: in other words, $\{G^n(z)\}_{n\geq 0}$ or $\{G^n(z)\}_{n\leq 0}$ travels through $\Lambda_{1,2}$ while avoiding some neighborhood of $\Lambda_{11,22}$, i.e., $\{G^n(z)\}_{n\geq 0}$ or $\{G^n(z)\}_{n\leq 0}$ lives in the ``gaps'' of $\Lambda_{11,22}$ in $\Lambda_{1,2}$. 

\begin{figure}[htb!]
\begin{center}
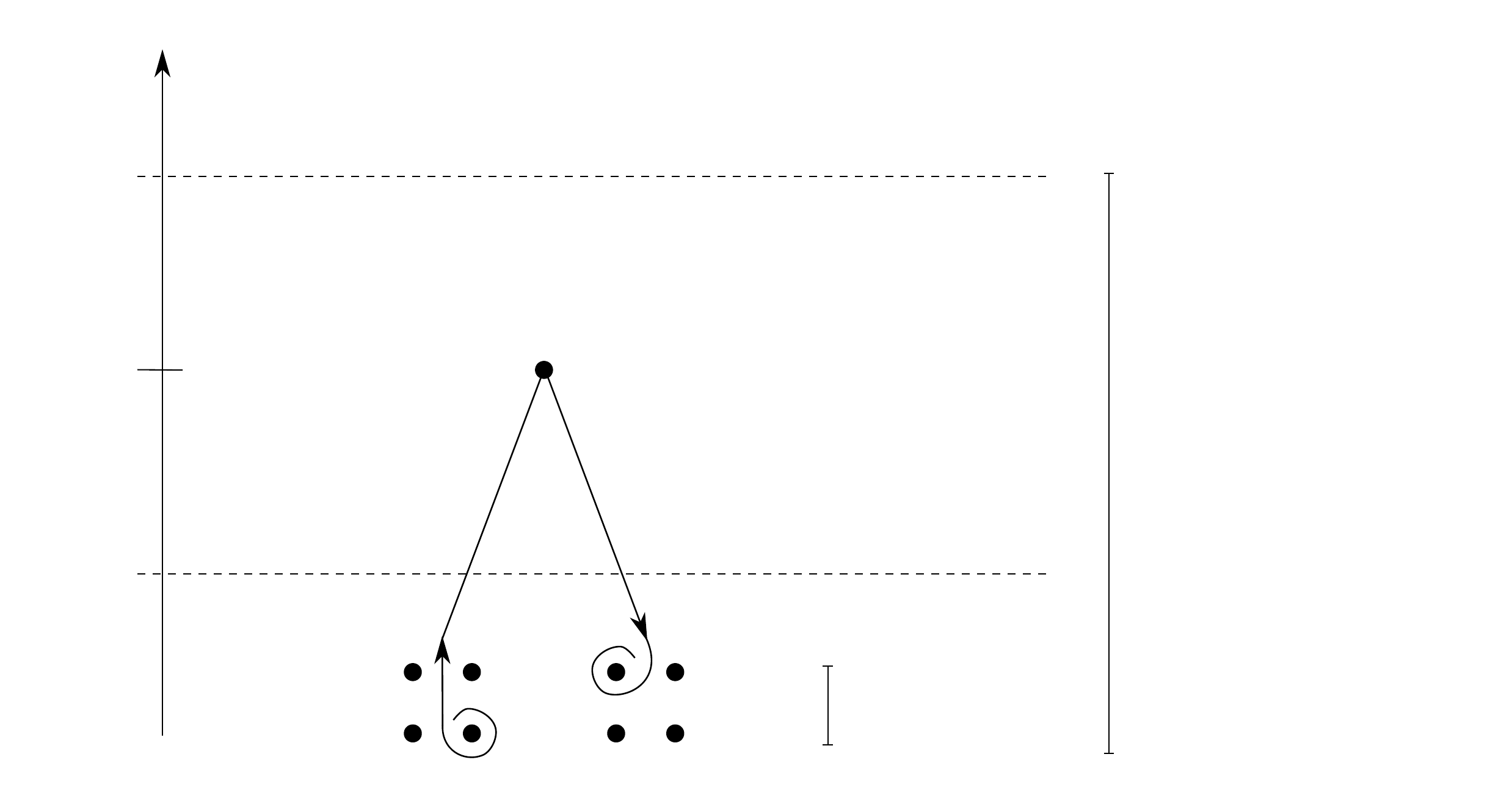\caption{Dynamical constraints on $M\setminus L$.}\label{f.2}
\end{center}
\end{figure}

An interesting feature of our method is its flexibility: we have some freedom in our choices of ``symmetric blocks''. Of course, there is a price to pay in this process: if one tries to refine the symmetric blocks to fit better the portions of $M\setminus L$, then one is obliged to estimate the Hausdorff dimension of Cantor sets of irrational numbers whose continued fraction expansions satisfy complicated restrictions. 

In our proof of Theorem \ref{t.prrova}, we chose the symmetric blocks in order to rely only on Cantor sets whose Hausdorff dimensions were \emph{rigorously} estimated by Hensley in \cite{He}. 

Nevertheless, one can get better \emph{heuristic} bounds for $HD(M\setminus L)$ thanks to the several methods in the literature to numerically approximate the Hausdorff dimension of Cantor sets of numbers with prescribed restrictions of their continued fraction expansions. By implementing the ``thermodynamical method'' introduced by Jenkinson--Pollicott in \cite{JP01}, we obtained the \emph{empirical} bound $HD(M\setminus L)<0.888$. 

\begin{remark} As it was observed by Jenkinson--Pollicott in \cite{JP16}, it is possible in principle to convert the heuristic estimates obtained with their methods into rigorous bounds. However, we will not pursue this direction here.  
\end{remark}

\subsection{Organization of the article} Closing this introduction, let us explain the organization of this paper. After recalling some basic definitions in Section \ref{s.preliminaries}, we dedicated the subsequent three section to the proof of Theorem \ref{t.1}. More precisely, we show in Section \ref{s.uniqueness} that any Markov value close to $3.70969985975025\dots$ can only be realized by a sequence containing the word 
$$\beta=2332221233222123322$$
In Section \ref{s.replication}, we show that a sequence $a$ containing $\beta$ whose Markov value is  $<3.70969985975033$ is necessarily periodic on the left, i.e., 
$$a=\overline{3322212}33222123322$$
(where $\overline{3322212}$ means an infinite concatenation of $3322212$). 

Then, we derive Theorem \ref{t.1} in Section \ref{s.results} as a consequence of a more precise result (cf. Theorem \ref{t.A} below) and the recent work of Jenkinson--Pollicott \cite{JP16}.    

Next, we devote the remainder of the text to the proof of Theorem \ref{t.prrova}. More concretely, we prove in Section \ref{s.key} our main technical result namely, Lemma \ref{l.key}. Once we dispose of this lemma in our toolbox, we employ it in Section \ref{s.prrova} to describe several portions of $M\setminus L$ (i.e., the intersections of $M\setminus L$ with the intervals $(\sqrt{10}, \sqrt{13})$, $(\sqrt{13}, 3.84)$, $(3.84, \sqrt{20})$ and $(\sqrt{20},\sqrt{21})$) as subsets of arithmetic sums of relatively explicit Cantor sets; in particular, this permits to establish Theorem \ref{t.prrova}. Finally, we show in Appendix \ref{s.prova} how a refinement of the discussion in Section \ref{s.prrova} can be combined with the Jenkinson--Pollicott algorithm to give the heuristic bound $HD(M\setminus L)<0.888$. 

\subsection*{Acknowledgments} We are thankful to Dmitry Gayfulin for his feedback on our proof of Theorem \ref{t.1}, and we are grateful to Pascal Hubert for his interest in this project. This article was partly written during a visit of the first author to IMPA (Brazil) sponsored by CAPES project 88887.136371/2017-00. The first author warmly thanks IMPA's staff for the hospitality and CAPES for the financial support. The second author is grateful to the financial support of FAPERJ and CNPq. 

\section{Some preliminaries}\label{s.preliminaries} 

Given a finite word $\gamma$, the sequence obtained by infinite concatenation of this word is denoted by $\overline{\gamma}$. 

In general, we will indicate the symbol $a_0$ at the zeroth position of a sequence $a=(a_n)_{n\in\mathbb{Z}}$ by an asterisk, i.e., $a=\dots a_{-2} a_{-1} a_0^* a_1 a_2 \dots$.  

An elementary result for comparing continued fractions is the following lemma\footnote{Compare with Lemmas 1 and 2 in Chapter 1 of Cusick-Flahive book \cite{CF}.}:

\begin{lemma}\label{l.0} Let $\alpha=[a_0; a_1,\dots, a_n, a_{n+1},\dots]$ and $\beta=[a_0; a_1,\dots, a_n, b_{n+1},\dots]$ with $a_{n+1}\neq b_{n+1}$. Then:
\begin{itemize}
\item $\alpha>\beta$ if and only if $(-1)^{n+1}(a_{n+1}-b_{n+1})>0$;
\item $|\alpha-\beta|<1/2^{n-1}$.
\end{itemize}
\end{lemma}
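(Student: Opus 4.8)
\textbf{Proof proposal for Lemma \ref{l.0}.}

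The plan is to reduce everything to the standard facts about convergents and then run an induction on $n$. Write $p_k/q_k$ for the convergents of $[a_0;a_1,\dots]$, which agree for $\alpha$ and $\beta$ up to index $n$ since the two continued fractions share the initial block $a_0,a_1,\dots,a_n$. The classical recursion gives $\alpha = \dfrac{p_n x + p_{n-1}}{q_n x + q_{n-1}}$ where $x = [a_{n+1};a_{n+2},\dots]>1$, and similarly $\beta = \dfrac{p_n y + p_{n-1}}{q_n y + q_{n-1}}$ with $y=[b_{n+1};b_{n+2},\dots]>1$. Subtracting and using $p_n q_{n-1}-p_{n-1}q_n = (-1)^{n-1}$, I get
\[
\alpha-\beta \;=\; \frac{(p_n q_{n-1}-p_{n-1}q_n)(x-y)}{(q_n x + q_{n-1})(q_n y + q_{n-1})} \;=\; \frac{(-1)^{n-1}(x-y)}{(q_n x + q_{n-1})(q_n y + q_{n-1})}.
\]
The denominator is positive, so the sign of $\alpha-\beta$ is the sign of $(-1)^{n-1}(x-y) = (-1)^{n+1}(x-y)$.

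For the first bullet I then need that $x-y$ has the same sign as $a_{n+1}-b_{n+1}$. If $a_{n+1}\neq b_{n+1}$, say $a_{n+1}>b_{n+1}$, then since $x=a_{n+1}+[0;a_{n+2},\dots]$ lies in $(a_{n+1},a_{n+1}+1)$ and $y=b_{n+1}+[0;b_{n+2},\dots]$ lies in $(b_{n+1},b_{n+1}+1)$, and these two open intervals are disjoint (as $a_{n+1}\geq b_{n+1}+1$), we get $x>y$; symmetrically for the reverse inequality. This yields $\alpha>\beta \iff (-1)^{n+1}(x-y)>0 \iff (-1)^{n+1}(a_{n+1}-b_{n+1})>0$, which is the claim.

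For the second bullet, I estimate the denominator from below. Since $x,y>1$ and $q_n,q_{n-1}\geq 1$ with $q_n\geq q_{n-1}$, one has $q_n x + q_{n-1} > q_n + q_{n-1}\geq q_n$ and likewise for $y$; combined with $|x-y|<2$ (both lie in an interval of length strictly less than... actually in $[1,\infty)$, so I instead bound $|x-y|$ crudely by noting $x,y$ and their integer parts) — more cleanly, I use $q_n x + q_{n-1} \geq q_n x \geq q_n \cdot a_{n+1}$ and the analogous bound, together with $|x-y|\le \max(x,y) \le$ ... The cleanest route: from $\alpha - \beta = \dfrac{(-1)^{n-1}(x-y)}{(q_nx+q_{n-1})(q_ny+q_{n-1})}$, bound $|x-y| < x y$ (valid since $x,y>1$ gives $xy - |x-y| = xy - x + y > 0$ when $x>y$, etc.), so $|\alpha-\beta| < \dfrac{xy}{(q_nx+q_{n-1})(q_ny+q_{n-1})} < \dfrac{1}{q_n^2}$, and finally $q_n \geq F_{n+1} \geq 2^{(n-1)/2}$... which only gives $1/2^{n-1}$ after squaring the Fibonacci bound $q_n\ge 2^{(n-1)/2}$ for $n\ge 1$, with the small cases $n=0,1$ checked by hand. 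The main obstacle is getting the constant in the second bullet to come out as exactly $1/2^{n-1}$ rather than something weaker; this forces me to use a sharp lower bound $q_n \ge 2^{(n-1)/2}$ (provable by induction from $q_k = a_{k+1}q_{k-1}+q_{k-2}\ge q_{k-1}+q_{k-2}$, i.e. the Fibonacci lower bound) and to handle $n=0$ and $n=1$ separately, where the stated bound $1/2^{-1}=2$ and $1/2^0=1$ are easy since $|\alpha-\beta|<1$ always (both numbers share the integer part $a_0$, and for $n=1$ they share $a_0$ and the sign pattern forces them into an interval of length $<1$).
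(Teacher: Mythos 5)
The paper itself gives no proof of Lemma \ref{l.0}; it simply notes it is comparable to Lemmas 1 and 2 of Chapter 1 of Cusick--Flahive. Your argument is the standard one underlying those lemmas and it is correct: the fractional--linear expression
\[
\alpha-\beta=\frac{(-1)^{n-1}(x-y)}{(q_nx+q_{n-1})(q_ny+q_{n-1})}
\]
with $x=[a_{n+1};a_{n+2},\dots]>1$, $y=[b_{n+1};b_{n+2},\dots]>1$ gives the sign rule immediately (since the integer parts of $x,y$ differ, the tails in $(0,1)$ cannot overcome that gap), and the crude inequality $|x-y|<xy$ together with $(q_nx+q_{n-1})(q_ny+q_{n-1})>q_n^2xy$ yields $|\alpha-\beta|<1/q_n^2$.

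One small simplification: your worry about treating $n=0,1$ separately is unnecessary. The Fibonacci bound $q_n\geq 2^{(n-1)/2}$ holds for \emph{all} $n\geq 0$ with the usual convention $q_{-1}=0$, $q_0=1$: the base cases are $q_0=1\geq 2^{-1/2}$ and $q_1=a_1\geq 1=2^{0}$, and the step $q_n\geq q_{n-1}+q_{n-2}\geq 2^{(n-2)/2}+2^{(n-3)/2}=(1+\sqrt{2})\,2^{(n-3)/2}>2\cdot 2^{(n-3)/2}=2^{(n-1)/2}$ closes the induction. Squaring gives $q_n^2\geq 2^{n-1}$, hence $|\alpha-\beta|<1/q_n^2\leq 1/2^{n-1}$ uniformly, with no case split. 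Otherwise your proof is complete and matches the classical argument the paper is citing.
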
 

The reader is encouraged to consult the book \cite{CF} by Cusick and Flahive for more background on continued fractions and their relationship to the Lagrange and Markov spectra. 

In the next three sections, we concentrate on the proof of Theorem \ref{t.1}. 

\section{Local uniqueness of candidate sequences}\label{s.uniqueness}

In this entire section, we deal exclusively with sequences $a=(a_n)_{n\in\mathbb{Z}}\in\{1,2,3\}^{\mathbb{Z}}$.

\begin{lemma}\label{l.1} 
\begin{itemize}
\item[(i)] $\lambda_0(\dots 3^*1\dots)>3.822$
\item[(ii)] $\lambda_0(\dots 2 3^* 2\dots)>3.7165$
\item[(iii)] $\lambda_0(\dots 3 3^* 3\dots)<3.61279$ 
\end{itemize}
\end{lemma}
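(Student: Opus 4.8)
The plan is to estimate each of the three continued-fraction expressions $\lambda_0(a) = [a_0;a_1,a_2,\dots] + [0;a_{-1},a_{-2},\dots]$ by bounding, term by term, the two one-sided continued fractions that compose it, using only Lemma \ref{l.0} together with the elementary fact that on the set $\{1,2,3\}^{\mathbb N}$ the value of $[c_0;c_1,c_2,\dots]$ is squeezed between the values obtained by replacing the tail after a given index by the periodic words $\overline{3}$ and $\overline{1}$ (or, more sharply, $\overline{13}$ and $\overline{31}$ in the appropriate parities), since the symbols are bounded by $3$. For each item I would fix as many coordinates of $a$ as are forced by the hypothesis ($a_0$ and one neighbour, and in (ii)--(iii) the specified neighbours on both sides), and then push the remaining freedom to whichever extreme makes $\lambda_0$ smallest (for the lower bounds (i), (ii)) or largest (for the upper bound (iii)).

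The concrete steps are as follows. For (i): write $\lambda_0(\dots 3^* 1\dots) = [3; a_1, a_2,\dots] + [0; 1, a_{-2}, a_{-3},\dots]$; the first summand is at least $[3;\overline{3,1}]$ (worst case among $\{1,2,3\}$-tails, by Lemma \ref{l.0} applied repeatedly) and the second is at least $[0;1,\overline{3,1}]$ type bound; one checks $[3;\overline{1,3}]^{\text{-ish}} + [0;1,\overline{3,1}]^{\text{-ish}} > 3.822$ by a finite truncation of the relevant periodic continued fractions (using the second bullet of Lemma \ref{l.0} to certify that truncating at a controlled depth changes the value by less than the slack $3.8227 - 3.822$). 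For (ii): $\lambda_0(\dots 2\,3^*\,2\dots) = [3; 2, a_2, a_3,\dots] + [0; 2, a_{-2}, a_{-3},\dots]$; minimise each summand over $\{1,2,3\}$-tails after the fixed $2$, again with Lemma \ref{l.0} dictating the extremal tail, and evaluate a truncation to beat $3.7165$. For (iii): $\lambda_0(\dots 3\,3^*\,3\dots) = [3;3,a_2,\dots] + [0;3,a_{-2},\dots]$; now maximise each summand over $\{1,2,3\}$-tails (the extremal tail is the one Lemma \ref{l.0} selects for the $>$ direction), getting $[3;3,\overline{\text{ext}}] + [0;3,\overline{\text{ext}}] < 3.61279$ after truncation.

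The only real work is bookkeeping: identifying, via the parity rule in Lemma \ref{l.0}, exactly which infinite tail in $\{1,2,3\}^{\mathbb N}$ is extremal at each position, and then carrying enough digits in the truncated evaluation that the second bullet of Lemma \ref{l.0} (error $<1/2^{n-1}$) certifies the strict inequality against the stated numerical constant. I expect the main obstacle — such as it is — to be purely organisational: keeping the alternating-parity comparisons straight so that one is genuinely bounding $\lambda_0$ from the correct side at every coordinate, and choosing truncation depths uniformly large enough that all three certified inequalities go through; there is no conceptual difficulty, since the symbols are uniformly bounded and Lemma \ref{l.0} supplies both the direction and the rate of convergence. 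An alternative, even more mechanical, route is to note that each $\lambda_0$ here depends only on finitely many coordinates up to an error $<1/2^{n-1}$, so one may simply enumerate the finitely many relevant words of length $\sim 20$ over $\{1,2,3\}$, compute, and take the min/max; this avoids the parity discussion entirely at the cost of a (small) finite search.
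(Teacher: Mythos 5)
Your plan matches the paper's proof exactly: fix the forced coordinate(s), replace each remaining tail by the extremal periodic word ($\overline{1,3}$ or $\overline{3,1}$) dictated by the parity rule of Lemma \ref{l.0}, and evaluate the resulting quadratic surds. The paper obtains $\lambda_0(\dots 3^*1\dots)\geq[3;1,\overline{1,3}]+[0;\overline{3,1}]=3.8220\dots$, $\lambda_0(\dots 2 3^* 2\dots)\geq [3;2,\overline{1,3}]+[0;2,\overline{1,3}] = 3.7165\dots$, and $\lambda_0(\dots 3 3^* 3\dots)\leq [3;3,\overline{3,1}] + [0;3,\overline{3,1}]=3.6127\dots$; it evaluates these periodic continued fractions directly rather than via truncation, but that is an immaterial difference. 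Two bookkeeping slips in your sketch of (i) are worth noting: the forced $1$ sits at position $1$, not $-1$ (harmless, since $\lambda_0$ is invariant under transposition $a_i\leftrightarrow a_{-i}$), and the minimizing tail after the forced $1$ is $\overline{1,3}$, not $\overline{3,1}$ as you wrote, so $[0;1,\overline{3,1}]$ overshoots the minimum and is not a valid lower bound. These are exactly the parity headaches you flagged as the "only real work," and your mechanical fallback of enumerating finite words would indeed sidestep them.
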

\begin{proof}
$\lambda_0(\dots 3^*1\dots)\geq[3;1,\overline{1,3}]+[0;\overline{3,1}]=3.822020185\dots$, $\lambda_0(\dots 2 3^* 2\dots)\geq [3;2,\overline{1,3}]+[0;2,\overline{1,3}] = 3.7165151389911\dots$ and $\lambda_0(\dots 3 3^* 3\dots)\leq [3;3,\overline{3,1}] + [0;3,\overline{3,1}]=3.61278966\dots$.  
\end{proof}

An immediate corollary of this lemma is:

\begin{corollary}\label{c.1} If $3.62<\lambda_0(a)<3.71$, then $a=\dots 33^*2\dots$ up to transposition. 
\end{corollary}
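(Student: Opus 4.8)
The plan is to pin down the three central entries $a_{-1}a_0^*a_1$ of $a$ directly from Lemma~\ref{l.1}. Since $\lambda_0(a)=a_0+[0;a_1,a_2,\dots]+[0;a_{-1},a_{-2},\dots]$ is unchanged under the transposition $a\mapsto(a_{-n})_{n\in\mathbb{Z}}$, it suffices to show that $a_0=3$ and that $\{a_{-1},a_1\}=\{2,3\}$; the assertion ``up to transposition'' then follows, since the two orderings $(a_{-1},a_0,a_1)=(3,3,2)$ and $(2,3,3)$ are transposes of one another.

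First I would rule out $a_0\in\{1,2\}$, which is the one point not covered by Lemma~\ref{l.1} (which tacitly assumes $a_0=3$). If $a_0=1$, then $\lambda_0(a)=1+[0;a_1,\dots]+[0;a_{-1},\dots]<1+1+1=3<3.62$. If $a_0=2$, then, since any continued fraction $[0;b_1,b_2,\dots]$ with $b_i\in\{1,2,3\}$ is maximized at $[0;\overline{1,3}]=\frac{\sqrt{21}-3}{2}<0.792$ by the comparison rule of Lemma~\ref{l.0} (pushing odd-indexed digits down and even-indexed digits up), we get $\lambda_0(a)=2+[0;a_1,\dots]+[0;a_{-1},\dots]<2+2\cdot 0.792<3.59<3.62$. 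Both contradict $\lambda_0(a)>3.62$, so $a_0=3$.

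With $a_0=3$ fixed, I would eliminate the remaining possibilities one at a time. If $a_1=1$ (or, by transposition, $a_{-1}=1$), Lemma~\ref{l.1}(i) gives $\lambda_0(a)>3.822>3.71$, contradicting $\lambda_0(a)<3.71$; hence $a_{-1},a_1\in\{2,3\}$. If $a_{-1}=a_1=3$, Lemma~\ref{l.1}(iii) gives $\lambda_0(a)<3.61279<3.62$, a contradiction; if $a_{-1}=a_1=2$, Lemma~\ref{l.1}(ii) gives $\lambda_0(a)>3.7165>3.71$, again a contradiction. The only surviving case is $\{a_{-1},a_1\}=\{2,3\}$, i.e. $a=\dots 3\,3^*2\dots$ up to transposition.

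The proof is essentially bookkeeping of the four inequalities in Lemma~\ref{l.1}; the only step needing independent input is the reduction to $a_0=3$, and there the crude estimates above (together with the elementary extremal property of continued fractions recorded in Lemma~\ref{l.0}) suffice, so I do not anticipate any genuine obstacle.
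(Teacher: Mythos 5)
Your proof is correct and takes essentially the approach the paper implicitly intends when it calls the corollary ``immediate'': you exhaust the cases for $(a_{-1},a_0^*,a_1)$ using items (i), (ii), (iii) of Lemma~\ref{l.1}. You additionally spell out the easy reduction to $a_0=3$ via the extremal continued fraction $[0;\overline{1,3}]$, a step the paper leaves tacit; this is harmless and makes the argument self-contained.
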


\begin{lemma}\label{l.2}  
\begin{itemize}
\item[(iv)] $\lambda_0(\dots 33^*21\dots)<3.6973$
\item[(v)] $\lambda_0(\dots 3 3^* 23\dots)>3.72$ or $\lambda_{-1}(\dots 33^*23\dots)> 3.822$
\end{itemize}
\end{lemma}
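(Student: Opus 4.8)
The plan is to read bounds for $\lambda_0$ and $\lambda_{-1}$ straight off the formula $\lambda_i(a)=[a_i;a_{i+1},\dots]+[0;a_{i-1},a_{i-2},\dots]$, in each case replacing a one-sided continued fraction having free entries by its extremal value, which by Lemma~\ref{l.0} is obtained by completing with the periodic tails $\overline{1,3}$ or $\overline{3,1}$ (so that, e.g., $[\overline{1,3}]=\tfrac{3+\sqrt{21}}{6}$ and $[\overline{3,1}]=\tfrac{3+\sqrt{21}}{2}$). For item (iv), the pattern $\dots 33^*21\dots$ fixes $a_{-1}=3,\,a_0=3,\,a_1=2,\,a_2=1$ and leaves $a_3,a_4,\dots$ and $a_{-2},a_{-3},\dots$ free in $\{1,2,3\}$; Lemma~\ref{l.0} gives $[3;2,1,a_3,\dots]\leq[3;2,1,\overline{1,3}]$ and $[0;3,a_{-2},\dots]\leq[0;3,\overline{3,1}]$, hence
\[
\lambda_0(a)\leq[3;2,1,\overline{1,3}]+[0;3,\overline{3,1}]=\frac{147-\sqrt{21}}{42}+\frac{15-\sqrt{21}}{34}=\frac{1407-19\sqrt{21}}{357}<3.6973 .
\]
Since this gap is only of order $10^{-5}$, the one point to watch is to keep the estimates exact rather than round crudely.

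For item (v), the pattern $\dots 33^*23\dots$ fixes $a_{-1}=3,\,a_0=3,\,a_1=2,\,a_2=3$, with $a_3,a_4,\dots$ and $a_{-2},a_{-3},\dots$ free. I would argue by dichotomy: if $\lambda_0(a)>3.72$ there is nothing to prove, so assume $\lambda_0(a)\leq 3.72$; the crux is that this already forces $a_{-2}=1$. Indeed, Lemma~\ref{l.0} yields $[3;2,3,a_3,\dots]\geq[3;2,3,\overline{3,1}]=\tfrac{689+\sqrt{21}}{202}$, while if $a_{-2}\geq 2$ then, using $[a_{-2};a_{-3},\dots]\geq[2;\overline{3,1}]=\tfrac{9+\sqrt{21}}{6}$, one gets $[0;3,a_{-2},a_{-3},\dots]\geq\tfrac{39+\sqrt{21}}{150}$, and adding the two contradicts $\lambda_0(a)\leq 3.72$ because $\tfrac{689+\sqrt{21}}{202}+\tfrac{39+\sqrt{21}}{150}>3.724$. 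With $a_{-2}=1$ in hand, I would then bound each term of $\lambda_{-1}$ from below by its extremal value:
\[
\lambda_{-1}(a)=[3;3,2,3,a_3,\dots]+[0;1,a_{-3},\dots]\geq[3;3,2,3,\overline{1,3}]+[0;1,\overline{1,3}]=\frac{489+\sqrt{21}}{150}+\frac{1+\sqrt{21}}{10}=\frac{252+8\sqrt{21}}{75}>3.822 ,
\]
which is exactly the second alternative in item (v).

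The step I expect to be the main obstacle is the dichotomy in item (v): one must notice that the rather weak inequality $\lambda_0(a)\leq 3.72$ is already stiff enough to determine the single digit $a_{-2}$, and that once $a_{-2}=1$ is pinned down the value $\lambda_{-1}(a)$ leaps comfortably past $3.822$. Item (iv) is a plain one-sided upper bound, routine except for the narrowness of the numerical margin. In both items the only structural ingredient is the reduction, via Lemma~\ref{l.0}, of one-sided continued fractions to their extremal $\overline{1,3}$ or $\overline{3,1}$ completions; everything else is elementary continued-fraction arithmetic (e.g.\ $[2;\overline{3,1}]=\tfrac{9+\sqrt{21}}{6}$, $[3;2,3,\overline{3,1}]=\tfrac{689+\sqrt{21}}{202}$, $[3;3,2,3,\overline{1,3}]=\tfrac{489+\sqrt{21}}{150}$, and so on).
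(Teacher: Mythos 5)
Your proposal is correct and follows essentially the same approach as the paper: item (iv) is the same one-sided extremal bound, and item (v) uses the same dichotomy on $a_{-2}$, with the lower bound $[3;2,3,\overline{3,1}]+[0;3,2,\overline{3,1}]=\tfrac{689+\sqrt{21}}{202}+\tfrac{39+\sqrt{21}}{150}>3.72$ for the case $a_{-2}\geq 2$. The only (harmless) cosmetic difference is that where the paper invokes Lemma~\ref{l.1}~(i) to get $\lambda_{-1}>3.822$ when $a_{-2}=1$, you recompute a sharper direct lower bound $[3;3,2,3,\overline{1,3}]+[0;1,\overline{1,3}]=\tfrac{252+8\sqrt{21}}{75}\approx3.849$, which of course also exceeds $3.822$.
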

\begin{proof}  $\lambda_0(\dots 33^*21\dots)\leq [3;2,1,\overline{1,3}]+[0;3,\overline{3,1}]=3.6972\dots$, and, by Lemma \ref{l.1} (i),  either  $\lambda_{-1}(\dots 33^*23\dots)> 3.822$ or $\lambda_0(\dots 3 3^* 23\dots)\geq [3;2,3,\overline{3,1}]+[0;3,2,\overline{3,1}] = 3.72\dots$.
\end{proof}

\begin{corollary}\label{c.2} If $3.698<\lambda_0(a)<3.71$ and $\lambda_i(a)<3.71$ for $|i|\leq 1$, then $a=\dots 33^*22\dots$ up to transposition.
\end{corollary}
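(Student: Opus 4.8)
The plan is to start from Corollary \ref{c.1}, which already gives $a = \dots 3 3^* 2 \dots$ up to transposition under the weaker hypothesis $3.62 < \lambda_0(a) < 3.71$. Since our hypothesis here is stronger ($3.698 < \lambda_0(a) < 3.71$, plus control on $\lambda_{\pm 1}(a)$), we may assume after transposition that $a = \dots 3 3^* 2 \dots$, i.e. $a_{-1} = 3$, $a_0 = 3$, $a_1 = 2$; it remains to show $a_2 = 2$. The only other possibility in $\{1,2,3\}^{\mathbb Z}$ is $a_2 \in \{1, 3\}$, so I would rule out each of these two cases separately.

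First I would dispose of $a_2 = 1$: then $a = \dots 3 3^* 2 1 \dots$, and Lemma \ref{l.2}(iv) gives $\lambda_0(a) \leq \lambda_0(\dots 3 3^* 21\dots) < 3.6973 < 3.698$, contradicting the hypothesis $\lambda_0(a) > 3.698$. (Strictly, Lemma \ref{l.2}(iv) is an upper bound over all completions of the pattern $\dots 3 3^* 2 1 \dots$, which is exactly what I need.) Next I would dispose of $a_2 = 3$: then $a = \dots 3 3^* 2 3 \dots$, and Lemma \ref{l.2}(v) says that either $\lambda_0(a) > 3.72$ or $\lambda_{-1}(a) > 3.822$. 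The first alternative contradicts $\lambda_0(a) < 3.71$; the second contradicts $\lambda_{-1}(a) = \lambda_i(a) < 3.71$ for $i = -1$, which is part of the hypothesis (this is precisely why the hypothesis on $\lambda_i$ for $|i| \le 1$ is imposed). Hence $a_2 = 2$, so $a = \dots 3 3^* 2 2 \dots$ up to transposition, as claimed.

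I expect there to be essentially no obstacle here: the corollary is a routine bookkeeping consequence of Corollary \ref{c.1} and Lemma \ref{l.2}, with the two extra hypotheses on $\lambda_0$ and on $\lambda_{\pm 1}$ tailored exactly to kill the two bad cases $a_2 = 1$ and $a_2 = 3$ via parts (iv) and (v) respectively. The only mild subtlety to double-check is the role of transposition: the pattern $\dots 3 3^* 2 \dots$ is not symmetric, so one must make sure that applying Lemma \ref{l.2} to the relevant one-sided completion (and to $\lambda_{-1}$ rather than $\lambda_1$) is legitimate, but since transposition reverses the sequence and correspondingly swaps the roles of positive and negative indices while preserving each $\lambda_i$ value up to sign of the index, the hypothesis $\lambda_i(a) < 3.71$ for $|i| \le 1$ is transposition-invariant and everything goes through.
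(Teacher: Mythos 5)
Your proof is correct and matches the argument the paper clearly intends (the paper states Corollary \ref{c.2} without an explicit proof, presenting it as an immediate consequence of Corollary \ref{c.1} and Lemma \ref{l.2}). You use Corollary \ref{c.1} to reduce to $a=\dots 33^*2\dots$, then items (iv) and (v) of Lemma \ref{l.2} to eliminate $a_2=1$ and $a_2=3$ respectively, exactly as the surrounding lemmas are designed to do; the remark on transposition-invariance of the hypothesis is a reasonable check and does not introduce any issue.
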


\begin{lemma}\label{l.3} 
\begin{itemize}
\item[(vi)] $\lambda_0(\dots 333^*22\dots)>3.71$
\item[(vii)] $\lambda_i(\dots 23 3^* 221\dots) > 3.7099$ for some $i\in\{-3,0,5\}$
\item[(viii)] $\lambda_0(\dots 233^*223\dots)< 3.7087$
\item[(ix)] $\lambda_0(\dots 3233^*222\dots)\leq \lambda_0(\dots 2233^*222\dots) < 3.7084$
\end{itemize}
\end{lemma}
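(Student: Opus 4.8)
The statement (items (vi)--(ix) of Lemma~\ref{l.3}) is, like Lemmas~\ref{l.1} and~\ref{l.2}, a list of inequalities on the heights $\lambda_i(a)$ for sequences $a\in\{1,2,3\}^{\mathbb Z}$ forced to contain a prescribed finite block around a given position. The uniform strategy is to use Lemma~\ref{l.0}: once finitely many entries of $a$ around position~$i$ are pinned down, the value $\lambda_i(a)=[a_i;a_{i+1},\dots]+[0;a_{i-1},a_{i-2},\dots]$ is determined up to an error that decays like $2^{-(n-1)}$, and the sign of the deviation is controlled by the parity of the first free position. So for each item one picks the continuation of the fixed block (on each side) that \emph{minimizes} $\lambda_i$ when a lower bound is wanted, or \emph{maximizes} it when an upper bound is wanted, evaluates the resulting eventually-periodic continued fraction exactly, and checks the numerical inequality with enough digits to beat the $2^{-(n-1)}$ tail.

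\textbf{Item-by-item.} For (vi) I would write $\lambda_0(\dots 3 3 3^* 2 2\dots)\ge [3;2,2,\dots]+[0;3,3,\dots]$ and choose, among words in $\{1,2,3\}$, the continuations that make each bracket as small as possible while respecting the already-fixed digits; since $[3;2,2,\overline{1,3}]$-type tails are the extremal ones here, the minimal value over all admissible $a$ will still exceed $3.71$, giving (vi). For (vii), the point is that \emph{no} admissible continuation of $\dots 2 3 3^* 2 2 1\dots$ can keep all three of $\lambda_{-3},\lambda_0,\lambda_5$ below $3.7099$; I would argue by cases on the digits just outside the block $2332^*2 2 1$ (the digit in position $-4$ and in position $6$) and in each case exhibit one of the three indices $i\in\{-3,0,5\}$ at which the corresponding bracket sum, after fixing enough further digits, is $>3.7099$ — this is really three sub-lemmas of the same flavour as Lemma~\ref{l.2}(v). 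For (viii) and (ix) the inequalities are upper bounds, so I fix the displayed block ($2 3 3^* 2 2 3$, resp.\ $3 2 3 3^* 2 2 2$ and $2 2 3 3^* 2 2 2$) and bound $\lambda_0$ above by replacing the two free tails by the continuations maximizing each bracket; for (ix) the first inequality $\lambda_0(\dots 3 2 3 3^* 2 2 2\dots)\le\lambda_0(\dots 2 2 3 3^* 2 2 2\dots)$ is just Lemma~\ref{l.0}'s monotonicity applied at position $-2$ (replacing $32$ by $22$ on the left lowers or raises the back-bracket $[0;3,2,3,\dots]$ versus $[0;3,2,2,\dots]$ in the direction making it larger — one checks the parity), and then one bounds the larger one.

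\textbf{Main obstacle.} Nothing here is conceptually deep; the only real care is \emph{bookkeeping of parities and of the error term} in Lemma~\ref{l.0}. Because the target gaps are tiny (e.g.\ distinguishing $3.7099$, $3.7087$, $3.7084$ from $3.71$), one must fix enough digits in each extremal continuation so that the $2^{-(n-1)}$ tail is comfortably smaller than the margin, and one must get right, for each free position, whether pushing a digit up or down increases or decreases $\lambda_i$ (this alternates with the position's parity relative to the asterisk). The genuinely non-routine item is (vii): one has to organize the case analysis on the neighbouring digits so that the three candidate indices $\{-3,0,5\}$ really do cover every case, and I expect that is where the argument takes the most thought, whereas (vi), (viii), (ix) are direct one-line estimates once the extremal continuations are identified.
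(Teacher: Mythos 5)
Your proposal is correct and matches the paper's proof: each item is handled exactly as you describe, by fixing the displayed block and bounding $\lambda_0$ (or $\lambda_i$) via Lemma~\ref{l.0} with extremal eventually-periodic tails (of the form $\overline{1,3}$ or $\overline{3,1}$), and item~(vii) is done precisely by the case split you outline, with Lemma~\ref{l.1}(i),(ii) giving the bound at $i=5$ and $i=-3$ and the direct estimate $[3;2,2,1,1,2,\overline{1,3}]+[0;3,2,3,3,\overline{3,1}]=3.7099028\dots$ covering the remaining case at $i=0$. (One small bookkeeping slip: the free digits bordering the block $2\,3\,3^*\,2\,2\,1$ sit at positions $-3$ and $4$, not $-4$ and $6$.)
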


\begin{proof} $\lambda_0(\dots 333^*22\dots)\geq [3;2,2,\overline{3,1}]+[0;3,3,\overline{3,1}] = 3.71\dots$. 

By Lemma \ref{l.1} (i), (ii), either $\lambda_5(\dots 23 3^* 221\dots)>3.822$, or $\lambda_{-3}(\dots 23 3^* 221\dots)>3.71$, or $\lambda_0(\dots 23 3^* 221\dots)\geq [3;2,2,1,1,2,\overline{1,3}]+ [0;3,2,3,3,\overline{3,1}] = 3.7099028\dots$

$\lambda_0(\dots 233^*223\dots)\leq [3;2,2,3,\overline{3,1}]+[0;3,2,\overline{1,3}] = 3.708691\dots$

$\lambda_0(\dots 3233^*222\dots)\leq \lambda_0(\dots 2233^*222\dots)\leq [3;2,2,2,\overline{3,1}]+[0;3,2,2,\overline{3,1}] < 3.7083107\dots$
\end{proof}

\begin{corollary}\label{c.3}  If $3.7087<\lambda_0(a)<3.7099$ and $\lambda_i(a)<3.7099$ for $|i|\leq 5$, then $a=\dots 1233^*222\dots$ up to transposition.
\end{corollary}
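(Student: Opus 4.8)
The plan is to bootstrap from Corollary \ref{c.2}, which already pins down $a=\dots33^*22\dots$ up to transposition, and to enlarge the known central block one symbol at a time by ruling out every competing extension with the help of Lemmas \ref{l.1}, \ref{l.2}, and \ref{l.3}. First I would record that the hypothesis $\lambda_i(a)<3.7099$ for $|i|\le 5$ forces, in particular, $\lambda_i(a)<3.71$ for $|i|\le 1$, so Corollary \ref{c.2} applies and (after possibly transposing, which I fix once and for all) we may assume $a=\dots33^*22\dots$. The symbol immediately to the left of the central $3$ cannot be $1$: by Lemma \ref{l.1}(i), $\lambda_{-1}(\dots13^*3\dots)>3.822>3.7099$, contradicting the hypothesis applied at $i=-1$. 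It cannot be $3$ either: $\dots33^*3\dots$ is excluded since we already have $\dots33^*22\dots$ and the symbol at position $1$ is $2$, not $3$ — wait, I must be careful about which position I am extending. Let me instead extend to the right first.

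So, continuing to the right: we know $a=\dots33^*22\dots$. The symbol at position $3$ (the one after the block $3^*22$) cannot be $3$, because Lemma \ref{l.3}(viii) gives $\lambda_0(\dots233^*223\dots)<3.7087<3.7087$... more precisely, if that symbol were $3$ I would want a lower bound forcing $\lambda$ too large or an upper bound forcing it below $3.7087$; Lemma \ref{l.3}(viii) shows $\lambda_0(\dots233^*223\dots)<3.7087$, and since we will separately establish the symbol at position $-1$ is $2$, we get $a=\dots233^*223\dots$ hence $\lambda_0(a)<3.7087$, contradicting $\lambda_0(a)>3.7087$. Thus the symbol at position $3$ is $2$, i.e. $a=\dots33^*222\dots$. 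Symmetrically — or rather, using the remaining parts — I handle position $-1$: it cannot be $1$ (Lemma \ref{l.1}(i) at $i=-1$ if it leads to $\dots13^*\dots$ — careful, position $-1$ being $1$ means $\dots13^*22\dots$, and $\lambda_{-1}(\dots*13^*\dots)$, hmm, I should use $\lambda_0$ of the shift). The cleaner route: by Lemma \ref{l.1}(iii) the symbol at position $-1$ is not $3$ (else $\dots33^*\dots$ with the next symbol... actually $\dots333^*22\dots$ is killed by Lemma \ref{l.3}(vi): $\lambda_0(\dots333^*22\dots)>3.71>3.7099$). And it is not $1$: $\dots133^*22\dots$ would, reading $\lambda_{-1}$, contain $\dots*13\dots$ giving a large value via Lemma \ref{l.1}(i). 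Hence position $-1$ is $2$, so $a=\dots233^*222\dots$.

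The final step is to determine position $-2$. If it is $3$, then $a=\dots3233^*222\dots$, and Lemma \ref{l.3}(ix) gives $\lambda_0(a)\le\lambda_0(\dots2233^*222\dots)<3.7084<3.7087$, contradicting $\lambda_0(a)>3.7087$. If it is $2$, then $a=\dots2233^*222\dots$, and again Lemma \ref{l.3}(ix) gives $\lambda_0(a)<3.7084$, the same contradiction. Therefore position $-2$ is $1$, and we conclude $a=\dots1233^*222\dots$ up to the initial transposition. Assembling these observations in the order: (apply Cor.\ \ref{c.2}) $\to$ (position $3$ via \ref{l.3}(viii)) $\to$ (position $-1$ via \ref{l.3}(vi) and \ref{l.1}(i)) $\to$ (position $-2$ via \ref{l.3}(ix)) yields the claim.

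The main obstacle I anticipate is bookkeeping the transposition (time-reversal) symmetry consistently: several of the lemmas are stated asymmetrically (e.g.\ \ref{l.3}(viii) and (ix) constrain $\lambda_0$ of a block extended more on one side than the other), so I must be careful that when I fix the transposition to get $\dots33^*22\dots$ from Corollary \ref{c.2}, the subsequent appeals to Lemma \ref{l.3}(vi)--(ix) are applied to the block in the correct orientation, and that the indices $i$ at which I invoke the hypothesis $\lambda_i(a)<3.7099$ indeed satisfy $|i|\le 5$ (the $i\in\{-3,0,5\}$ in \ref{l.3}(vii) is exactly why the range $|i|\le5$ appears in the statement). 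A secondary, purely arithmetic point is checking that the numerical thresholds chain correctly — $3.7087<\lambda_0(a)$ is what contradicts the upper bounds $<3.7087$ coming from \ref{l.3}(viii) and $<3.7084$ from \ref{l.3}(ix) — so no gap opens up between the hypotheses of this corollary and the conclusions of the cited lemmas.
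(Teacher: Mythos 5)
Your overall strategy — bootstrap from Corollary \ref{c.2}, then pin down one new symbol at a time using the bounds in Lemmas \ref{l.1} and \ref{l.3} — is the right one, and your treatment of $a_{-2}$ (ruled out $\ne 3$ by \ref{l.3}(vi), $\ne 1$ by \ref{l.1}(i)) and of $a_{-3}$ (ruled out $\in\{2,3\}$ by \ref{l.3}(ix)) is correct, modulo a systematic off-by-one in your labels (what you call ``position $-1$'' is $a_{-2}$, since Corollary \ref{c.2} already fixes $a_{-1}=3$, and what you call ``position $-2$'' is $a_{-3}$). That indexing slip is cosmetic; the argument is recoverable.

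There is, however, a genuine gap: you never exclude $a_3=1$. Starting from $\dots 33^*22\dots$, the candidate continuations at position $3$ are $1$, $2$, $3$. You rule out $a_3=3$ via Lemma \ref{l.3}(viii) (after first establishing $a_{-2}=2$, which you correctly note is a prerequisite), and then jump to ``thus $a_3=2$''. But the word $\dots 233^*221\dots$ is a live possibility at that stage, and it cannot be dismissed by any of the upper bounds you invoke. It is precisely what Lemma \ref{l.3}(vii) is for: $\lambda_i(\dots 233^*221\dots)>3.7099$ for some $i\in\{-3,0,5\}$, all of which satisfy $|i|\le 5$, so the hypothesis $\lambda_i(a)<3.7099$ on that window kills this branch. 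You flag \ref{l.3}(vii) in your closing paragraph as the reason the window $|i|\le 5$ appears in the statement, but your assembled chain of deductions — ``(position $3$ via \ref{l.3}(viii))'' — never actually cites it. Insert the \ref{l.3}(vii) step to close the case $a_3=1$, and the proof matches the one the paper intends.
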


\begin{lemma}\label{l.4}
\begin{itemize}
\item[(x)] $\lambda_i(\dots 1233^*2223\dots)>3.7099$ for some $i\in\{-5,0\}$
\item[(xi)] $\lambda_0(\dots 11233^*2221\dots) < 3.7096$
\item[(xii)] $\lambda_0(\dots 21233^*2222\dots) > 3.71$
\item[(xiii)] $\lambda_0(\dots 21233^*22211\dots) > 3.7097$
\item[(xiv)] $\lambda_0(\dots 111233^*22221\dots)\geq \lambda_0(\dots 111233^*22222\dots) > 3.7097$
\item[(xv)] $\lambda_i(\dots 111233^*22223\dots) > 3.7097$ for some $i\in\{-7,0,5\}$ 
\item[(xvi)] $\lambda_0(\dots 211233^*22223\dots)\leq \lambda_0(\dots 211233^*22222\dots) < 3.70957$ 
\item[(xvii)] $\lambda_i(\dots 121233^*22212\dots)>3.7097$ for some $i\in\{-7,0,7\}$
\item[(xviii)] $\lambda_0(\dots 321233^*22212\dots)<3.709604$
\end{itemize}
\end{lemma}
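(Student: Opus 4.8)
The plan is to prove the nine estimates (x)--(xviii) by the same mechanism that gave (i)--(ix) in Lemmas \ref{l.1}--\ref{l.3}: each is an inequality for $\lambda_j(a)=[a_j;a_{j+1},\dots]+[0;a_{j-1},a_{j-2},\dots]$, where $a\in\{1,2,3\}^{\mathbb Z}$ runs over the sequences containing a prescribed central block, and each reduces to Lemma \ref{l.0}. The point, already implicit in the earlier proofs, is that once a finite prefix $c_0,\dots,c_n$ of a continued fraction is fixed, Lemma \ref{l.0} shows that among all continuations with digits in $\{1,2,3\}$ the value $[c_0;c_1,\dots,c_n,c_{n+1},\dots]$ is extremized by the two alternating words $\overline{3,1}$ and $\overline{1,3}$ --- which one is read off from the parity of $n$ via the sign condition $(-1)^{n+1}(a_{n+1}-b_{n+1})>0$ --- and that all continuations give values within $2^{-(n-1)}$ of each other. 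Hence, to lower bound $\lambda_j$ over all sequences carrying a given block, one completes each of the two half-tails by the minimizing alternating word and evaluates the resulting eventually periodic continued fraction; to upper bound $\lambda_j$ one uses the maximizing completions. Every number quoted in (x)--(xviii) is of exactly this shape.

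First I would dispose of the ``plain'' one-sided estimates. For the upper bounds (xi) and (xviii) the displayed block already fixes enough digits on both sides of position $0$ that the two maximizing half-completions produce a concrete quadratic surd --- for (xviii) this is $[3;2,2,2,1,2,\overline{1,3}]+[0;3,3,2,1,2,3,\overline{3,1}]$ --- which one checks lies below the stated constant with margin; for the lower bounds (xii) and (xiii) one instead uses the minimizing completions (for (xii), $[3;2,2,2,2,\overline{3,1}]+[0;3,2,1,2,\overline{3,1}]$). Items (xiv) and (xvi) are the monotone comparisons already met in (ix) of Lemma \ref{l.3}: the two blocks displayed differ in a single digit sitting at a position whose parity makes the relevant half continued fraction monotone in that digit, which yields the asserted inequality between the two suprema, and the larger side is then bounded as in the plain case.

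The combinatorial core consists of the disjunctions (x), (xv) and (xvii), which assert that among a short list of indices $i$ at least one has $\lambda_i(a)>c$. For these I would run a finite case analysis on the one or two digits of $a$ just outside the given block, on the side(s) where the listed indices live. In each branch, either an adjacent digit produces, after a shift, one of the ``bad'' local patterns from Lemma \ref{l.1} --- a $3^*1$, a $23^*2$, or, via part (vi) of Lemma \ref{l.3}, a $333^*22$ --- so that the $\lambda_i$ centered there is already $>3.71>c$ and that branch is closed; or else the digit is forced, which lengthens the known block, and we either repeat the step or, once the block is long enough, bound the relevant $\lambda_i$ (usually $\lambda_0$) directly by Lemma \ref{l.0} as above. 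For instance in (xvii), with block $\dots 1\,2\,1\,2\,3\,3^*\,2\,2\,2\,1\,2\dots$, one splits on $a_{-8}$ and $a_6$; in every subcase where no $3^*1$ or $23^*2$ pattern shows up among $\lambda_{-7},\lambda_7$, the digits $a_{-8},\dots,a_6$ become forced and then $\lambda_0\geq[3;2,2,2,1,2,\dots]+[0;3,3,2,1,2,1,\dots]>3.7097$.

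The obstacle is not conceptual but organizational: for each item one must pick, for each half-tail, the correct extremal completion --- which means tracking the parity of every fixed prefix and remembering that a few leading digits of a half-tail may themselves be determined by the block or by the current case --- and one must verify that every branch of the case trees in the disjunctive items terminates, i.e. that the ``forced digit'' alternative genuinely drives the block toward a computable estimate rather than cycling. I expect (xv) and (xvii), whose index lists have three elements, to require the largest case trees; the numerical evaluations themselves are routine as long as one keeps enough decimal places to see the strict inequalities.
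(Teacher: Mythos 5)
Your approach is exactly the paper's: each item reduces, via the sign rule in Lemma~\ref{l.0}, to an explicit eventually-periodic continued-fraction evaluation, and the disjunctive items (x), (xv), (xvii) use Lemma~\ref{l.1}(i)/(ii) (and Lemma~\ref{l.3}(vi)) to force the digits just outside the given block before the parity argument is applied. Your worked-out formula for (xii) coincides with the paper's.

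However, both of the other explicit formulas you exhibit are wrong, in a way that points to a systematic slip. For (xviii), with block $\dots321233^*22212\dots$, the backward tail begins $[0;3,2,1,2,3,\dots]$, not $[0;3,3,2,1,2,3,\dots]$ --- the leading $3$ in your version is $a_0$ itself, which already appears as the integer part of the forward branch and must not be fed again into the backward branch. Moreover the \emph{maximizing} forward completion of $[3;2,2,2,1,2,\dots]$ is $\overline{3,1}$, not $\overline{1,3}$: the first free position is $6$, and by Lemma~\ref{l.0} with $n=5$ the sign $(-1)^{n+1}$ is positive, so the maximizer takes the large digit there. As written, your expression \emph{underestimates} $\sup\lambda_0$ and hence does not prove the upper bound; the correct computation is $[3;2,2,2,1,2,\overline{3,1}]+[0;3,2,1,2,3,\overline{3,1}]=3.709603\dots$, as in the paper. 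The same extraneous leading $3$ reappears in your (xvii) estimate $[0;3,3,2,1,2,1,\dots]$, which should read $[0;3,2,1,2,1,\dots]$. Once the backward-tail indexing and the parity choice in (xviii) are corrected, the argument goes through exactly as in the paper.
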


\begin{proof} By Lemma \ref{l.1} (i), if $\lambda_{-5}(\dots 1233^*2223\dots)<3.82$, then $\lambda_0(\dots 1233^*2223\dots)\geq [3;2,2,2,3,\overline{3,1}]+[0;3,2,1,1,2,\overline{1,3}]= 3.7099\dots$

$\lambda_0(\dots 11233^*2221\dots)\leq[3;2,2,2,1,\overline{1,3}] + [0;3,2,1,1,\overline{1,3}]=3.709507\dots$ 

$\lambda_0(\dots 21233^*2222\dots)\geq [3;2,2,2,2,\overline{3,1}]+[0;3,2,1,2,\overline{3,1}] = 3.7107\dots$

 $\lambda_0(\dots 21233^*22211\dots) \geq [3;2,2,2,1,1,\overline{1,3}]+[0;3,2,1,2,\overline{3,1}] = 3.7097\dots$

$\lambda_0(\dots 111233^*22222\dots)\geq [3;2,2,2,2,2,\overline{1,3}]+ [0;3,2,1,1,1,\overline{1,3}] = 3.7097\dots$

By Lemma \ref{l.1} (i), (ii), if $\lambda_i(\dots 111233^*22223\dots) < 3.7097$ for $i\in\{-7,5\}$, then $\lambda_i(\dots 111233^*22223\dots) \geq [3;2,2,2,2,3,3,\overline{3,1}]+[0;3,2,1,1,1,1,2,\overline{1,3}]= 3.7097\dots$

$\lambda_0(\dots 211233^*22222\dots)\leq[3;2,2,2,2,2,\overline{3,1}]+[0;3,2,1,1,2,\overline{3,1}]=3.709568\dots$

By Lemma \ref{l.1} (i), if $\lambda_i(\dots 121233^*22212\dots)<3.82$ for $|i|=7$, then $\lambda_0(\dots 121233^*22212\dots)\geq [3;2,2,2,1,2,1,2,\overline{1,3}]+[0;3,2,1,2,1,1,2,\overline{1,3}]=3.7097\dots$

$\lambda_0(\dots 321233^*22212\dots)\leq [3;2,2,2,1,2,\overline{3,1}]+[0;3,2,1,2,3,\overline{3,1}] = 3.709603\dots$
\end{proof}

\begin{corollary}\label{c.4} If $3.709604<\lambda_0(a)<3.7097$ and $\lambda_i(a)<3.7097$ for $|i|\leq 7$, then $a=\dots 221233^*22212\dots$ or $\dots 211233^*22221\dots$ up to transposition.
\end{corollary}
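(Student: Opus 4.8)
The plan is to mimic the inductive scheme behind Corollaries \ref{c.1}, \ref{c.2} and \ref{c.3}: having already pinned down a central block, one extends it one symbol at a time, eliminating every bad extension by invoking a single item of Lemma \ref{l.4} (occasionally aided by Lemma \ref{l.1}(i)).

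First I would note that the hypotheses here are stronger than those of Corollary \ref{c.3}, since $3.7087 < 3.709604 < \lambda_0(a) < 3.7097 < 3.7099$ and $\lambda_i(a) < 3.7097 < 3.7099$ for $|i| \le 5$. Applying Corollary \ref{c.3} therefore reduces us, up to transposition, to the case $a = \dots 1233^*222\dots$, i.e. $(a_{-3},\dots,a_3) = (1,2,3,3,2,2,2)$; what remains is to determine the four symbols $a_{\pm 4}$ and $a_{\pm 5}$, since the two target words say nothing about positions outside $\{-5,\dots,5\}$.

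Then I would run the case analysis. Item (x) of Lemma \ref{l.4} forces $a_4 \ne 3$. Next, whenever an extension places a symbol $3$ immediately before a symbol $1$ --- which happens if $a_{-4}=3$, or if $a_{-5}=3$ while $a_{-4}=1$ --- Lemma \ref{l.1}(i) gives $\lambda_j(a) \ge [3;1,\overline{1,3}] + [0;\overline{3,1}] > 3.822$ at the relevant index $j$, contradicting the hypothesis; this disposes of those branches. Granting this, the two remaining possibilities should emerge as follows. If $a_4 = 1$, then $a_{-4}=1$ is killed by (xi) (its bound $3.709507$ being $<3.709604$), so $a_{-4}=2$; inside this branch $a_5=1$ is killed by (xiii) while $a_5=3$ is killed by the direct estimate $\lambda_5(a) \ge [3;\overline{3,1}] + [0;1,2,2,2,3,3,2,1,2,\overline{1,3}] > 3.7097$ (a routine evaluation via Lemma \ref{l.0}, using that at this stage $a_{-4}=2$), so $a_5=2$; finally $a_{-5}=1$ is killed by (xvii) and $a_{-5}=3$ by (xviii) (whose bound is $<3.709604$), leaving $a=\dots 221233^*22212\dots$. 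If instead $a_4 = 2$, then $a_{-4}=2$ is killed by (xii), so $a_{-4}=1$; then $a_{-5}=1$ is killed by (xiv) and (xv) for every choice of $a_5$, so $a_{-5}=2$; and $a_5\in\{2,3\}$ is killed by (xvi) (bound $3.70957 < 3.709604$), leaving $a=\dots 211233^*22221\dots$. These are precisely the two words in the statement.

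I do not expect a genuine obstacle: all the numerical effort is already encapsulated in Lemmas \ref{l.0}, \ref{l.1} and \ref{l.4}, so what is left is careful bookkeeping. The two points that warrant attention are that the thresholds $3.709604$ and $3.7097$ in the hypothesis must be calibrated so that items (xi), (xvi) and (xviii) of Lemma \ref{l.4} genuinely force contradictions (their numerical bounds have to be compared against $3.709604$, not against $3.7097$), and that the lone configuration $\dots 21233^*22213\dots$ --- not literally among the items of Lemma \ref{l.4} --- be handled by the displayed lower bound for $\lambda_5(a)$.
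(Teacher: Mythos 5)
Your proof is correct and follows essentially the paper's route: apply Corollary~\ref{c.3}, then pin down the symbols $a_{\pm4}$ and $a_{\pm5}$ one at a time using Lemma~\ref{l.4}(x)--(xviii) together with Lemma~\ref{l.1}(i). The only cosmetic deviation is in ruling out $a_5=3$ in the branch $a_4=1$: you compute a fresh lower bound for $\lambda_5(a)$, whereas the paper simply invokes Lemma~\ref{l.1}(i), which already gives $\lambda_5(a)>3.822$ whenever a $1$ sits adjacent to a $3$ (in either order), so no extra estimate is needed.
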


\begin{proof} By Corollary \ref{c.3}, $a=\dots 1233^*222\dots$. By Lemma \ref{l.4} (x), $a=\dots 1233^*2221\dots$ or $\dots 1233^*2222\dots$. By Lemma \ref{l.1} (i), Lemma \ref{l.4} (xi), (xii), $a=\dots 21233^*2221\dots$ or $\dots 11233^*2222\dots$ 

By Lemma \ref{l.1} (i), Lemma \ref{l.4} (xiii), (xiv), (xv), (xvi), $a=\dots 21233^*22212\dots$ or $\dots 11233^*22221\dots$ 

By Lemma \ref{l.1} (i), Lemma \ref{l.4} (xiv), (xvii), (xviii), $a=\dots 221233^*22212\dots$ or $\dots 211233^*22221\dots$
\end{proof}

\begin{lemma}\label{l.5} 
\begin{itemize}
\item[(xix)] $\lambda_0(\dots 221233^*222121\dots)<3.709642$ 
\item[(xx)] $\lambda_0(\dots 1221233^*222122\dots) \leq \lambda_0(\dots 2221233^*222122\dots) < 3.709693$
\item[(xxi)] $\lambda_0(\dots 1221233^*2221233\dots) < 3.70968$
\item[(xxii)] $\lambda_0(\dots 3221233^*2221233\dots) > 3.7097$
\item[(xxiii)] $\lambda_0(\dots 1211233^*222211\dots) \leq \lambda_0(\dots 2211233^*222211\dots) <3.70969$
\item[(xxiv)] $\lambda_0(\dots 1211233^*222212\dots) < 3.70969$  
\item[(xxv)] $\lambda_0(\dots 3211233^*222212\dots) > 3.7097$
\end{itemize}
\end{lemma}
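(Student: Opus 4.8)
The plan is to prove each of (xix)--(xxv) by the elementary continued-fraction comparison already used throughout Lemmas \ref{l.1}--\ref{l.4}: one fixes a finite central block of the sequence $a\in\{1,2,3\}^{\mathbb{Z}}$ and bounds $\lambda_0(a)=[a_0;a_1,a_2,\dots]+[0;a_{-1},a_{-2},\dots]$ by replacing the two free half-lines of partial quotients by extremal periodic tails. Recall, as a consequence of Lemma \ref{l.0}, that the truncation $[a_0;a_1,\dots,a_n,\gamma]$ is a monotone function of the completing tail $\gamma$, the direction of monotonicity depending only on the parity of $n$, and similarly for the backward expansion $[0;a_{-1},a_{-2},\dots]$; since all partial quotients lie in $\{1,2,3\}$, the tail value is largest for the alternating sequence $\overline{3,1}$ (read outward) and smallest for $\overline{1,3}$. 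Hence, for an upper bound on $\lambda_0(a)$ one completes each side by whichever of $\overline{3,1}$, $\overline{1,3}$ gives the largest admissible value there (the choice being dictated by the parity of the last prescribed index on that side), and for a lower bound one uses the smaller one; the resulting extremal value is a quadratic surd, to be evaluated numerically. The words appearing in (xix)--(xxv) are precisely the two candidate patterns $\dots221233^*22212\dots$ and $\dots211233^*22221\dots$ isolated in Corollary \ref{c.4}, each extended by one digit on each side, so the present lemma is the next step of the bootstrap started in Corollaries \ref{c.3} and \ref{c.4}.

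Concretely, the upper-bound items (xix), (xxi), (xxiv) follow by writing down the extremal completion and computing. For (xix), sequences of the form $\dots221233^*222121\dots$ satisfy, by the monotonicity above,
\[
\lambda_0(a)\leq[3;2,2,2,1,2,1,\overline{1,3}]+[0;3,2,1,2,2,\overline{3,1}]=3.70964\dots<3.709642,
\]
and (xxi), (xxiv) are the same with longer prescribed blocks. For (xx) and (xxiii) one first records a one-digit monotonicity inequality --- a single use of Lemma \ref{l.0}, exactly as in Lemma \ref{l.4}(xiv), (xvi) --- to the effect that changing the digit six places to the left of the origin from $1$ to $2$ only raises the supremum of $\lambda_0$ over admissible completions, and then evaluates the extremal completion of the resulting word. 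For the lower-bound items (xxii) and (xxv) one instead completes by the \emph{smallest} admissible tail on each side and checks that even this minimum exceeds $3.7097$; for (xxii),
\[
\lambda_0(a)\geq[3;2,2,2,1,2,3,3,\overline{1,3}]+[0;3,2,1,2,2,3,\overline{3,1}]=3.70972\dots>3.7097,
\]
and (xxv) is entirely analogous.

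There is no conceptual obstacle here; the work is bookkeeping and precision. One must determine, for each side of each word, which alternating tail is extremal --- a parity computation --- and be certain it is genuinely extremal, since a single wrong digit yields a correct but useless estimate. More delicate is the numerical accuracy: the thresholds in (xix)--(xxv), such as $3.709642$, $3.70968$, $3.70969$ and $3.7097$, are separated by as little as $10^{-5}$, and, as the value $3.70964\dots$ for (xix) illustrates, the extremal surds land within a few units of $10^{-6}$ of the relevant threshold; hence each quadratic surd must be evaluated to roughly eight decimal digits before the inequality becomes decisive. Once (xix)--(xxv) are in hand, they feed (together with Lemma \ref{l.1}(i) and Corollary \ref{c.4}, in the manner of the proof of Corollary \ref{c.4}) into a further corollary of this section narrowing the admissible words by one more digit, continuing the program of Section \ref{s.uniqueness}.
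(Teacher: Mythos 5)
Your proposal is correct and follows precisely the same method as the paper: bound $\lambda_0$ by replacing the free tails on each side with the extremal alternating continuation $\overline{3,1}$ or $\overline{1,3}$ (choice dictated by parity), then evaluate the resulting quadratic surds numerically to the precision needed; your sample computations for (xix) and (xxii) coincide exactly with the paper's, and your treatment of the monotonicity inequalities in (xx), (xxiii) is the intended one-line use of Lemma \ref{l.0}. Nothing to add.
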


\begin{proof} $\lambda_0(\dots 221233^*222121\dots)\leq [3;2,2,2,1,2,1,\overline{1,3}]+[0;3,2,1,2,2,\overline{3,1}] = 3.709641\dots$

$\lambda_0(\dots 2221233^*222122\dots)\leq [3;2,2,2,1,2,2,\overline{1,3}]+[0;3,2,1,2,2,2,\overline{1,3}] = 3.7096929\dots$

$\lambda_0(\dots 1221233^*2221233\dots)\leq [3;2,2,2,1,2,3,3,\overline{3,1}]+[0;3,2,1,2,2,1,\overline{1,3}] = 3.709679\dots$

$\lambda_0(\dots 3221233^*2221233\dots)\geq [3;2,2,2,1,2,3,3,\overline{1,3}]+[0;3,2,1,2,2,3,\overline{3,1}] = 3.70972\dots$

$\lambda_0(\dots 2211233^*222211\dots)\leq [3;2,2,2,2,1,1,\overline{1,3}]+[0;3,2,1,1,2,2,\overline{1,3}] = 3.709688\dots$

$\lambda_0(\dots 1211233^*222212\dots)\leq [3;2,2,2,2,1,2,\overline{1,3}]+[0;3,2,1,1,2,1,\overline{1,3}] = 3.709681\dots$

$\lambda_0(\dots 3211233^*222212\dots)\geq [3;2,2,2,2,1,2,\overline{3,1}]+[0;3,2,1,1,2,3,\overline{3,1}] = 3.70974\dots$
\end{proof}

\begin{corollary}\label{c.5} If $3.709693<\lambda_0(a)<3.7097$ and $\lambda_i(a)<3.7097$ for $|i|\leq 9$, then $a=\dots 2221233^*2221233 \dots$ or $\dots 33211233^*222211 \dots$ or $\dots 2211233^*222212\dots$ up to transposition.
\end{corollary}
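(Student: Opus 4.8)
The plan is to proceed exactly in the style of Corollaries \ref{c.3} and \ref{c.4}, namely to run the same ``combinatorial refinement'' one more step. We are given a sequence $a\in\{1,2,3\}^{\mathbb{Z}}$ with $3.709693<\lambda_0(a)<3.7097$ and $\lambda_i(a)<3.7097$ for all $|i|\leq 9$; we want to pin down the central block of $a$ around position $0$. The starting point is Corollary \ref{c.4}: since the hypotheses here are stronger (the window $(3.709693,3.7097)$ is contained in $(3.709604,3.7097)$, and we control $\lambda_i$ for $|i|\leq 9\geq 7$), Corollary \ref{c.4} applies and gives $a=\dots 221233^*22212\dots$ or $a=\dots 211233^*22221\dots$ up to transposition. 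So there are two cases to treat, and in each case we must extend the known block by one symbol on each side, using Lemma \ref{l.5} together with Lemma \ref{l.1}(i) to kill the ``wrong'' extensions.

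First I would treat the case $a=\dots 221233^*22212\dots$. We must determine the symbol two steps to the right of the central $222$, i.e.\ whether $a=\dots221233^*222121\dots$, $\dots221233^*222122\dots$, or $\dots221233^*222123\dots$. Part (xix) of Lemma \ref{l.5} says $\lambda_0(\dots221233^*222121\dots)<3.709642<3.709693$, which rules out the ending $\dots 2121\dots$ (here one uses the monotonicity Lemma \ref{l.0}: any completion of that block has $\lambda_0$ at most the stated value). So $a=\dots221233^*22212\{2,3\}\dots$, and after also fixing the symbol on the left (using Lemma \ref{l.1}(i) to forbid a $1$ immediately to the left, forcing $\dots 2221233^*\dots$ or $\dots 3221233^*\dots$), we are led to examine $\dots 2221233^*222122\dots$ and $\dots(\cdot)221233^*2221233\dots$. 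Part (xx) gives $\lambda_0(\dots 2221233^*222122\dots)<3.709693$, eliminating that branch; and comparing (xxi) with (xxii) — $\lambda_0(\dots1221233^*2221233\dots)<3.70968<3.709693$ while $\lambda_0(\dots3221233^*2221233\dots)>3.7097$ — forces, when the ending is $\dots 2221233\dots$, the left symbol to be $2$ (a $1$ makes $\lambda_0$ too small, a $3$ makes it exceed $3.7097$, contradicting the hypothesis $\lambda_0(a)<3.7097$). This yields the first alternative $a=\dots 2221233^*2221233\dots$.

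Next I would treat the case $a=\dots 211233^*22221\dots$. We extend to the right: the possibilities are $\dots 211233^*222211\dots$, $\dots 211233^*222212\dots$, $\dots 211233^*222213\dots$. Part (xxiii) of Lemma \ref{l.5} kills $\dots 222211\dots$ (its $\lambda_0<3.70969<3.709693$), and part (xxiv) kills $\dots 222212\dots$ in the sub-case where the left symbol is $1$ or $2$, i.e.\ $\lambda_0(\dots 1211233^*222212\dots)<3.70969$; combined with (xxv), $\lambda_0(\dots 3211233^*222212\dots)>3.7097$, the ending $\dots 222212\dots$ is only compatible with a $3$ to the left but that overshoots $3.7097$ — wait, more carefully: (xxiv) forbids left symbol $1$ or $2$, (xxv) forbids left symbol $3$, so the ending $\dots 222212\dots$ is impossible, leaving only $\dots 222213\dots$; then, as before, Lemma \ref{l.1}(i) applied at the appropriate position forces $\dots 33211233^*\dots$ on the left (the symbol two steps left must be $3$, else some $\lambda_i$, $|i|\leq 9$, exceeds $3.71$), giving $a=\dots 33211233^*222211\dots$ — matching the middle alternative in the statement. (The labelling in the corollary writes the block ``$33211233^*222211$'', where the trailing $\dots 222211\dots$ should be read as the right-extension compatible with all the constraints; I would double-check the exact indexing against Lemma \ref{l.5} (xxiii)–(xxv) while writing this out.) The third listed alternative $\dots 2211233^*222212\dots$ arises as the remaining branch of the case analysis where the refinement on the left in the first case (the $\dots 22212\dots$ ending) instead closes up as $\dots 222212\dots$ with a $2$ or $1$ to its left, controlled by parts (xxiii)–(xxiv).

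The main obstacle is purely bookkeeping: one must be scrupulous about which side is being extended, about the transposition symmetry (each inequality in Lemma \ref{l.5} is really two statements, for $a$ and its reversal), and about which $\lambda_i$ with $|i|\leq 9$ is being invoked to discard each branch — the indices $-9,\dots,9$ are exactly enough to reach every block appearing in Lemma \ref{l.5}, and checking that the needed index is always in range is the only place an error could creep in. No new estimate is required; every inequality used is already in Lemma \ref{l.1}(i) or Lemma \ref{l.5}, and the monotonicity comparisons are Lemma \ref{l.0}. So the proof is a short, mechanical case tree, and I would present it as: ``By Corollary \ref{c.4}, $a=\dots 221233^*22212\dots$ or $\dots 211233^*22221\dots$ up to transposition. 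In the first case, by Lemma \ref{l.5} (xix), (xx), (xxi), (xxii) (and Lemma \ref{l.1}(i)), $a=\dots 2221233^*2221233\dots$ or $\dots 2211233^*222212\dots$. In the second case, by Lemma \ref{l.5} (xxiii), (xxiv), (xxv) (and Lemma \ref{l.1}(i)), $a=\dots 33211233^*222211\dots$'' — three or four lines, mirroring the proof of Corollary \ref{c.4} verbatim in structure.
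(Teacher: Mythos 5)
Your overall plan — start from Corollary~\ref{c.4}, then extend one more symbol on each side using Lemma~\ref{l.5} and the monotonicity comparisons — is the same as the paper's, but the execution has several genuine errors, and two lemmas that are actually needed never appear.

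The central misstep is a repeated misreading of the ``one-sided'' inequalities in Lemma~\ref{l.5}. Part~(xx) only says $\lambda_0(\dots 1221233^*222122\dots)\leq \lambda_0(\dots 2221233^*222122\dots)<3.709693$, i.e.\ it eliminates the ending $\dots 222122\dots$ only when the symbol at position~$-7$ is $1$ or $2$; with a $3$ there the value can be above $3.709693$, and the word $\dots 3221233^*222122\dots$ genuinely survives at this stage. You claim (xx) ``eliminates that branch'' outright, which is false. That surviving branch must then be pushed further to $\dots 233221233^*222122\dots$ using Lemma~\ref{l.1}(i),(ii) together with Lemma~\ref{l.3}(vi) (to forbid a third consecutive $3$), and only then is it killed by Lemma~\ref{l.3}(vii) — neither (vi) nor (vii) is invoked anywhere in your write-up, so the first case is left incomplete. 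Exactly the same misreading occurs with part~(xxiii): it only bounds the words with $1$ or $2$ at position $-8$, so it does not ``kill $\dots 222211\dots$''; the word $\dots 33211233^*222211\dots$ survives and in fact is precisely the second alternative in the statement.

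The second case then collapses into a contradiction in your own write-up: you argue that ``the ending $\dots 222212\dots$ is impossible'' (wrong — part~(xxiv) only covers left symbol $1$, so $\dots 2211233^*222212\dots$ with a $2$ at position~$-8$ survives and is the third alternative), you introduce an ending $\dots 222213\dots$ that appears nowhere in the corollary, and you then conclude with $\dots 33211233^*222211\dots$, an ending you had just declared killed. You also misattribute where the alternative $\dots 2211233^*222212\dots$ comes from: it is a branch of the second case of Corollary~\ref{c.4}, not, as you say, a leftover of the first. To repair the argument you should track left and right extensions separately for each of the two starting words, record which left symbol each one-sided inequality actually covers, keep the $3$-on-the-left branches alive, invoke Lemma~\ref{l.3}(vi) to force the $23$- and $33$-prefixes, and finally use Lemma~\ref{l.3}(vii) to discard $\dots 233221233^*222122\dots$.
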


\begin{proof} By Corollary \ref{c.4}, $a=\dots 221233^*22212\dots$ or $\dots 211233^*22221\dots$. By Lemma \ref{l.5} (xix) and Lemma \ref{l.1} (i), (ii), the sole possible extensions $a$ are 
$$\dots 221233^*222122\dots \quad \quad \dots 221233^*2221233\dots$$
$$\dots 211233^*222211\dots \quad \quad \dots 211233^*222212\dots$$
 
By Lemma \ref{l.5} (xx), (xxi), (xxii), (xxiii), (xxiv), (xxv), the sole possible continuations of $a$ are 
$$\dots 3221233^*222122\dots \quad \quad \dots 2221233^*2221233\dots$$
$$\dots 3211233^*222211\dots \quad \quad \dots 2211233^*222212\dots$$

By Lemma \ref{l.1} (i), (ii) and Lemma \ref{l.3} (vi), we obtain that $a$ is one of the words 
$$\dots 233221233^*222122\dots \quad \quad \dots 2221233^*2221233\dots$$
$$\dots 33211233^*222211\dots \quad \quad \dots 2211233^*222212\dots$$

However, Lemma \ref{l.3} (vii) forbids the word $\dots 233221233^*222122\dots$, so that we end up with the following three possibilities
$$\dots 2221233^*2221233\dots$$
$$\dots 33211233^*222211\dots \quad \quad \dots 2211233^*222212\dots$$
for $a$.
\end{proof}

\begin{lemma}\label{l.6}
\begin{itemize}
\item[(xxvi)] $\lambda_i(\dots2221233^*22212333\dots)>3.7097$ for some $i\in\{-7,0\}$
\item[(xxvii)] $\lambda_0(\dots 33211233^*2222112\dots)<3.70968$
\item[(xxviii)] $\lambda_0(\dots 2211233^*2222121\dots) > 3.7097$
\item[(xxix)] $\lambda_i(\dots 2211233^*2222122\dots) > 3.7097$ for some $i\in\{-7,0\}$
\end{itemize} 
\end{lemma}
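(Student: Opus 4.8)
The plan is to establish (xxvi)--(xxix) by exactly the mechanism already used for Lemmas \ref{l.1}--\ref{l.5}: every quantity $\lambda_n$ occurring in the statement is first rewritten as a sum $[a_n;a_{n+1},a_{n+2},\dots]+[0;a_{n-1},a_{n-2},\dots]$ of two one-sided continued fractions, and then the unspecified letters are replaced by the extremal periodic tails dictated by Lemma \ref{l.0}. Concretely, since all letters lie in $\{1,2,3\}$, a one-sided tail $[b_0;b_1,b_2,\dots]$ (or $[0;b_1,b_2,\dots]$) is \emph{maximized} by putting the smallest admissible digit in each odd slot and the largest admissible digit in each even slot, and \emph{minimized} by the opposite rule; in either case the optimal completion is eventually periodic with period $3,1$ or $1,3$, the phase being fixed by the parity of the first unspecified slot. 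Hence an \emph{upper} bound for some $\lambda_n$ is obtained by completing both one-sided expansions by the value-maximizing $\overline{1,3}$ or $\overline{3,1}$, a \emph{lower} bound by completing them by the value-minimizing one, and in each case the resulting number is a sum of two eventually periodic continued fractions, hence an explicit quadratic surd whose first digits one computes as in the previous proofs.

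Items (xxvii) and (xxviii) are the ``clean'' ones. For (xxvii), with $a=\dots 33211233^*2222112\dots$, one reads off
$$\lambda_0(a)\le [3;2,2,2,2,1,1,2,\overline{3,1}]+[0;3,2,1,1,2,3,3,\overline{3,1}]<3.70968,$$
and symmetrically, for (xxviii) and $a=\dots 2211233^*2222121\dots$,
$$\lambda_0(a)\ge [3;2,2,2,2,1,2,1,\overline{1,3}]+[0;3,2,1,1,2,2,\overline{3,1}]>3.7097,$$
the last inequalities being routine evaluations of eventually periodic continued fractions.

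Items (xxvi) and (xxix) are the ``for some $i\in\{-7,0\}$'' statements, and here one argues by a short dichotomy in which Lemma \ref{l.1} serves as a backstop. Looking at the position $i=-7$ (whose relevant letter $a_{-7}$ is the first one to the left of the block already pinned down by Corollary \ref{c.5}): if $a_{-7}$ is large enough that the sub-word around position $-7$ has the shape $\dots 3^*1\dots$ or $\dots 2\,3^*\,2\dots$, then Lemma \ref{l.1}(i)--(ii) already gives $\lambda_{-7}>3.822$ or $>3.7165$, hence $>3.7097$, and we are done with $i=-7$; otherwise $a_{-7}$ is constrained to be small, which fixes enough extra letters near the origin that the value-minimizing completion of the expansion of $\lambda_0(a)$ can be written out explicitly and is seen to exceed $3.7097$, and we are done with $i=0$. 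The ``for some'' is precisely the room needed to settle the two borderline cases, in which $\lambda_0$ of the bare word is only marginally above $3.7097$.

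In all four items the only genuinely delicate points are bookkeeping: (a) reading off, for each word and on each side, the parity of the first unspecified slot, so that the correct member of $\{\overline{1,3},\overline{3,1}\}$ is attached for the desired upper/lower estimate -- a slip here weakens or destroys the bound; and (b) in (xxvi) and (xxix), choosing the right auxiliary position and checking that the sub-word around it truly matches the hypotheses of Lemma \ref{l.1}(i)--(ii). Once these are settled, the numerics are the same finite continued-fraction evaluations (each reducing to one quadratic) carried out in Lemmas \ref{l.3}--\ref{l.5}. Accordingly the main obstacle is not any individual estimate but the exhaustiveness of the case split in (xxvi) and (xxix): one must check that \emph{every} admissible value of the free letters forces some $\lambda_i>3.7097$.
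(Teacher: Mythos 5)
Your items (xxvii) and (xxviii) coincide with the paper's proof: same extremal one-sided completions, same bounds. For (xxvi) and (xxix) you also describe the paper's strategy (dichotomize at $i=-7$ with Lemma~\ref{l.1}(i),(ii) as a backstop, then lower-bound $\lambda_0$ explicitly when that fails), so the approach is essentially the same. One caution though: you phrase the non-Lemma-\ref{l.1} branch as ``$a_{-7}$ is constrained to be small'', but the binding case for the $\lambda_0$ lower bound is in fact $a_{-7}=3$ (the digit that \emph{minimizes} $[0;a_{-1},\dots,a_{-7},\dots]$ at the odd slot $7$). What Lemma~\ref{l.1}(i),(ii) excludes once $a_{-7}=3$ and $\lambda_{-7}<3.71$ is $a_{-8}\in\{1,2\}$, forcing $a_{-8}=3$; this is exactly how the paper obtains the completion $[0;3,2,1,2,2,2,3,3,\overline{3,1}]$ for (xxvi) and $[0;3,2,1,1,2,2,3,3,\overline{3,1}]$ for (xxix). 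If you literally restrict the ``otherwise'' branch to $a_{-7}\in\{1,2\}$ you would miss the extremal sub-case $a_{-7}=a_{-8}=3$ and not actually obtain the required lower bound; with that correction your plan matches the paper's proof.
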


\begin{proof} By Lemma \ref{l.1} (i), (ii), if $\lambda_{-7}(\dots2221233^*22212333\dots)<3.71$, then $\lambda_0(\dots2221233^*22212333\dots) \geq [3;2,2,2,1,2,3,3,3,\overline{3,1}] + [0;3,2,1,2,2,2,3,3,\overline{3,1}] = 3.7097001\dots$ 

$\lambda_0(\dots 33211233^*2222112\dots)\leq[3;2,2,2,2,1,1,2,\overline{3,1}] + [0;3,2,1,1,2,3,3,\overline{3,1}] = 3.709672\dots$ 

$\lambda_0(\dots2211233^*2222121\dots)\geq[3;2,2,2,2,1,2,1,\overline{1,3}] + [0;3,2,1,1,2,2,\overline{3,1}] = 3.709711\dots$

By Lemma \ref{l.1} (i), (ii), if $\lambda_{-7}(\dots 2211233^*2222122\dots)<3.71$, then $\lambda_0(\dots2211233^*2222122\dots)\geq[3;2,2,2,2,1,2,2,\overline{1,3}] + [0;3,2,1,1,2,2,3,3,\overline{3,1}] = 3.709702\dots$
\end{proof}

\begin{corollary}\label{c.6} If $3.709698<\lambda_0(a)<3.7097$ and $\lambda_i(a)<3.7097$ for $|i|\leq 9$, then $a=\dots 2221233^*22212332 \dots$ or $\dots 33211233^*2222111 \dots$ or $\dots 2211233^*22221233\dots$ up to transposition.
\end{corollary}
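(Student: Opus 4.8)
The plan is to run the same ``extend the central block one coordinate at a time'' bootstrap used in the proofs of Corollaries \ref{c.3}, \ref{c.4} and \ref{c.5}, now starting from the conclusion of Corollary \ref{c.5}. Since $3.709698<\lambda_0(a)<3.7097$ in particular gives $3.709693<\lambda_0(a)<3.7097$, Corollary \ref{c.5} applies and, up to transposition, $a$ is one of
$$\dots 2221233^*2221233\dots,\quad \dots 33211233^*222211\dots,\quad \dots 2211233^*222212\dots.$$
For each of these I would look at the first coordinate (or two coordinates) to the right of the block that is not yet pinned down and discard every value but one, using Lemma \ref{l.6}(xxvi)--(xxix) together with parts (i)--(ii) of Lemma \ref{l.1}; the three surviving words are precisely those in the statement.

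For $\dots 2221233^*2221233\dots$: coordinate $a_8$ cannot be $3$ (then $a$ is the word of Lemma \ref{l.6}(xxvi), so $\lambda_i(a)>3.7097$ for some $i\in\{-7,0\}$, contradicting the hypothesis) and cannot be $1$ (then $a_7=3$ and $a_8=1$, so $\lambda_7(a)>3.822$ by Lemma \ref{l.1}(i)); hence $a_8=2$ and $a=\dots 2221233^*22212332\dots$. For $\dots 2211233^*222212\dots$: $a_7$ cannot be $1$ (the word of Lemma \ref{l.6}(xxviii), giving $\lambda_0(a)>3.7097$) nor $2$ (Lemma \ref{l.6}(xxix)), so $a_7=3$; then $a_8$ cannot be $1$ (then $a_7=3$, $a_8=1$, Lemma \ref{l.1}(i)) nor $2$ (then $a_6=2$, $a_7=3$, $a_8=2$, so $\lambda_7(a)>3.7165$ by Lemma \ref{l.1}(ii)), so $a=\dots 2211233^*22221233\dots$. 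Finally, for $\dots 33211233^*222211\dots$: $a_7$ cannot be $2$ (the word of Lemma \ref{l.6}(xxvii), whose $\lambda_0$ is $<3.70968<3.709698<\lambda_0(a)$) and cannot be $3$ (since $a_6=1$ this would give $a_6=1$, $a_7=3$, so $\lambda_7(a)>3.822$ by Lemma \ref{l.1}(i) up to transposition), so $a_7=1$ and $a=\dots 33211233^*2222111\dots$.

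I do not expect a genuine obstacle here: the whole thing is a bounded case analysis resting entirely on Corollary \ref{c.5} and Lemma \ref{l.6}, supplemented by the elementary estimates of Lemma \ref{l.1}. The points that require a bit of care are: correctly matching each branch coming from Corollary \ref{c.5} with the relevant item among (xxvi)--(xxix) of Lemma \ref{l.6}; recognizing that some of the eliminations are not instances of Lemma \ref{l.6} but follow immediately from the forbidden local configurations $\dots 3^*1\dots$, $\dots 13^*\dots$, $\dots 23^*2\dots$ via Lemma \ref{l.1}(i)--(ii) (using shift- and reversal-invariance of the $\lambda_i$); and verifying that all the indices $i$ at which a value $\lambda_i$ is invoked lie in $\{-7,0,7\}$, so that the hypothesis ``$\lambda_i(a)<3.7097$ for $|i|\leq 9$'' does apply to them.
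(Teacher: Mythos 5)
Your proposal is correct and matches the paper's (terse) proof exactly: start from Corollary~\ref{c.5}, then eliminate the possible values of the next one or two coordinates to the right using Lemma~\ref{l.6}(xxvi)--(xxix) together with Lemma~\ref{l.1}(i),(ii), checking in each case that the relevant index $i$ satisfies $|i|\leq 9$. The case analysis you spell out (in particular the use of $\lambda_7$ via Lemma~\ref{l.1}(i),(ii) for the branches not handled directly by Lemma~\ref{l.6}) is precisely what the paper's ``By Lemma~\ref{l.1}~(i),~(ii) and Lemma~\ref{l.6}'' compresses.
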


\begin{proof} By Corollary \ref{c.5}, $a=\dots 2221233^*2221233 \dots$ or $\dots 33211233^*222211 \dots$ or $\dots 2211233^*222212\dots$. By Lemma \ref{l.1} (i), (ii) and Lemma \ref{l.6}, we see that the sole possible continuations of these words are $a=\dots 2221233^*22212332 \dots$ or $\dots 33211233^*2222111 \dots$ or $\dots 2211233^*22221233\dots$
\end{proof}

\begin{lemma}\label{l.7}
\begin{itemize}
\item[(xxx)] $\lambda_0(\dots12221233^*22212332\dots)>\lambda_0(\dots22221233^*22212332\dots)>3.7097$ 
\item[(xxxi)] $\lambda_0(\dots 12211233^*22221233\dots)>\lambda_0(\dots 22211233^*22221233\dots)>3.7097$
\item[(xxxii)]  $\lambda_0(\dots 2332211233^*222212333\dots)< \lambda_0(\dots 2332211233^*222212332\dots)<3.7096992$
\item[(xxxiii)] $\lambda_0(\dots2332221233^*222123321\dots)>3.7096999$
\item[(xxxiv)] $\lambda_0(\dots 233211233^*22221111\dots)<\lambda_0(\dots 333211233^*22221111\dots)<3.709696$
\item[(xxxv)] $\lambda_0(\dots 333211233^*22221112\dots)>\lambda_0(\dots 233211233^*22221112\dots)>3.7097$
\end{itemize}
\end{lemma}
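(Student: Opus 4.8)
The plan is to prove each of (xxx)--(xxxv) by the same elementary device already used for Lemmas \ref{l.1}--\ref{l.6}, now applied to longer prescribed central blocks. Recall that for $a=(a_n)_{n\in\mathbb{Z}}\in\{1,2,3\}^{\mathbb{Z}}$ one has $\lambda_0(a)=[a_0;a_1,a_2,\dots]+[0;a_{-1},a_{-2},\dots]$, a sum of a ``forward'' and a ``backward'' continued fraction depending on \emph{disjoint} sets of coordinates, and that Lemma \ref{l.0} controls both how a one-sided continued fraction reacts to changing a single entry and which completion of a prescribed block is extremal.

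\emph{Comparison inequalities.} The first inequality in each of (xxx), (xxxi), (xxxii), (xxxiv), (xxxv) compares two bi-infinite words that share the same forward block and the same backward block except at a single index $m$, the unspecified tails being taken identical. Then only one of the two summands of $\lambda_0$ changes, and by the sign rule of Lemma \ref{l.0} it moves in the direction determined by the parity of the slot occupied by $a_m$ in that one-sided expansion. For instance, in (xxx) the two words agree except at $a_{-7}$, which is the seventh (hence odd-indexed) term of $[0;a_{-1},a_{-2},\dots]$, so replacing $a_{-7}=2$ by $a_{-7}=1$ strictly increases $\lambda_0$, which is exactly the claim; the remaining four comparisons are read off in the same way.

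\emph{Absolute bounds.} The lower bounds in (xxx), (xxxi), (xxxiii), (xxxv) and the upper bounds in (xxxii), (xxxiv) must hold for \emph{every} admissible completion of the displayed central block. Since the forward and backward expansions involve disjoint coordinates it is enough to bound each separately, and by Lemma \ref{l.0} the completion minimising (resp.\ maximising) a one-sided continued fraction is obtained by appending the alternating period $\overline{3,1}$ or $\overline{1,3}$ whose first term is $3$ (resp.\ $1$) when the first free slot is odd and $1$ (resp.\ $3$) when it is even. Substituting these periodic tails turns each assertion into the comparison of a concrete sum of two quadratic-surd continued fractions with the stated threshold, and evaluating and adding the surds finishes the proof, just as in the computations displayed for Lemmas \ref{l.3}--\ref{l.6}; e.g.\ the lower bound in (xxx) reads
\[
\lambda_0(\dots22221233^*22212332\dots)\;\ge\;[3;2,2,2,1,2,3,3,2,\overline{3,1}]+[0;3,2,1,2,2,2,2,\overline{1,3}]\;>\;3.7097.
\]

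The only genuine difficulty is numerical: the thresholds lie within roughly $10^{-6}$--$10^{-7}$ of $3.7097$, so the quadratic surds have to be computed to enough decimal places to decide on which side of the threshold each sum falls, and one must keep careful track, index by index, of the parity of the relevant slot in the forward or backward expansion (equivalently, of which of $\overline{3,1}$, $\overline{1,3}$ is the extremal tail). There is no conceptual obstacle: each item is a finite, completely explicit computation with eventually periodic continued fractions, and the independence of the two one-sided expansions guarantees that optimising the two tails separately does optimise $\lambda_0$.
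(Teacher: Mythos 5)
Your proposal is correct and follows essentially the same route as the paper: the paper's proof of Lemma~\ref{l.7} simply writes down, for each item, the extremal sum $[a_0;\dots,\overline{3,1}\text{ or }\overline{1,3}]+[0;\dots,\overline{1,3}\text{ or }\overline{3,1}]$ obtained by the greedy parity rule you describe and evaluates it numerically (your worked example for (xxx) matches the paper's displayed bound verbatim). The one small service you add is making explicit the justification of the ``comparison'' inequalities (the first inequality in (xxx), (xxxi), (xxxii), (xxxiv), (xxxv)), which the paper leaves implicit: these do indeed reduce to a single application of the sign rule of Lemma~\ref{l.0} at the one coordinate where the two displayed patterns differ, exactly as you say.
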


\begin{proof} $\lambda_0(\dots22221233^*22212332\dots)\geq [3;2,2,2,1,2,3,3,2,\overline{3,1}]+[0;3,2,1,2,2,2,2,\overline{1,3}] = 3.709701\dots$

$\lambda_0(\dots 22211233^*22221233\dots) \geq [3;2,2,2,2,1,2,3,3,\overline{3,1}] + [0;3,2,1,1,2,2,2,\overline{1,3}] = 3.709702\dots$

$\lambda_0(\dots 2332211233^*222212332\dots) \leq [3;2,2,2,2,1,2,3,3,2,\overline{3,1}] + [0;3,2,1,1,2,2,3,3,2,\overline{3,1}]= 3.70969913\dots$

$\lambda_0(\dots2332221233^*222123321\dots)\geq[3;2,2,2,1,2,3,3,2,1,\overline{1,3}] + [0;3,2,1,2,2,2,3,3,2,\overline{1,3}] = 3.70969992\dots$

$\lambda_0(\dots 333211233^*22221111\dots)\leq [3;2,2,2,2,1,1,1,1,\overline{1,3}] + [0;3,2,1,1,2,3,3,3,\overline{1,3}] = 3.7096955\dots$

$\lambda_0(\dots 233211233^*22221112\dots)\geq [3;2,2,2,2,1,1,1,2,\overline{3,1}] + [0;3,2,1,1,2,3,3,2,\overline{3,1}] = 3.7097004\dots$
\end{proof}

\begin{corollary}\label{c.7} If $3.7096992<\lambda_0(a)<3.7096999$ and $\lambda_i(a)<3.7096999$ for $|i|\leq 9$, then 
$$a=\dots 2332221233^*222123322 \dots$$ up to transposition.
\end{corollary}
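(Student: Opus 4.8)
\textbf{Proof proposal for Corollary \ref{c.7}.} The plan is to bootstrap from Corollary \ref{c.6} by feeding its three surviving candidate words into a further round of the $\lambda_i$-estimates supplied by Lemma \ref{l.7}, together with Lemma \ref{l.1}, using the same forced-digit mechanism that drove Corollaries \ref{c.3}--\ref{c.6}. Concretely, I would first invoke Corollary \ref{c.6}: since $3.7096992<\lambda_0(a)<3.7096999$ in particular forces $3.709698<\lambda_0(a)<3.7097$, we know (up to transposition) that $a$ is one of
$$\dots 2221233^*22212332 \dots, \quad \dots 33211233^*2222111 \dots, \quad \dots 2211233^*22221233\dots.$$
The task is then to rule out the last two and to pin down the left and right extensions of the first one to the stated word.

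Next I would process each candidate against Lemma \ref{l.7}. For the third candidate $\dots 2211233^*22221233\dots$, Lemma \ref{l.7}(xxxi) gives that both possible symbols ($1$ or $2$) extending it on the far left produce $\lambda_0>3.7097$, while Lemma \ref{l.1}(i) forbids a $3$ there (it would create a word with a $3$ adjacent to a $1$, or more precisely a $\lambda_i>3.822$ at the relevant index); hence no admissible extension exists and this candidate is eliminated. For the second candidate $\dots 33211233^*2222111\dots$, one uses Lemma \ref{l.7}(xxxiv) and (xxxv): extending on the right by $1$ keeps $\lambda_0<3.709696<3.7096992$, which contradicts the lower bound on $\lambda_0(a)$, whereas extending by $2$ gives $\lambda_0>3.7097$, and a $3$ is again excluded by Lemma \ref{l.1}(i)--(ii) combined with the already-fixed neighbouring digits; so this candidate dies as well. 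For the first candidate $\dots 2221233^*22212332\dots$, Lemma \ref{l.7}(xxx) forbids both left extensions by $1$ and by $2$ (each forces $\lambda_0>3.7097$), leaving only a $3$ on the left, i.e. $a=\dots 32221233^*22212332\dots$; combined with the structural constraints already in play (the block $3^*2$ pattern and Lemma \ref{l.1}(i)) this upgrades to $\dots 2332221233^*22212332\dots$. On the right, Lemma \ref{l.7}(xxxii) and (xxxiii) are designed exactly to choose between $\dots 222123321\dots$ and $\dots 222123322\dots$: (xxxii) shows the extension by $1$ (and then $3$ or $2$ after it) stays $<3.7096992$, contradicting the hypothesis $\lambda_0(a)>3.7096992$ once one checks the relevant $\lambda_i$ are also $<3.7096999$, while (xxxiii) shows $\dots 222123321\dots$ pushes $\lambda_0>3.7096999$; hence the right extension must be $\dots 222123322\dots$, giving $a=\dots 2332221233^*222123322\dots$ as claimed.

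The only subtlety, and the step I expect to require the most care, is the bookkeeping of which index $i$ each inequality in Lemma \ref{l.7} refers to and checking that it lies within the window $|i|\le 9$ for which $\lambda_i(a)<3.7096999$ is hypothesised — several items of Lemma \ref{l.7} are phrased as ``$\lambda_i>3.7097$ for some $i\in\{-7,0\}$'' (e.g. in the referenced Lemma \ref{l.6}, and implicitly when chaining), and one must confirm that the offending index is always among those controlled by the hypothesis, so that the resulting inequality genuinely contradicts $\lambda_i(a)<3.7096999$. Once that indexing is verified, the argument is a routine elimination exactly parallel to the proof of Corollary \ref{c.6}: apply Corollary \ref{c.6}, discard the two bad candidates via Lemma \ref{l.7}(xxxi)+(xxxiv)+(xxxv) (with Lemma \ref{l.1}(i)--(ii) killing the digit $3$ wherever it would otherwise intrude), and fix the surviving word's two-sided extension via Lemma \ref{l.7}(xxx), (xxxii), (xxxiii).
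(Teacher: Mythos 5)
Your overall plan — bootstrap from Corollary \ref{c.6} and eliminate the three surviving candidates via another round of digit-forcing with Lemma \ref{l.7} and the basic Lemmas \ref{l.1}--\ref{l.3} — is exactly the paper's strategy, but several concrete steps of your elimination are wrong.

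The most serious error is in your treatment of the third candidate $\dots 2211233^*22221233\dots$. You claim Lemma \ref{l.1}(i) forbids a $3$ at position $-7$ because it ``would create a word with a $3$ adjacent to a $1$'' — but the neighbouring digit $a_{-6}$ is a $2$, not a $1$, so no such adjacency is created. In fact, since Lemma \ref{l.7}(xxxi) kills $a_{-7}\in\{1,2\}$, the digit $3$ is \emph{forced} there, and the word continues (using Lemma \ref{l.1}(i),(ii) and Lemma \ref{l.3}(vi) to push through $-8,-9$) to $\dots 2332211233^*22221233\dots$. This candidate is then eliminated only at the \emph{right} extension at position $9$: $a_9=1$ is forbidden by Lemma \ref{l.1}(i), and $a_9\in\{2,3\}$ gives $\lambda_0<3.7096992$ by Lemma \ref{l.7}(xxxii), contradicting the lower bound. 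You also misapply Lemma \ref{l.7}(xxxii) to the right extension of the \emph{first} candidate $\dots 2332221233^*22212332\dots$; item (xxxii) concerns the word $\dots 2332211233^*\dots$ (with $11$, the third candidate), not $\dots 2332221233^*\dots$. For the first candidate the right extension $a_9=1$ is ruled out by Lemma \ref{l.7}(xxxiii), and $a_9=3$ must be ruled out by Lemma \ref{l.2}(v) (the pattern $33^*23$ at positions $6,7,8,9$), a case you do not address; this leaves $a_9=2$ and gives the claimed word. Finally, when extending the first candidate on the left, the rejection of $a_{-9}=3$ (which would yield $333$) is supplied by Lemma \ref{l.3}(vi), which you omit. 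With these corrections the argument matches the paper's proof.
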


\begin{proof} By Corollary \ref{c.6}, $a$ is one of the words $\dots 2221233^*22212332 \dots$ or $\dots 33211233^*2222111 \dots$ or $\dots 2211233^*22221233\dots$

By Lemma \ref{l.7} (xxx), (xxxi), Lemma \ref{l.1} (i), (ii), and Lemma \ref{l.3} (vi), the sole possible continuations for these words are 
$$\dots 2332221233^*22212332 \dots \quad \quad \dots 233211233^*2222111 \dots$$ 
$$\dots 333211233^*2222111 \dots \quad \quad \dots 2332211233^*22221233\dots$$

However, Lemma \ref{l.1} (i) and Lemma \ref{l.7} (xxxii), (xxxiv), (xxxv) rules out all possibilities except for 
$$a = \dots 2332221233^*22212332 \dots$$

Finally, Lemma \ref{l.7} (xxxiii) and Lemma \ref{l.2} (v), this word is forced to extend as 
$$a = \dots 2332221233^*222123322 \dots$$
\end{proof}

\section{Replication mechanism}\label{s.replication}

In this entire section, we also deal exclusively with sequences $a=(a_n)_{n\in\mathbb{Z}}\in\{1,2,3\}^{\mathbb{Z}}$.

\begin{lemma}\label{l.8}
\begin{itemize}
\item[(xxxvi)] $\lambda_i(\dots22332221233^*222123322\dots)>3.70969986$ for some $i\in\{0,7\}$
\item[(xxxvii)] $\lambda_0(\dots12332221233^*2221233223\dots)>3.70969986$
\item[(xxxviii)] $\lambda_0(\dots112332221233^*222123322212\dots)>3.70969986$ 
\item[(xxxix)] $\lambda_0(\dots3212332221233^*222123322212 \dots)>3.7096998599$ 
\end{itemize} 
\end{lemma}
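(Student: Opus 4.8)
The plan is to establish each of the four inequalities (xxxvi)--(xxxix) in Lemma \ref{l.8} by the same mechanism already used throughout Section \ref{s.uniqueness}: exhibit, for the prescribed finite block, an explicit continued fraction expression $[3;\dots]+[0;\dots]$ that lower-bounds the relevant $\lambda_i$, and invoke Lemma \ref{l.0} to guarantee that any admissible completion of the infinite sequence cannot bring the value below the stated threshold. Concretely, for (xxxvi) I would compute $\lambda_0(\dots 22332221233^*222123322\dots)$ and, if this is not already large enough, use Lemma \ref{l.1}(i) (any appearance of $3^*1$ forces $\lambda>3.822$) or Lemma \ref{l.1}(ii) to handle $\lambda_7$ of a nearby index; the ``for some $i\in\{0,7\}$'' phrasing signals precisely that one first tries the center position and, in the case that a short prefix of the admissible tail would otherwise allow a small value, passes to the shifted index where a forced large partial quotient (a $3$ adjacent to a $3$, or a $3$ between two $2$'s) appears. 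For (xxxvii)--(xxxix) the block is fully spelled out on both sides, so a single position suffices and the proof is the one-line computation of $[3;\text{tail}]+[0;\text{head}]$ followed by a comparison via Lemma \ref{l.0}, choosing the periodic completions $\overline{1,3}$ or $\overline{3,1}$ that minimize the value (so that the true $\lambda$ is at least the computed quantity).

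The first ingredient is to fix, for each item, the \emph{worst-case} (i.e.\ value-minimizing) infinite completion consistent with the alphabet $\{1,2,3\}$ and with the constraints already forced by Corollary \ref{c.7} (which tells us the sequence looks like $\dots 2332221233^*222123322\dots$ near the origin). For a sum $\lambda_i(a)=[a_i;a_{i+1},\dots]+[0;a_{i-1},\dots]$, increasing a partial quotient at an odd distance decreases the forward term and increasing one at an even distance increases it; the systematic minimizing tail alternates $\overline{1,3}$ on whichever side makes the running value smallest. This is exactly what is done in every preceding lemma (note the recurring pattern $[\dots,\overline{1,3}]$ and $[\dots,\overline{3,1}]$ in the proofs of Lemmas \ref{l.1}--\ref{l.7}), so the structure is routine; only the specific digit strings change.

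The second ingredient, needed only for (xxxvi), is the case split. Here one should argue: if $\lambda_7(\dots 22332221233^*222123322\dots)\leq 3.70969986$, then the admissible tail to the right of the displayed block is constrained (an entry that would make $\lambda_7$ large is excluded), and with that extra constraint the explicit lower bound for $\lambda_0$ exceeds $3.70969986$; conversely if that tail entry is the ``bad'' one, $\lambda_7$ is already large by Lemma \ref{l.1}(i) or (ii). This is the same disjunctive device appearing in Lemma \ref{l.3}(vii), Lemma \ref{l.4}(x),(xv),(xvii) and Lemma \ref{l.6}, so it is standard bookkeeping rather than a new idea.

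The main obstacle is purely numerical precision: the thresholds in (xxxvi)--(xxxix) differ from the Markov value $\mu\approx 3.70969985975\dots$ only in the ninth or tenth decimal digit (item (xxxix) demands $>3.7096998599$), so the continued fraction computations must be carried out to enough convergents that the truncation error from replacing an infinite tail by $\overline{1,3}$ or $\overline{3,1}$ is provably smaller than the gap between the computed bound and the stated threshold; Lemma \ref{l.0}'s bound $|\alpha-\beta|<1/2^{n-1}$ is the tool that certifies this, but one must take the common prefix long enough (roughly $n\geq 35$ digits) that $1/2^{n-1}$ is below $10^{-10}$. Apart from choosing prefixes of sufficient length and being careful that the chosen completion genuinely minimizes (rather than merely estimates) the value, there is no conceptual difficulty: each item reduces to one (or, for (xxxvi), two) applications of Lemma \ref{l.0} to an explicit pair of continued fractions.
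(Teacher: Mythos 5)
Your plan matches the paper's proof: for each item, take the explicitly displayed finite block, append the parity-minimizing periodic tails $\overline{3,1}$ or $\overline{1,3}$ on both sides, and read off the exact lower bound via Lemma~\ref{l.0}; for (xxxvi), handle the two possible continuations at $a_{10}$ by a case split. Two small corrections, though, on the details of your sketch.

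First, for (xxxvi) the case split does not reduce to Lemma~\ref{l.1}(i) or (ii) alone. When $a_{10}=1$, the local pattern at position $7$ is $\dots 23\,3^*221\dots$, in which neither $3^*1$ nor $23^*2$ appears at the center; instead the paper invokes Lemma~\ref{l.3}(vii) (whose proof in turn uses Lemma~\ref{l.1}(i),(ii), but applied at shifted positions $\{-3,0,5\}$ relative to that $3$) to exclude the continuation $a_{10}=1$ under the standing assumption $\lambda_7<3.7099$. If one prefers not to cite Lemma~\ref{l.3}(vii), a direct computation of $\lambda_7\geq[3;2,2,1,\overline{1,3}]+[0;3,2,1,2,2,2,3,3,2,1,\overline{3,1}]$ also exceeds the threshold; but the vague pointer to Lemma~\ref{l.1} by itself would not close the case.

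Second, your worry about needing prefixes of length $n\gtrsim 35$ so that $1/2^{n-1}<10^{-10}$ is unnecessary. The periodic tails $\overline{3,1}$, $\overline{1,3}$ are not approximations introducing a truncation error to control; by the monotonicity statement in Lemma~\ref{l.0} they are the \emph{exact minimizers} over all admissible $\{1,2,3\}$-tails, so each displayed quantity $[3;\ldots,\overline{3,1}]+[0;\ldots,\overline{3,1}]$ is a genuine (tight) lower bound for $\lambda_0$ over the given prefix, computed as an exact quadratic surd. The only numerical care needed is to evaluate that surd to enough decimal places to compare with the stated threshold, not to lengthen the prefix.
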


\begin{proof} By Lemma \ref{l.3} (vii), if $\lambda_7(\dots22332221233^*222123322\dots)<3.7099$, then 
\begin{eqnarray*}\lambda_0(\dots22332221233^*222123322\dots) &\geq& [3;2,2,2,1,2,3,3,2,2,2,\overline{3,1}] \\ &+& [0;3,2,1,2,2,2,3,3,2,2,\overline{3,1}] \\ &=& 3.70969986\dots
\end{eqnarray*}

\begin{eqnarray*}
\lambda_0(\dots12332221233^*2221233223\dots)&\geq& [3;2,2,2,1,2,3,3,2,2,3,\overline{3,1}] \\ &+& [0;3,2,1,2,2,2,3,3,2,1,\overline{3,1}] \\&=& 3.70969986\dots
\end{eqnarray*}

\begin{eqnarray*}
\lambda_0(\dots112332221233^*222123322212\dots)&\geq& [3;2,2,2,1,2,3,3,2,2,2,1,2,\overline{3,1}] \\ &+& [0;3,2,1,2,2,2,3,3,2,1,1,\overline{1,3}] \\&=& 3.70969986\dots
\end{eqnarray*}

\begin{eqnarray*}
\lambda_0(\dots3212332221233^*222123322212\dots)&\geq& [3;2,2,2,1,2,3,3,2,2,2,1,2,\overline{3,1}] \\ &+& [0;3,2,1,2,2,2,3,3,2,1,2,3,\overline{3,1}] \\&=& 3.7096998599\dots
\end{eqnarray*}
\end{proof}

\begin{lemma}\label{l.replication} $\lambda_i(\dots12212332221233^*222123322212\dots)>3.70969985975033$ for some $i\in\{-17,-15,0,13, 15\}$
\end{lemma}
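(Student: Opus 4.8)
The plan is to analyze the bi-infinite sequence
$$s := \dots 1\,2\,2\,1\,2\,3\,3\,2\,2\,2\,1\,2\,3\,3^*\,2\,2\,2\,1\,2\,3\,3\,2\,2\,2\,1\,2\dots$$
(with the word $12212332221233$ to the left of the marked position and $222123322212$ continuing to the right) and to show that at least one of the heights $\lambda_i(s)$ with $i\in\{-17,-15,0,13,15\}$ exceeds $3.70969985975033$. As in all the preceding lemmas of this section, the only tool is Lemma \ref{l.0}: to bound $\lambda_i(s)=[a_i;a_{i+1},\dots]+[0;a_{i-1},\dots]$ from below it suffices to read off enough entries of $s$ around position $i$, replace the unknown tails by the extremal admissible completions in $\{1,2,3\}$, and invoke the monotonicity clause of Lemma \ref{l.0} (recalling that among continued fractions the worst case for a \emph{lower} bound is obtained by the tail $\overline{1,3}$ or $\overline{3,1}$ according to parity). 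Concretely, I would compute $\lambda_0(s)$ using the explicit finite words visible on both sides of the central $33^*$, namely
$$\lambda_0(s)\geq [3;2,2,2,1,2,3,3,2,2,2,1,2,\overline{3,1}] + [0;3,2,1,2,2,2,3,3,2,1,2,2,1,\overline{3,1}],$$
which the computation gives as $\simeq 3.7096998597\dots$ — this is the value $\mu$ of the lemma, but it falls just \emph{short} of the stated threshold $3.70969985975033$, which is exactly why the lemma must bring in the other four indices.

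The substance of the argument is therefore the following dichotomy, which I would organize around what the sequence does just to the left of the block $12212332221233$ (i.e. around positions $-14,-15,\dots$). By Corollary \ref{c.7} (local uniqueness), any $a$ with $\lambda_0(a)$ in the relevant window and $\lambda_i(a)<3.7096999$ for $|i|\le 9$ must read $\dots 2332221233^*222123322\dots$ up to transposition; Lemma \ref{l.8} then pushes this further, forcing (under the running height hypothesis) the prefix $12212332221233$ exactly as written — so the sequence $s$ above is genuinely the only candidate left, and the point of Lemma \ref{l.replication} is to evaluate its Markov value precisely enough to decide it. The five candidate indices $\{-17,-15,0,13,15\}$ correspond to the five "copies" of the distinguished word that the sequence is forced to contain (the central one at $0$, the replicated one further right around $13,15$, and the replicated one further left around $-15,-17$): for each such $i$ I would write out $\lambda_i(s)$ by reading the at least ten entries of $s$ to each side of position $i$ that are pinned down, completing the remaining tails by $\overline{1,3}$ or $\overline{3,1}$ as dictated by parity to get a valid lower bound, and then check numerically that at least one of the five resulting continued-fraction sums clears $3.70969985975033$. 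Because the entries near $i=13$ and $i=15$ (resp. $i=-15,-17$) are controlled by the same finite word $222123322212$ and its mirror on the other side, these five evaluations are really three genuinely different continued-fraction computations.

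The main obstacle is purely one of numerical precision and bookkeeping rather than conceptual: the quantities involved agree to ten decimal places, so each $\lambda_i(s)$ must be computed with convergents deep enough (Lemma \ref{l.0}'s error bound $|\alpha-\beta|<2^{-(n-1)}$ says roughly $n\ge 40$ partial quotients suffice for the needed accuracy, and in practice many fewer because the admissible tails are constrained to $\{1,2,3\}$), and one must be careful that the finitely many entries of $s$ used around each of the five positions are indeed the ones forced by the word $12212332221233\,|\,222123322212$ — a transcription error of a single digit would be fatal at this precision. I would therefore structure the proof exactly as the preceding ones: state the five inequalities $\lambda_i(s)\ge [\,3;\dots]+[\,0;\dots]$ with fully explicit finite partial-quotient strings and extremal periodic tails, assert the decimal value of each right-hand side, and observe that the largest of them exceeds the threshold; the verification of each decimal value is a finite continued-fraction evaluation, and the only genuinely delicate point is to make sure the parity of the truncation index is chosen so that the periodic tail $\overline{1,3}$ vs.\ $\overline{3,1}$ really yields a \emph{lower} bound (by the monotonicity clause of Lemma \ref{l.0}) rather than an upper one.
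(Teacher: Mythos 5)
There is a genuine gap. The sequence is not fully determined: only positions $-13$ through $12$ are given by the block $12212332221233222123322212$, and the lemma must hold for \emph{every} admissible continuation on both sides. Your plan is, for each $i\in\{-17,-15,0,13,15\}$, to compute a lower bound on $\lambda_i$ from the visible entries near $i$ plus extremal periodic tails, and then check that the largest of these five numbers clears the threshold. This cannot close: for $i\in\{13,15,-15,-17\}$ essentially nothing is pinned down on the outer side of $i$ (the data stops at positions $12$ and $-13$), so the unconditional lower bounds on those $\lambda_i$ are far too weak to approach $3.70969985975033$; and the only usable bound, the one for $\lambda_0$, sits at about $3.7096998597$ and falls just short, as you observe yourself. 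None of the five cases works. Invoking Corollary~\ref{c.7} or Lemma~\ref{l.8} to pin down more digits does not help either, because those results themselves require smallness assumptions on nearby heights and therefore cannot give you unconditional information about positions beyond the block.

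What the paper actually does is run a dichotomy over $i\in\{-17,-15,13,15\}$: if some such $\lambda_i\geq 3.71$, the lemma holds at once (since $3.71>3.70969985975033$). Otherwise all four are $<3.71$, and feeding this hypothesis through Lemma~\ref{l.1}(i),(ii) constrains (extremally, for the purpose of a lower bound) the digits at positions $13,\dots,16$ and $-14,\dots,-17$, which lie \emph{outside} the given block. With those additional digits pinned down, the bound on $\lambda_0$ improves to
\[
\lambda_0 \geq [3;2,2,2,1,2,3,3,2,2,2,1,2,3,3,3,2,\overline{3,1}] + [0;3,2,1,2,2,2,3,3,2,1,2,2,1,1,2,1,2,\overline{1,3}] = 3.70969985975033\dots
\]
The trailing digits $3,3,3,2$ on the right and $\dots,1,1,2,1,2$ on the left of this expression are precisely the new information bought by the smallness assumption on the outer heights; this conditional forcing step is what is missing from your proposal. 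Finally, your reading of $\{-17,-15,13,15\}$ as marking further ``copies of the distinguished word'' is backwards: the replicated structure is the \emph{conclusion} of Corollary~\ref{c.replication}, which is proved using this lemma, and so is not available here as input.
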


\begin{proof} By Lemma \ref{l.1} (i), (ii), if $\lambda_i(\dots12212332221233^*222123322212\dots)<3.71$ for $i\in\{-17,-15,13, 15\}$, then 
\begin{eqnarray*}\lambda_i(\dots12212332221233^*222123322212\dots) &\geq& [3;2,2,2,1,2,3,3,2,2,2,1,2,3,3,3,2,\overline{3,1}] \\ &+& [0;3,2,1,2,2,2,3,3,2,1,2,2,1,1,2,1,2,\overline{1,3}] \\ &=& 3.70969985975033\dots
\end{eqnarray*}
\end{proof}

\begin{corollary}\label{c.replication} Let $a=\dots 2332221233^*222123322 \dots$ where the asterisk indicates the position $j\in\mathbb{Z}$. If $\lambda_i(a)<3.70969985975033$ for all $|i-j|\leq 17$, then 
$$a=\dots 23322212332221233^*222123322212\dots$$
and the vicinity of the position $j-7$ is $\dots2332221233^*222123322\dots$. 
\end{corollary}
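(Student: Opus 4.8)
The strategy is a bootstrapping argument identical in spirit to the one that produced Corollary \ref{c.7}: starting from the already-identified block $\dots 2332221233^*222123322\dots$, I would repeatedly invoke Lemma \ref{l.1} to discard digits $1$ (which always force a $\lambda$-value above $3.822$) and the odd-length parity considerations, and then appeal to the quantitative inequalities in Lemmas \ref{l.8} and \ref{l.replication} to kill each remaining branch until only the periodic continuation survives. Concretely, the asterisk sits at position $j$, and I already know the window $\dots 2332221233^*222123322\dots$. First I would extend one digit to the left: the candidate $2$ gives the word $\dots 22332221233^*222123322\dots$, which Lemma \ref{l.8}(xxxvi) forbids (it pushes some $\lambda_i$, $i\in\{0,7\}$, above $3.70969986>3.70969985975033$); a $1$ at that slot is excluded by Lemma \ref{l.1}(i) applied at the appropriate index; a $3$ there would create $\dots 332221\dots$, i.e. a $3$ flanked suitably, and parity/Lemma \ref{l.3}(vi)-type estimates rule it out. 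Hence the left extension must continue the periodic pattern. Similarly, extending to the right past the known block, Lemma \ref{l.8}(xxxvii) forbids $\dots 2221233^*2221233223\dots$, so the right side is forced as well.

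Iterating, each new pair of digits added on the left and on the right is pinned down by one clause of Lemma \ref{l.8} (clauses (xxxvi)--(xxxix) handle the successive left-prefixes $112\dots$ and $3212\dots$ and the right continuation) together with Lemma \ref{l.1}(i) to remove $1$'s and Lemma \ref{l.2}(v) or Lemma \ref{l.3}(vi) to remove the $3$-placements that would spike $\lambda$. The crucial last step is Lemma \ref{l.replication} itself: after the forced prefix has grown to $\dots 212332221233^*\dots$, the only two options for the next digit on the far left are $1$ and $3$; the digit $3$ is eliminated by Lemma \ref{l.8}(xxxix) (which shows $\lambda_0(\dots 3212332221233^*222123322212\dots)>3.7096998599$), and then for the digit $1$ the remaining ambiguity is resolved by Lemma \ref{l.replication}: the word $\dots 12212332221233^*222123322212\dots$ would have some $\lambda_i>3.70969985975033$ for $i\in\{-17,-15,0,13,15\}$, contradicting the hypothesis $\lambda_i(a)<3.70969985975033$ for all $|i-j|\le 17$. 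This forces the left window to be exactly $\dots 23322212332221233^*\dots$, i.e. one full extra copy of the period $3322212$ has appeared, and symmetrically the right window is $\dots 222123322212\dots$.

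Finally, reading off the resulting word $a=\dots 23322212332221233^*222123322212\dots$, the digits from position $j-14$ to position $j$ spell $2332221233^*\dots$ re-centred at $j-7$, i.e. the vicinity of position $j-7$ is $\dots 2332221233^*222123322\dots$, exactly the input hypothesis of this corollary shifted by $7$; this is the ``replication'' claimed in the statement (and it is precisely what makes the induction in Section \ref{s.replication} self-sustaining, eventually yielding the one-sided periodicity $a=\overline{3322212}33222123322$).

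The main obstacle is bookkeeping rather than conceptual: one must carefully track which index $j+k$ each application of Lemma \ref{l.1}, \ref{l.2}, \ref{l.3}, \ref{l.8} is being applied at, check that all these indices satisfy $|i-j|\le 17$ so that the hypothesis $\lambda_i(a)<3.70969985975033$ is available, and verify that the finitely many branches at each stage are genuinely exhausted by the listed lemma clauses — there is no room for an overlooked admissible word. The numerical inequalities themselves (all of the form: a specific periodic-tail continued fraction sum exceeds a given threshold, hence by Lemma \ref{l.0} so does any sequence agreeing with it on a long enough initial segment) are routine once the words are written down.
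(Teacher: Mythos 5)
Your overall plan---grow the window digit by digit, at each slot ruling out the two candidate digits that would push some $\lambda_i$ above the threshold so that the surviving digit continues the period $\overline{3322212}$, and then observe the replication at position $j-7$---is precisely the mechanism of the paper's proof. But your execution of the first step contains a genuine error.

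The first left-extension determines $a_{j-10}$, the digit immediately preceding the given window $2332221233^*222123322$. You correctly exclude $a_{j-10}=2$ via Lemma \ref{l.8}(xxxvi), but then assert that $a_{j-10}=1$ is ``excluded by Lemma \ref{l.1}(i)'' and that $a_{j-10}=3$ is ruled out by ``Lemma \ref{l.3}(vi)-type estimates,'' and conclude that ``the left extension must continue the periodic pattern.'' This is self-defeating: the periodic continuation of $\overline{3322212}$ places a $1$ at exactly slot $j-10$, so if $1$ were also excluded you would have ruled out all three digits and derived a contradiction with the hypothesis, not determined the word. In fact Lemma \ref{l.1}(i) (which needs a $3$ adjacent to a $1$) cannot bite here, since $a_{j-10}=1$ would be followed by $a_{j-9}=2$; and the candidate $a_{j-10}=3$ is correctly excluded by Lemma \ref{l.2}(v), applied at position $j-8$ up to transposition (a $3$ at $j-10$ produces the forbidden pattern $\dots 33^*23\dots$), not by Lemma \ref{l.3}(vi) (which requires a $333$ block that does not form). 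After these exclusions $a_{j-10}=1$ is exactly the digit that survives, which is what the paper records. Your later steps also conflate Lemma \ref{l.8}(xxxix) and Lemma \ref{l.replication} as alternatives at a single slot, whereas the paper applies them one extension apart (the former, paired with Lemma \ref{l.4}(xvii), pins down one digit; the latter, paired with Lemma \ref{l.5}(xxii), pins down the next), and you omit the intermediate applications of Lemma \ref{l.3}(vii), Lemma \ref{l.4}(x),(xii),(xiii), Lemma \ref{l.8}(xxxviii) and Lemma \ref{l.7}(xxx). The final observation---that the window centered at $j-7$ reproduces the hypothesis---you have correctly, and this is indeed what makes the iteration in Proposition \ref{p.1} work.
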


\begin{proof} By Lemma \ref{l.2} (v) and Lemma \ref{l.8} (xxxvi), our word must extends as 
$$a=\dots12332221233^*222123322\dots$$

By Lemma \ref{l.3} (vii) and Lemma \ref{l.8} (xxxvii), our word is forced to continue as 
$$a=\dots12332221233^*2221233222\dots$$

By Lemma \ref{l.1} (i), we have the following possibilities 
$$\dots112332221233^*2221233222\dots \quad \textrm{or} \quad \dots212332221233^*2221233222\dots$$
for the word $a$. 

By Lemma \ref{l.4} (x), (xii), these two words can continue only as 
$$\dots112332221233^*22212332221\dots \quad \textrm{or} \quad  \dots212332221233^*22212332221\dots$$

By Lemma \ref{l.4} (xiii), these words are obliged to extend as 
$$\dots112332221233^*222123322212\dots \quad \textrm{or} \quad  \dots212332221233^*222123322212\dots$$ 

However, we can apply Lemma \ref{l.8} (xxxviii) to rule out the first case, so that 
$$a=\dots212332221233^*222123322212\dots$$

By Lemma \ref{l.4} (xvii) and Lemma \ref{l.8} (xxxix), this word continues as 
$$a=\dots2212332221233^*222123322212\dots$$  

By Lemma \ref{l.replication} and Lemma \ref{l.5} (xxii), we have to extend as 
$$a=\dots22212332221233^*222123322212\dots$$

By Lemma \ref{l.7} (xxx), we are forced to continue as 
$$a=\dots322212332221233^*222123322212\dots$$

Finally, Lemma \ref{l.1} (i), (ii) and Lemma \ref{l.3} (vi) reveal that 
$$a=\dots23322212332221233^*222123322212\dots$$
\end{proof}

\section{Lower bound on the Hausdorff dimension of $M\setminus L$}\label{s.results}

\begin{proposition}\label{p.1} $L\cap (3.70969985968, 3.70969985975033) = \emptyset$
\end{proposition}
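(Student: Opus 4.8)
The plan is to argue by contradiction: suppose there is some $\mu \in L$ with $\mu \in (3.70969985968, 3.70969985975033)$, and derive an impossibility from the replication mechanism of Section \ref{s.replication}. Since $\mu \in L$, there is a bi-infinite sequence $a \in (\mathbb{N}^*)^{\mathbb{Z}}$ with $\ell(a) = \limsup_{n\to\infty}\lambda_n(a) = \mu$. First I would observe that because $\mu < 3.71 < \sqrt{12}$ (well below all the heights excluded by Lemma \ref{l.1} (i) for symbols $\geq 4$ or the symbol pattern $\dots 3^*1\dots$), the relevant tails of $a$ must lie in $\{1,2,3\}^{\mathbb{Z}}$, so that all the combinatorial lemmas of Sections \ref{s.uniqueness}--\ref{s.replication} apply; more precisely, since $\mu = \limsup \lambda_n(a)$, there are infinitely many indices $n$ with $\lambda_n(a)$ arbitrarily close to $\mu$, in particular infinitely many $n$ with $3.7096992 < \lambda_n(a) < 3.7096999$ while simultaneously $\lambda_i(a) < 3.7096999$ for all $i$ (since $m(a) = \sup_i \lambda_i(a)$ need not equal $\mu$, I will need to first pass to such an $n$ far out in the tail where the window condition $\lambda_i(a) < 3.7096999$ for $|i - n|$ bounded is guaranteed — this is the one point requiring a little care, handled by taking $n$ large enough that $\lambda_i(a)$ is already within $10^{-12}$ of $\limsup$ for all $i \geq n$).

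Next, fix such an index $n$. By Corollary \ref{c.7}, the window of $a$ around position $n$ reads $\dots 2332221233^*222123322\dots$ (up to transposition — and here I should note the transposition does not matter since reversing a sequence preserves all Markov/Lagrange values). Now the key step: since $\limsup_{m\to\infty}\lambda_m(a) = \mu < 3.70969985975033$, there are in fact infinitely many such "good windows," and I want to apply Corollary \ref{c.replication} repeatedly. Corollary \ref{c.replication} says: given that $a$ has the block $\dots 2332221233^*222123322\dots$ at some position $j$, and $\lambda_i(a) < 3.70969985975033$ for all $|i - j| \leq 17$, then the block $\overline{3322212}$ replicates one more period to the left, and moreover the position $j - 7$ again sits in the middle of a copy of $\dots 2332221233^*222123322\dots$. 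The hypothesis $\lambda_i(a) < 3.70969985975033$ is available throughout the tail since $\ell(a) = \mu$ and we chose $n$ large (so $\lambda_i(a) < \mu + \varepsilon < 3.70969985975033$ for all $i$ past some point — and we need $\lambda_i(a)$ bounded by this constant not just in the limsup but literally, which again forces working far out where $m(a)$-type violations cannot occur below position $n$; if $a$ had a large $\lambda_i$ somewhere to the left, we relocate $n$ further right so that position $j-7, j-14, \dots$ stay in the region where the bound holds).

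Iterating Corollary \ref{c.replication}, I conclude that $a$ is eventually periodic to the left of position $n$ with period $3322212$: that is, $a$ has a tail of the form $\dots\overline{3322212}\,\overline{3322212}\dots$ extending infinitely far to the left from near position $n$. But $\overline{3322212}$ is the *periodic* sequence whose Markov value is exactly $\ell = 3.70969985975\dots$; more to the point, the one-sided periodicity forces $\lambda_m(a)$ for $m \to -\infty$ to converge to $\lambda_0(\overline{3322212})$, which is one of the heights computed in the lemmas and equals $3.70969985975025\dots$ or thereabouts. The contradiction I expect to extract is that this forces $\mu = \limsup_{m\to+\infty}\lambda_m(a)$ to actually *equal* $\ell$ (the periodic value), because once $a$ is periodic on the left the only way to have $\limsup_{m\to+\infty}\lambda_m = \mu$ is for the forward orbit to also return to this configuration infinitely often, and Corollary \ref{c.replication} then propagates periodicity all the way through — making $a = \overline{3322212}$ itself, whose Lagrange value is the single number $\ell = 3.70969985975025\ldots$, which lies *below* the interval $(3.70969985968, 3.70969985975033)$? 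No — it lies inside. So the sharper contradiction must be: the replication forces $a$ to agree with $\overline{3322212}$ on an arbitrarily long *two-sided* window around infinitely many points, hence $\lambda_n(a) \to \ell$ along a subsequence, but then one must push the computation to show $\limsup \lambda_m(a) < 3.70969985968$ strictly, contradicting $\mu > 3.70969985968$. I expect the main obstacle to be precisely this last quantitative pinch: combining the replication lemma with a matching *upper* bound $\lambda_m(\dots) < 3.70969985968$ valid once enough periods of $3322212$ have been forced on both sides of every large-$\lambda$ position, which is where Lemma \ref{l.0}'s estimate $|\alpha - \beta| < 1/2^{n-1}$ on continued-fraction tails does the work of closing the $7\times 10^{-11}$ gap between the two endpoints of the excluded interval.
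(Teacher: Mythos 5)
Your skeleton is the paper's: go far out in the tail of a sequence $a$ realizing $\mu=\ell(a)$, apply Corollary \ref{c.7} at positions $n$ where $\lambda_n(a)$ is close to $\mu$, and iterate Corollary \ref{c.replication}. But the argument does not close, and the point where it unravels is a concrete numerical confusion: the value of the periodic word is $\lambda_0(\overline{33^*22212})=3.70969985967967\dots$, which lies strictly \emph{below} the left endpoint $3.70969985968$ of the interval --- that is precisely why this endpoint was chosen. The number $3.70969985975025\dots$ that you attribute to $\overline{3322212}$ is instead the Markov value of the non-periodic sequences of Proposition \ref{p.2}. Because of this mix-up you talk yourself out of the intended (and correct) contradiction, namely that the replication mechanism forces $\mu$ to equal $\lambda_0(\overline{33^*22212})<3.70969985968$, and you replace it by a ``quantitative pinch'' which you describe only vaguely and never carry out.

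There is also a structural step missing that this pinch would require. Iterating Corollary \ref{c.replication} from a single position $n$ propagates the word only to the \emph{left}, and only for finitely many steps (while the $17$-window stays in the region $i\geq n_0$ where $\lambda_i(a)<3.70969985975033$ is guaranteed; so eventual periodicity ``extending infinitely far to the left'' is not available), whereas to the right of $n$ the word is pinned down for only about $12$ symbols. Hence Lemma \ref{l.0} controls $\lambda_n(a)$ only to precision of order $2^{-11}$, hopelessly coarse against a gap of order $10^{-10}$. To get the two-sided agreement with $\overline{3322212}$ at a position where $\lambda_n(a)>3.70969985968$, one needs an extra idea, e.g.\ to pick a second position $n'\gg n$ (still beyond $n_0$) with $\lambda_{n'}(a)$ close to $\mu$ and replicate leftward from $n'$ until the forced periodic block sweeps far past $n$; or, in the transposed case, to note that replication then propagates rightward through the whole tail, making the tail literally periodic so that $\mu=\lambda_0(\overline{33^*22212})$ directly. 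Relatedly, your dismissal of the transposition is too quick: reversing a bi-infinite sequence preserves Markov values but swaps the limsups at $\pm\infty$, so it does not preserve Lagrange values; it is harmless here only because the bound $\lambda_i(a)<3.70969985975033$ holds on the entire forward tail, which is the case one must argue.
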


\begin{proof} Suppose that $\ell\in L\cap (3.70969985968, 3.70969985975033)$ and let $a\in\{1,2,3\}^{\mathbb{Z}}$ be a sequence such that $\ell=\limsup\limits_{n\to\infty}\lambda_n(a)$. By repeatedly applying Corollaries \ref{c.7} and \ref{c.replication}, we would deduce that 
$$\ell=\lambda_0(\overline{33^*22212})= 3.709699859679\dots < 3.70969985968$$ 
a contradiction. 
\end{proof}

\begin{proposition}\label{p.2}$C=\{\lambda_0(\overline{3322212}33^*2221233222122121212\theta): \theta\in\{1,2\}^{\mathbb{N}}\}$ is contained in $M\cap(3.70969985975024, 3.70969985975028)$.
\end{proposition}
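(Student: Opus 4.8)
The plan is to establish the two-sided inclusion $C\subset M\cap(3.70969985975024, 3.70969985975028)$ by first showing that every element of $C$ is a Markov value (i.e., the quantity $\lambda_0$ of the relevant bi-infinite sequence is actually the supremum over all shifts), and then pinning down the numerical interval by the usual continued-fraction comparison estimates via Lemma \ref{l.0}.

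\textbf{Step 1: the relevant sequence and where its Markov value is attained.}
Fix $\theta\in\{1,2\}^{\mathbb{N}}$ and let $a=a(\theta)$ denote the bi-infinite sequence $\overline{3322212}\,3\,3^*\,2221233222122121212\,\theta$, with the asterisk marking position $0$. I would first argue that $m(a)=\lambda_0(a)$, i.e. that $\lambda_i(a)<\lambda_0(a)$ for every $i\neq 0$. This splits into three regimes. For $i$ in the deep left tail (inside the periodic block $\overline{3322212}$) the value $\lambda_i$ is governed by $\lambda_0(\overline{33^*22212})=3.709699859679\dots$ computed in Proposition \ref{p.1}, which is strictly below the target interval, so these shifts are harmless; more precisely one uses Lemma \ref{l.0} to see that $\lambda_i(a)$ for $i$ inside the periodic tail differs from $\lambda_0(\overline{33^*22212})$ by an amount controlled by $1/2^{n-1}$ where $n$ measures the distance to the junction, and all these stay below $3.70969985968$. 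For $i$ in the right tail (inside $\theta\in\{1,2\}^{\mathbb{N}}$) one invokes Perron's bound $\lambda_i\leq\sqrt{12}=3.464\dots<\lambda_0(a)$ once the window around position $i$ lies entirely in $\{1,2\}$; the finitely many intermediate positions near the junction of $2121212$ with $\theta$ are handled individually by the estimates in Lemmas \ref{l.1}--\ref{l.7} (these are exactly the kind of "$\lambda_i<3.709\dots$" bounds the earlier sections supply). The only genuinely delicate shifts are the $O(1)$ positions within the core block $3^*2221233222122121212$ and the junction with the periodic tail; for each such $i$ one writes down the first several continued-fraction digits of $[a_i;a_{i+1},\dots]$ and $[0;a_{i-1},\dots]$ on both sides and checks $\lambda_i(a)<3.70969985975024$ using Lemma \ref{l.0} to truncate. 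I expect this bookkeeping to be the main obstacle: it is not deep, but one must be careful that no shift $\lambda_i$ sneaks above $\lambda_0$, and the sequence is long enough that several near-maximal competitors (e.g. around the two copies of $1233222$) need explicit comparison.

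\textbf{Step 2: the numerical bracket on $\lambda_0(a)$.}
Once $m(a)=\lambda_0(a)$ is known, it remains to show $\lambda_0(a)\in(3.70969985975024, 3.70969985975028)$ uniformly in $\theta$. Write $\lambda_0(a)=[3;3,2,2,2,1,2,3,3,2,2,2,1,2,2,1,2,1,2,1,2,\theta_1,\theta_2,\dots]+[0;2,1,2,2,2,3,3,\overline{2,1,2,2,2,3,3}]$. The second summand is a fixed quadratic surd (the periodic left tail), computed once. For the first summand, the digits up to the start of $\theta$ are fixed; since $\theta_1\in\{1,2\}$, Lemma \ref{l.0} gives that changing the tail beyond position $20$ moves the value by at most $1/2^{19}$, and in fact the parity in Lemma \ref{l.0} shows $[3;\dots,1,2,1,2,\overline{1}]\leq[3;\dots,1,2,1,2,\theta]\leq[3;\dots,1,2,1,2,\overline{2}]$ (or the reverse, depending on the parity of the position), so the whole family $C$ lies in an interval of length $<1/2^{18}\approx 4\times10^{-6}$. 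I would compute the two extreme values $\lambda_0(\overline{3322212}33^*2221233222122121212\,\overline{1})$ and $\lambda_0(\overline{3322212}33^*2221233222122121212\,\overline{2})$ explicitly to, say, $14$ decimal places, verify both land strictly inside $(3.70969985975024, 3.70969985975028)$, and conclude by monotonicity that all of $C$ does. This is a finite computation of the same flavor as those in Lemmas \ref{l.8} and \ref{l.replication}.

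\textbf{Step 3: assembling the conclusion.}
Combining Steps 1 and 2: for each $\theta$, $a(\theta)$ is a bi-infinite sequence over $\{1,2,3\}$ with $m(a(\theta))=\lambda_0(a(\theta))\in(3.70969985975024, 3.70969985975028)$, hence $\lambda_0(a(\theta))\in M$ and lies in the stated interval. Therefore $C\subset M\cap(3.70969985975024, 3.70969985975028)$, which is the claim. The map $\theta\mapsto\lambda_0(a(\theta))$ being (strictly) monotone in the tail also shows, as a byproduct, that $C$ is a Cantor set bi-Lipschitz equivalent to the regular Cantor set $\{1,2\}^{\mathbb{N}}$ in continued-fraction coordinates — this is what will later feed the Hausdorff-dimension lower bound of Theorem \ref{t.1}, once Proposition \ref{p.1} is used to show $C\cap L=\emptyset$.
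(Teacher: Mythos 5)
The paper's own proof of Proposition \ref{p.2} is the one-liner ``This is a straightforward calculation,'' and your plan is exactly the intended verification --- confirm that $\lambda_0(a(\theta))$ is the supremum over all shifts and then bracket it numerically by the two extremes $\theta=\overline{1}$, $\theta=\overline{2}$ and invoke monotonicity --- so the approach is the same. Two details of your write-up need fixing before the calculation is actually carried out. First, with the paper's convention the asterisk marks $a_0$, so the extra $3$ you placed at the start of the positive tail actually belongs to the negative tail: $\lambda_0(a)=[3;2,2,2,1,2,3,3,2,2,2,1,2,2,1,2,1,2,1,2,\theta_1,\theta_2,\dots]+[0;3,\overline{2,1,2,2,2,3,3}]$, not $[3;3,\dots]+[0;2,1,\dots]$. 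Second, in Step~1 the crude bound $|\lambda_{-7k}(a)-\lambda_0(\overline{33^*22212})|<1/2^{n-1}$ is hopelessly weak for small $k$ (for $k=1,2$ it is of order $10^{-6}$, while the margin up to $3.70969985968$ is of order $10^{-13}$); what actually controls these shifts is the sign criterion in the first bullet of Lemma~\ref{l.0} --- the first discrepancy with the periodic word $\overline{33^*22212}$ occurs at index $13+7k$ of the continued fraction, and one must compare the size of this deviation (of order $q_{13+7k}^{-2}$) with the deviation of $\lambda_0(a)$ itself from the periodic value at index $13$ (of order $q_{13}^{-2}$), the latter dominating because the denominators grow geometrically.
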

\begin{proof} This is a straightforward calculation. 
\end{proof}

The previous two propositions imply that:  

\begin{theorem}\label{t.A} $C=\{\lambda_0(\overline{3322212}33^*2221233222122121212\theta): \theta\in\{1,2\}^{\mathbb{N}}\}\subset M\setminus L$. 
\end{theorem}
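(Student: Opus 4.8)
The plan is to deduce Theorem \ref{t.A} directly from Propositions \ref{p.1} and \ref{p.2} together with the local-uniqueness/replication machinery of Corollaries \ref{c.7} and \ref{c.replication}. First I would observe that Proposition \ref{p.2} already gives the inclusion $C\subset M$, since every element of $C$ is by definition a value $\lambda_0$ of an explicit bi-infinite sequence in $\{1,2,3\}^{\mathbb{Z}}$ lying in $\{1,2\}^{\mathbb{Z}}$ on the far right, and one checks (as Proposition \ref{p.2} asserts) that this $\lambda_0$ is in fact the \emph{supremum} $\sup_{n}\lambda_n$ of the sequence, hence a genuine Markov value; the point is that the prescribed word $\overline{3322212}33^*2221233222122121212$ followed by any $\theta\in\{1,2\}^{\mathbb{N}}$ never produces a shift with $\lambda_n$ exceeding $3.70969985975028$, while $\lambda_0$ itself sits in the window $(3.70969985975024,3.70969985975028)$.

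The heart of the argument is then to show $C\cap L=\emptyset$. Since each element $\mu\in C$ satisfies $\mu\in(3.70969985975024,3.70969985975028)\subset(3.70969985968,3.70969985975033)$, this is immediate from Proposition \ref{p.1}: the interval $(3.70969985968,3.70969985975033)$ was proved there to be disjoint from the Lagrange spectrum. Combining the two points, $C\subset M$ and $C\cap L=\emptyset$, gives $C\subset M\setminus L$, which is exactly the statement. So at the level of logical structure the proof is a one-line corollary: "By Proposition \ref{p.2}, $C\subset M\cap(3.70969985975024,3.70969985975028)$; by Proposition \ref{p.1} this interval misses $L$; hence $C\subset M\setminus L$."

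The substantive content that makes this work — and the step I would flag as the real obstacle, already discharged in the preceding sections — is Proposition \ref{p.1}, whose proof invokes Corollaries \ref{c.7} and \ref{c.replication} iteratively: any sequence realizing a Lagrange value in the given window must, by the local-uniqueness Corollary \ref{c.7}, contain the word $2332221233^*222123322$ near the $\limsup$-achieving positions, and then the replication Corollary \ref{c.replication} forces the word to regenerate $3322212$-blocks to the left indefinitely, pinning the Lagrange value down to $\lambda_0(\overline{33^*22212})=3.709699859679\dots$, which falls just below the window. This is precisely the dynamical "the orbit cannot return to $q_\alpha$ without paying the cost $\nu>\mu$" picture of Figure \ref{f.1}, and it is where all the continued-fraction estimates of Lemmas \ref{l.1}--\ref{l.replication} are consumed. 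In writing up Theorem \ref{t.A} itself there is nothing further to prove beyond citing these two propositions, so I would keep the proof short and defer all the work to the sections already completed.

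\begin{proof}
By Proposition \ref{p.2}, we have $C\subset M\cap(3.70969985975024, 3.70969985975028)$. On the other hand, since
$$(3.70969985975024, 3.70969985975028)\subset (3.70969985968, 3.70969985975033),$$
Proposition \ref{p.1} ensures that $C\cap L=\emptyset$. Therefore $C\subset M\setminus L$.
\end{proof}
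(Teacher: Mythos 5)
Your proof is correct and is exactly the argument the paper has in mind: Theorem \ref{t.A} is stated there with the single line ``The previous two propositions imply that,'' meaning Proposition \ref{p.2} places $C$ in $M$ within the small window and Proposition \ref{p.1} shows that window misses $L$. Your write-up just makes that one-line deduction explicit.
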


By Jenkinson--Pollicott work \cite{JP16}, a consequence of Theorem \ref{t.A} is the following slightly stronger version of Theorem \ref{t.1}: 

\begin{corollary} The Hausdorff dimension of $(M\setminus L)\cap (3.7096, 3.7097)$ is 
$$\geq 0.5312805062772051416244686473684717854930591090183\dots$$
\end{corollary}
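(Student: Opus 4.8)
The plan is to read off the estimate directly from Theorem~\ref{t.A} once one recognizes the Cantor set $C$ appearing there as a smooth image of the classical Cantor set
$$E_2:=\{[0;a_1,a_2,a_3,\dots]:a_i\in\{1,2\}\text{ for all }i\}$$
of continued fractions with partial quotients in $\{1,2\}$, and then to quote the high-precision value of $HD(E_2)$ computed by Jenkinson--Pollicott in \cite{JP16}.

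First I would reduce the claim to the computation of $HD(C)$. By Theorem~\ref{t.A} we have $C\subset M\setminus L$, and by Proposition~\ref{p.2} we have $C\subset(3.70969985975024,\,3.70969985975028)\subset(3.7096,3.7097)$; since Hausdorff dimension is monotone under inclusion, $HD\big((M\setminus L)\cap(3.7096,3.7097)\big)\ge HD(C)$, and it remains to show that $HD(C)$ equals the displayed number.

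Next I would identify $C$ with a smooth image of $E_2$. Expanding the definition of $C$, every one of its elements has the form
$$\lambda_0\big(\overline{3322212}\,33^*2221233222122121212\,\theta\big)=\rho+[3;w,\theta_1,\theta_2,\dots],\qquad\theta=(\theta_n)_{n\ge1}\in\{1,2\}^{\mathbb{N}},$$
where $\rho:=[0;3,\overline{2,1,2,2,2,3,3}]$ is a fixed number coming from the frozen left tail $\overline{3322212}\,3$, and $w:=(2,2,2,1,2,3,3,2,2,2,1,2,2,1,2,1,2,1,2)$ is a fixed finite word. Letting $a/c$ and $b/d$ be the second-to-last and last convergents of the finite continued fraction $[3;w]$, one has $a,b,c,d\in\mathbb{Z}_{>0}$, $|ad-bc|=1$, and
$$[3;w,\theta_1,\theta_2,\dots]=\frac{a\,t+b}{c\,t+d},\qquad t:=[0;\theta_1,\theta_2,\dots]\in E_2,$$
so that $C=\varphi(E_2)$ with $\varphi(t):=\rho+\frac{at+b}{ct+d}$. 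On the compact interval $[0,1]\supset E_2$ the denominator satisfies $d\le ct+d\le c+d$, hence $\varphi$ is $C^1$ there with $|\varphi'(t)|=|ad-bc|/(ct+d)^2$ bounded above by $d^{-2}$ and below by $(c+d)^{-2}$; thus $\varphi$ restricts to a bi-Lipschitz homeomorphism of $[0,1]$ onto its image, and bi-Lipschitz invariance of Hausdorff dimension gives $HD(C)=HD(\varphi(E_2))=HD(E_2)$.

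Finally I would invoke the main numerical output of Jenkinson--Pollicott \cite{JP16}, obtained via their transfer-operator (``thermodynamic'') scheme, namely
$$HD(E_2)=0.5312805062772051416244686473684717854930591090183\dots;$$
combined with the previous two paragraphs this yields the asserted lower bound. I do not expect any genuinely hard step here: all the substantive work has already been carried out in Sections~\ref{s.uniqueness}--\ref{s.results} (which produce $C\subset M\setminus L$ together with its localization in $(3.7096,3.7097)$) and in \cite{JP16} (which supplies the precise value). The only point deserving a word of care is the verification that the M\"obius change of coordinates $\varphi$ is bi-Lipschitz near $E_2$ — that is, that $ct+d$ stays bounded away from $0$ — which is immediate from the positivity of the continuants $c,d$.
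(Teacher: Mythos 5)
Your argument is correct and is exactly the argument the paper has in mind: the corollary is presented in the paper as a one-line consequence of Theorem \ref{t.A} and the Jenkinson--Pollicott computation of $HD(E_2)$, and your write-up simply makes explicit the M\"obius--bi-Lipschitz identification $C=\varphi(E_2)$ that justifies $HD(C)=HD(E_2)$.
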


In Appendix \ref{s.Cantor-Cusick}, we refine our proof of Theorem \ref{t.1} to determine the largest interval $J$ containing the set $C$ from Proposition \ref{p.2} with $J\cap L=\emptyset$, to compute the largest element $\Upsilon$ of $(M\setminus L)\cap J$, and to exhibit a Cantor set $\Omega$ of continued fraction expansions such that $HD(\Omega) = HD((M\setminus L)\cap J)$. 

For now, we consider that the discussion of Theorem \ref{t.1} is complete and we move on to the discussion of the proof of Theorem \ref{t.prrova}. 

\section{Key lemma towards Theorem \ref{t.prrova}}\label{s.key}

Denote by 
$$K(A) = \{[0;\gamma]:\gamma\in\Sigma^+(A)\} \quad \textrm{and} \quad K^-(A) = \{[0;\delta^t]:\delta\in\Sigma^-(A)\}$$ 
the Cantor sets of the real line naturally associated to a subshift of finite type $\Sigma(A)\subset \Sigma$. 

Fix $\Sigma(B)\subset\Sigma(C)$ two transitive subshifts of finite type of $\Sigma=(\mathbb{N}^*)^{\mathbb{Z}}$ such that $K(B)=K^-(B)$ and $K(C) = K^-(C)$, i.e., $B$ and $C$ are \emph{symmetric}. 

Denote by $\underline{a}=(a_n)_{n\in\mathbb{Z}}\in\Sigma(C)$ be a sequence with $m(\underline{a}) = f(\underline{a}) = m\in M$ and   
\begin{equation}\label{e.condition-m} 
m>\max\limits_{\substack{\beta\in \Sigma^+(B) \\ \alpha\in \Sigma^-(C) \\ \eta \textrm{ predecessor in }\Sigma(B) \textrm{ of }\beta}} \left([0;\beta]^{-1}+[0;(\alpha\eta)^t]\right):=c(B,C)
\end{equation}

\begin{lemma}\label{l.key} Suppose that, for every $k\in\mathbb{N}$, there exists $n_k,m_k\geq k$ such that: 
\begin{itemize}
\item[(i)] the half-infinite sequence $\dots a_0^*\dots a_{n_k}$ can be completed into two bi-infinite sequences $\underline{\theta}_k^{(1)}=\dots a_0^*\dots a_{n_k}\underline{\alpha_k}$ and $\underline{\theta}_k^{(2)}=\dots a_0^*\dots a_{n_k}\underline{\beta_k}$ so that $K(B)\cap [[0;\underline{\alpha_k}], [0;\underline{\beta_k}]]\neq\emptyset$;
\item[(ii)] the half-infinite sequence $a_{-m_k}\dots a_0^*\dots$ can be completed into two bi-infinite sequences $\underline{\theta}_k^{(3)}=\underline{\gamma_k}a_{-m_k}\dots a_0^*\dots$ and $\underline{\theta}_k^{(4)} = \underline{\delta_k}a_{-m_k}\dots a_0^*\dots$ so that $K(B)\cap [[0;\underline{\gamma_k}^t], [0;\underline{\delta_k}^t]]\neq\emptyset$; 
\item[(iii)] $\lim\limits_{k\to\infty}m(\underline{\theta}_k^{(j)}) = m$ for each $1\leq j\leq 4$.
\end{itemize} 
Then, $m\in L$. 
\end{lemma}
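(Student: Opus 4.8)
The plan is to produce, out of the data in (i)--(iii), a single bi-infinite sequence $\underline{w}\in\Sigma$ with $\ell(\underline{w})=m$, which is exactly the assertion $m\in L$; this is the continued-fraction incarnation of the shadowing construction sketched after Figure~\ref{f.2}. Fix an increasing sequence $k_1<k_2<\cdots\to\infty$ and, for each $j$, let $W_j:=a_{-m_{k_j}}\cdots a_{n_{k_j}}$ be the corresponding long finite subword of $\underline{a}$ (it contains the zeroth position). Hypothesis (i) for $k_j$ gives $\beta^{(j)}\in\Sigma^+(B)$ with $[0;\beta^{(j)}]\in[[0;\underline{\alpha_{k_j}}],[0;\underline{\beta_{k_j}}]]$; since the $B$-cylinders cut out by longer and longer prefixes of $\beta^{(j)}$ shrink to the point $[0;\beta^{(j)}]$, a suitable finite prefix of $\beta^{(j)}$ already has its $B$-cylinder contained in that interval, so $W_j$ may be followed by this prefix and then \emph{arbitrarily} continued inside $\Sigma(B)$. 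Symmetrically, using $K(B)=K^-(B)$, hypothesis (ii) lets one precede $W_j$ by a finite word reading leftwards into $\Sigma(B)$ that squeezes $[[0;\underline{\gamma_{k_j}}^t],[0;\underline{\delta_{k_j}}^t]]$. I would then concatenate these pieces for $j=1,2,\dots$, using the \emph{transitivity} of $\Sigma(B)$ to join — by admissible $B$-paths made as long as convenient — the right transition out of $W_j$ to the left transition into $W_{j+1}$, and close off the far left by a periodic orbit of $\Sigma(B)$. Call the result $\underline{w}$ and let $p_j$ be the position of $\underline{w}$ carrying the zeroth symbol of the $W_j$-copy of $\underline{a}$, so $p_j\to+\infty$.

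The core of the argument is then to bound $\lambda_n(\underline{w})$ for \emph{every} $n$. When $n$ lies in the right half of a $W_j$-copy, the forward tail of $\underline{w}$ at $n$ differs from those of $\underline{\theta}_{k_j}^{(1)},\underline{\theta}_{k_j}^{(2)}$ only beyond position $n_{k_j}$, while the backward tails coincide with that of $\underline{a}$ for at least $m_{k_j}/2$ symbols; by the monotonicity of the relevant Möbius maps together with the squeezing property, $\lambda_n(\underline{w})$ lies between $\lambda_n(\underline{\theta}_{k_j}^{(1)})$ and $\lambda_n(\underline{\theta}_{k_j}^{(2)})$ up to an error $O(2^{-m_{k_j}/2})$ by Lemma~\ref{l.0}; for $n$ in the left half one argues symmetrically with $\underline{\theta}_{k_j}^{(3)},\underline{\theta}_{k_j}^{(4)}$. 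In all cases $\lambda_n(\underline{w})\leq\max_{1\leq i\leq4}m(\underline{\theta}_{k_j}^{(i)})+o(1)=m+o(1)$ by (iii). When $n$ lies in a transition block (or the far-left periodic tail), the forward tail of $\underline{w}$ at $n$ agrees on a long prefix with some $\beta\in\Sigma^+(B)$, while the backward tail at $n$ is, up to a negligible error, of the form $[0;(\alpha\eta)^t]$ with $\alpha\in\Sigma^-(C)$ and $\eta$ a predecessor in $\Sigma(B)$ of $\beta$; hence by \eqref{e.condition-m} one gets $\lambda_n(\underline{w})\leq c(B,C)+O(2^{-L})<m$ once the surrounding $B$-blocks have length $\geq L$, which we arranged. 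The finitely many positions straddling a $W_j$-block and an adjacent $B$-block are covered either by the ``between the $\underline{\theta}^{(i)}$'s'' estimate (for the single abutting position on each side, where the backward tail is still the reversed $\underline{a}$-tail) or by the $c(B,C)$ estimate. Assembling these bounds, $\lambda_n(\underline{w})\leq m+\varepsilon_n$ with $\varepsilon_n\to0$ as $n\to\infty$, so $\limsup_{n\to\infty}\lambda_n(\underline{w})\leq m$; and since $\underline{w}$ agrees with $\underline{a}$ on a block of length $\geq 2k_j$ around $p_j$, Lemma~\ref{l.0} gives $\lambda_{p_j}(\underline{w})>\lambda_0(\underline{a})-2^{-k_j}=m-2^{-k_j}$, whence $\limsup_{n\to\infty}\lambda_n(\underline{w})=m$ and $m=\ell(\underline{w})\in L$.

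The part I expect to be the main obstacle is the bookkeeping at the $W$-to-$B$ junctions: one must verify that truncating the ideal infinite $B$-tail produced by (i)/(ii) to a finite prefix does not spoil the squeezing (this is exactly where the shrinking of $B$-cylinders is used), and that the mismatch between the backward tails of $\underline{w}$ and of the $\underline{\theta}^{(i)}$'s involves only symbols far from $n$, so it contributes an error tending to $0$ rather than an $O(1)$ term. A second point requiring care is the systematic use of the symmetry $K(B)=K^-(B)$ to pass between prepending and appending $B$-blocks and to transfer the right-junction estimates to the left-junction ones, together with the transitivity of $\Sigma(B)$, which is precisely what allows the finite transition out of $W_j$ and into $W_{j+1}$ to be linked inside $\Sigma(B)$ while keeping all the intervening values below $c(B,C)<m$.
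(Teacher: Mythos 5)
Your argument is correct and rests on the same core mechanism as the paper's: splice long $\Sigma(B)$-passages onto the central blocks $a_{-m_k}\dots a_0^*\dots a_{n_k}$ of $\underline{a}$ using transitivity of $\Sigma(B)$ and the squeezing data in (i)--(ii); then control $\lambda_n$ inside a central block by comparison with the $\underline{\theta}^{(i)}$'s via Lemma~\ref{l.0} and hypothesis (iii), and inside a $B$-passage by the condition $c(B,C)<m$ together with $K(B)=K^-(B)$, $K(C)=K^-(C)$ and $\Sigma(B)\subset\Sigma(C)$. Where you depart from the paper is the closing step: the paper builds, for each $k$, a single \emph{periodic} sequence $P_k$ by cyclically concatenating $a_0^*\dots a_{n_k}(\mu\ast\nu)_k a_{-m_k}\dots a_{-1}$, shows $m(P_k)\to m$, and concludes via the classical fact that $L$ is the closure of the Markov values of periodic sequences (Theorem 2, Chapter 3 of \cite{CF}); you instead string the blocks $W_{k_1},W_{k_2},\dots$ with progressively longer $B$-transitions into one bi-infinite word $\underline{w}$ and verify $\ell(\underline{w})=m$ directly from the definition. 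Both routes are sound; the periodic version is lighter on bookkeeping since each $m(P_k)$ is a supremum over a single finite period (so backward tails are controlled automatically), while yours is self-contained in that it avoids the periodic-closure characterization of $L$. One small imprecision to flag: a long finite prefix of $\beta^{(j)}$ need not have its $B$-cylinder \emph{contained} in $[[0;\underline{\alpha_{k_j}}],[0;\underline{\beta_{k_j}}]]$ when $[0;\beta^{(j)}]$ happens to sit at an endpoint of that interval; however, what you actually use --- and all that the paper uses to obtain \eqref{e.2} --- is the quantitative $O(2^{-k})$ control from Lemma~\ref{l.0}, which holds regardless of that containment.
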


\begin{proof} By Theorem 2 in Chapter 3 of Cusick--Flahive book \cite{CF}, our task is reduced to show that $m=\lim\limits_{k\to\infty}m(P_k)$ where $P_k$ is a sequence of periodic points in $\Sigma$.  

For each $k\in\mathbb{N}$, let us take $\underline{\mu_k}, \underline{\nu_k}\in\Sigma(B)$ with 
\begin{equation}\label{e.1}
[0;\underline{\alpha_k}]\leq [0;\pi^+(\underline{\mu_k})]\leq [0;\underline{\beta_k}] \quad \textrm{ and } \quad [0;\underline{\gamma_k}^t]\leq [0;(\pi^+(\underline{\nu_k}))^t]\leq [0;\underline{\delta_k}^t]
\end{equation} 
The transitivity of the subshift of finite type $\Sigma(B)$ permits to choose finite subword $(\mu\ast\nu)_k$ of an element of $\Sigma(B)$ connecting the initial segment $\mu_k^+$ of $\pi^+(\underline{\mu_k})$ of length $k$ with the final segment $\nu_k^-$ of $\pi^-(\underline{\nu_k})$ of length $k$, say 
$$(\mu\ast\nu)_k = \mu_k^+ w_k \nu_k^-$$

In this setting, consider the periodic point $P_k\in \Sigma$ obtained by infinite concatenation of the finite block 
$$a_0^*\dots a_{n_k}(\mu\ast\nu)_k a_{-m_k}\dots a_{-1}$$

By \eqref{e.1} and Lemma \ref{l.0}, for each $-1-m_k\leq j\leq n_k+1$, one has  
\begin{equation}\label{e.2}
f(\sigma^j(P_k))\leq \max\{m(\underline{\theta}_k^{(1)}), m(\underline{\theta}_k^{(2)}), m(\underline{\theta}_k^{(3)}), m(\underline{\theta}_k^{(4)})\} + \frac{1}{2^{k-2}}
\end{equation}

By definition of $c(B,C)$, the facts that $\Sigma(B)\subset \Sigma(C)$ and $B, C$ are symmetric, and Lemma \ref{l.0}, one has  
\begin{equation}\label{e.3}
f(\sigma^j(P_k))<c(B,C)+\frac{1}{2^{k-1}}
\end{equation}
for each $j$ corresponding to a non-extremal position in $(\mu\ast\nu)_k$.  

Also, by Lemma \ref{l.0}, we know that 
\begin{equation}\label{e.4}
f(P_k)>f(\underline{a})-\frac{1}{2^{k-2}} = m-\frac{1}{2^{k-2}}
\end{equation}

It follows from \eqref{e.condition-m}, \eqref{e.2}, \eqref{e.3} and \eqref{e.4} that 
$$m-\frac{1}{2^{k-2}}\leq m(P_k)\leq \max\{m(\underline{\theta}_k^{(1)}), m(\underline{\theta}_k^{(2)}), m(\underline{\theta}_k^{(3)}), m(\underline{\theta}_k^{(4)})\} + \frac{1}{2^{k-2}}$$ 
for all $k$ sufficiently large. In particular, $m=\lim\limits_{k\to\infty} m(P_k)$ thanks to our assumption in item (iii). This completes the argument. 
\end{proof}

\begin{remark} As it can be seen from the proof of this lemma, the hypothesis that $B$ and $C$ are symmetric can be relaxed to $K(B)\cap K^-(B)\neq \emptyset$ and/or $K(C)\cap K^-(C)\neq \emptyset$ after replacing \eqref{e.condition-m} by appropriate conditions on $m$. 
\end{remark}

\section{Rigorous estimates for $HD(M\setminus L)$}\label{s.prrova}

In this section, we use Hensley's estimates in \cite{He} for the Hausdorff dimensions of the Cantor sets $K(\{1,2\})$, $K(\{1,2,3\})$ and $K(\{1,2,3,4\})$ in order to \emph{rigorously} prove Theorem \ref{t.prrova}.

\subsection{Description of $(M\setminus L)\cap(\sqrt{10},\sqrt{13})$}\label{ss.basic-argument}

Let $m\in M\setminus L$ with $\sqrt{10}<m<\sqrt{13}$. In this setting, $m=m(\underline{a})=f(\underline{a})$ for a sequence $\underline{a}=(\dots, a_{-1}, a_0, a_1, \dots)\in \{1,2\}^{\mathbb{Z}}=:\Sigma(C)$ (see, e.g., Lemma 7 in Chapter 1 of Cusick--Flahive book \cite{CF}). 

Consider the (complete) subshift $\Sigma(B)=\{11,22\}^{\mathbb{Z}}\subset\Sigma(C)$. Note that $B$ and $C$ are symmetric, and the quantity $c(B,C)$ introduced above is bounded by 
\begin{equation}\label{e.c-sqrt12}
c(B,C) \leq \max\limits_{\substack{\beta\in K(B) \\ \alpha\in K(C)}} (\beta^{-1}+\alpha) = [2;\overline{2,2}]+[0;\overline{1,2}] = \sqrt{2} + \sqrt{3} <\sqrt{10} < m
\end{equation} 
thanks to Lemma \ref{l.0}. 

Fix $\varepsilon>0$ such that $[m-2\varepsilon, m+2\varepsilon]\cap L=\emptyset$ and take $N\in\mathbb{N}$ with $f(\sigma^j(\underline{a})) < m-2\varepsilon$ for all $|j|\geq N$.

For each $n\in\mathbb{N}^*$, resp. $-n\in\mathbb{N}^*$, let us consider the possible continuations of $\dots a_0^*\dots a_n$, resp. $a_{n}\dots a_0^*\dots$ into sequences in $\Sigma(C)$ whose Markov values are attained in a position $|j|\leq N$. 

Of course, for every $n\in\mathbb{Z}\setminus\{0\}$, we have the following cases:
\begin{itemize} 
\item[(a)] there is an unique continuation (prescribed by $\underline{a}$); 
\item[(b)] there are two distinct continuations given by half-infinite sequences $\alpha_n$ and $\beta_n$; in this context, one has two subcases:
\begin{itemize}
\item[(b1)] the interval $I_n$ determined by $[0;\alpha_{n}]$ and $[0;\beta_{n}]$, resp. $[0; (\alpha_n)^t]$ and $[0; (\beta_n)^t]$, when $n>0$, resp. $n<0$, is \emph{disjoint} from $K(B)$; 
\item[(b2)] the interval $I_n$ determined by $[0;\alpha_{n}]$ and $[0;\beta_{n}]$, resp. $[0; (\alpha_n)^t]$ and $[0; (\beta_n)^t]$, when $n>0$, resp. $n<0$, \emph{intersects} $K(B)$;
\end{itemize}
\end{itemize} 

\begin{proposition}\label{p.M-L-constraint} There exists $k\in\mathbb{N}$ such that: 
\begin{itemize}
\item either for all $n\geq k$ the subcase (b2) doesn't occur;
\item or for all $n\leq -k$ the subcase (b2) doesn't occur. 
\end{itemize}
\end{proposition}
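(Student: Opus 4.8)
The plan is to argue by contradiction using Lemma~\ref{l.key}. Suppose the conclusion fails: then for every $k\in\mathbb{N}$ there is some $n_k\geq k$ for which subcase (b2) occurs on the right, and some $m_k\geq k$ for which subcase (b2) occurs on the left (after relabelling, I take $m_k>0$ and read positions $a_{-m_k}$). By definition of (b2), the half-infinite word $\dots a_0^*\dots a_{n_k}$ admits two continuations $\underline{\alpha_k},\underline{\beta_k}$ into sequences in $\Sigma(C)$ whose Markov values are attained in a position $|j|\leq N$, and the interval $I_{n_k}$ determined by $[0;\underline{\alpha_k}]$ and $[0;\underline{\beta_k}]$ meets $K(B)$; similarly on the left with $\underline{\gamma_k},\underline{\delta_k}$ and the interval determined by $[0;\underline{\gamma_k}^t]$, $[0;\underline{\delta_k}^t]$ meeting $K(B)$. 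This furnishes exactly hypotheses (i) and (ii) of Lemma~\ref{l.key}, with $B,C$ the symmetric subshifts fixed above, once we check condition~\eqref{e.condition-m}, which holds here since \eqref{e.c-sqrt12} gives $c(B,C)\leq \sqrt{2}+\sqrt{3}<\sqrt{10}<m$.

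The remaining point is hypothesis (iii): that $m(\underline{\theta}_k^{(j)})\to m$ for each of the four completed bi-infinite sequences. First I would note that each $\underline{\theta}_k^{(j)}$ contains the block $a_{-N}\dots a_0^*\dots a_N$ (indeed the block $a_{-m_k}\dots a_0^*\dots a_{n_k}$ with $n_k,m_k\geq k\geq N$), so by Lemma~\ref{l.0} we get $\lambda_0(\underline{\theta}_k^{(j)}) > f(\underline{a}) - 2^{-(k-2)}$, hence $\liminf_k m(\underline{\theta}_k^{(j)})\geq m$. For the reverse inequality, I would use the way the continuations were selected: the completions $\underline{\alpha_k},\underline{\beta_k}$ (resp. $\underline{\gamma_k},\underline{\delta_k}$) were chosen precisely among those sequences in $\Sigma(C)$ whose Markov value is attained at a position $|j|\leq N$. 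For positions $|j|\leq N$ in $\underline{\theta}_k^{(j)}$, the relevant window $a_{j-k}\dots a_{j+k}$ agrees with that of $\underline{a}$ for $k$ large (again since $n_k,m_k\geq k$), so $\lambda_j(\underline{\theta}_k^{(j)})\leq f(\sigma^j(\underline{a}))+2^{-(k-2)}\leq m + 2^{-(k-2)}$; for positions $|j|>N$, the choice of completion (Markov value attained within $|j|\leq N$) together with $[m-2\varepsilon,m+2\varepsilon]\cap L=\emptyset$ and the upper bound $f(\sigma^j(\underline{a}))<m-2\varepsilon$ for $|j|\geq N$ forces $\lambda_j(\underline{\theta}_k^{(j)})<m-\varepsilon$ for $k$ large. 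Combining, $m(\underline{\theta}_k^{(j)})\leq m+2^{-(k-2)}$, so $m(\underline{\theta}_k^{(j)})\to m$, establishing (iii).

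With all three hypotheses verified, Lemma~\ref{l.key} yields $m\in L$, contradicting $m\in M\setminus L$. Hence the Proposition holds. I expect the main obstacle to be the careful bookkeeping in hypothesis (iii): one must be precise about what ``continuations whose Markov values are attained in a position $|j|\leq N$'' means and why it rules out large contributions $\lambda_j$ at positions $|j|>N$ in the glued sequences $\underline{\theta}_k^{(j)}$ — this is where the hypothesis $[m-2\varepsilon,m+2\varepsilon]\cap L=\emptyset$ is genuinely used, encoding that $\underline{a}$ does not have $m$ as a Lagrange value so its orbit only rarely climbs near height $m$. A secondary technical care is making sure the two indices $n_k$ and $m_k$ can be taken simultaneously large (one for the right, one for the left) even though the failure of the dichotomy only gives, a priori, infinitely many bad $n$ on one side for each fixed attempt; but the negation of ``$\exists k$ such that ($\forall n\geq k$ no (b2) on the right) or ($\forall n\leq -k$ no (b2) on the left)'' is exactly ``$\forall k$, (b2) occurs at some $n\geq k$ on the right) and (b2) occurs at some $n\leq -k$ on the left)'', which is precisely what is needed.
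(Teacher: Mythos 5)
Your argument is correct and is essentially the paper's own proof: negate the dichotomy to obtain subsequences $n_k,m_k\to\infty$ with (b2) holding at $n_k$ and $-m_k$, observe that (b2) delivers hypotheses (i) and (ii) of Lemma~\ref{l.key}, that \eqref{e.c-sqrt12} gives condition~\eqref{e.condition-m}, and conclude $m\in L$, a contradiction. You spell out hypothesis~(iii) in more detail than the paper (which simply invokes Lemma~\ref{l.key} without verifying it), and your verification is sound: the only small imprecision is the sentence about positions $|j|>N$ — since the continuations are by definition those for which the Markov value is attained at some $|j|\leq N$, the bound $m(\underline{\theta}_k^{(j)})\leq m+2^{-(k-2)}$ follows directly from the $|j|\leq N$ case without a separate argument at $|j|>N$, and the role of $[m-2\varepsilon,m+2\varepsilon]\cap L=\emptyset$ is really to guarantee the existence of $N$ in the setup rather than to control those positions here.
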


\begin{proof} If there were two subsequences $n_k, m_k\to+\infty$ so that the case (b2) happens for all $n_k$ and $-m_k$, then \eqref{e.c-sqrt12} and Lemma \ref{l.key} would say that $m\in L$, a contradiction with our assumption $m\in M\setminus L$. 
\end{proof} 

The previous proposition says that, up to replacing $\underline{a}$ by its transpose, there exists $k\geq 0$ such that, for all $n\geq k$, either $\dots a_0^*\dots a_n$ has a forced continuation $\dots a_0^*\dots a_n a_{n+1}\dots$ or two continuations $\dots a_0^*\dots a_n\alpha_n$ and $\dots a_0^*\dots a_n\beta_n$ with $[[0;\alpha_n],[0;\beta_n]]\cap K(B)=\emptyset$. Therefore, we conclude that in this setting 
\begin{equation}\label{e.M-L-constraint}
m=a_0 + [0;a_1,\dots]+[0;a_{-1},\dots]
\end{equation}
where $[0;a_{-1},\dots]\in K(C)$ and $[0;a_1,\dots]$ belongs to a set $K$ consisting of the union of a countable set $\mathcal{C}$ corresponding to the forced continuations of finite strings and a countable union of Cantor sets related to sequences generating two continued fractions at the extremities of an interval avoiding $K(B)$. 

Let $[0;a_1,\dots]\in K\setminus\mathcal{C}$ such that, for all $n$ sufficiently large, $a_1\dots a_n$ admits two continuations generating an interval avoiding $K(B)$. Given an arbitrary finite string $(b_1,\dots, b_n)$, consider the interval $I(b_1,\dots,b_n):=\{[0;b_1,\dots, b_n,\rho]: \rho> 1\}$. Recall that the length of $I(b_1,\dots,b_n)$ is 
$$|I(b_1,\dots,b_n)|=\frac{1}{q_n(q_n+q_{n-1})},$$ 
where $q_j$ is the denominator of $[0;b_1,\dots,b_j]$. We claim that the intervals $I(a_1,\dots, a_n)$ can be used to \emph{efficiently} cover $K\setminus\mathcal{C}$ as $n$ goes to infinity. For this sake, observe that if  $a_1\dots a_n$ has two continuations, say $a_1\dots a_n 1\alpha_{n+1}$ and $a_1\dots a_n 2\beta_{n+1}$ such that $[[0;2\beta_{n+1}], [0;1\alpha_{n+1}]]$ is disjoint from $K(B)$, then $\alpha_n=1\alpha_{n+1}$ and $\beta_n=2\beta_{n+1}$ start by 
$$\alpha_n = 112\alpha_{n+3} \quad \textrm{and} \quad \beta_n = 221\beta_{n+3}$$ 
In particular, we can \emph{refine} the cover of $K\setminus \mathcal{C}$ with the family of intervals  $I(a_1,\dots,a_n)$ by replacing each of them by $I(a_1,\dots,a_n,1,1,2)$ and $I(a_1,\dots,a_n,2,2,1)$. 

We affirm that this procedure doesn't increase the $(0.174813)$-Hausdorff measure of $K\setminus\mathcal{C}$. For this sake, it suffices to prove that 
\begin{equation}\label{e.subdivision-sqrt12}
|I(a_1,\dots,a_n,1,1,2)|^{s} + |I(a_1,\dots,a_n,2,2,1)|^{s} \leq |I(a_1,\dots,a_n)|^{s}
\end{equation}
with $s=0.174813$. 

In this direction, set 
$$g(s) := \frac{|I(a_1,\dots,a_n,1,1,2)|^s+|I(a_1,\dots,a_n,2,2,1)|^s}{|I(a_1,\dots,a_n)|^s}$$
The recurrence formula $q_{j+2}=a_{j+2}q_{j+1}+q_j$ implies that 
$$g(s) = \left(\frac{r+1}{(3r+5)(4r+7)}\right)^s + \left(\frac{r+1}{(3r+7)(5r+12)}\right)^s$$ 
where $r=q_{n-1}/q_n\in (0,1)$. Since $\frac{r+1}{(3r+5)(4r+7)}\leq \frac{1}{35}$ and $\frac{r+1}{(3r+7)(5r+12)}<\frac{1}{81.98}$ for all $0\leq r\leq 1$, we have  
$$g(s)<\left(\frac{1}{35}\right)^s + \left(\frac{1}{81.98}\right)^s$$
This proves \eqref{e.subdivision-sqrt12} because $\left(\frac{1}{35}\right)^{0.174813} + \left(\frac{1}{81.98}\right)^{0.174813} < 1$.

We summarize the discussion of the previous paragraphs in the following proposition:

\begin{proposition}\label{p.gap-Cantor-sqrt12} $(M\setminus L)\cap(\sqrt{10}, \sqrt{13})\subset K(\{1,2\})+K$ where $K$ is a set of Hausdorff dimension $HD(K)<0.174813$. 
\end{proposition}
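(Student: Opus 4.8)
The plan is to combine the structural description of $(M\setminus L)\cap(\sqrt{10},\sqrt{13})$ obtained in the preceding paragraphs, namely that every such $m$ can be written as in \eqref{e.M-L-constraint} with $[0;a_{-1},\dots]\in K(\{1,2\})=K(C)$ and $[0;a_1,\dots]\in K$, with the quantitative covering estimate just established. Concretely, once we know $(M\setminus L)\cap(\sqrt{10},\sqrt{13})\subset K(\{1,2\})+K$, it only remains to bound $HD(K)$, and since $K=\mathcal C\cup(K\setminus\mathcal C)$ with $\mathcal C$ countable, we have $HD(K)=HD(K\setminus\mathcal C)$, so the whole problem reduces to estimating the Hausdorff dimension of $K\setminus\mathcal C$.

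The key step is the self-covering inequality \eqref{e.subdivision-sqrt12}: for $s=0.174813$ and any admissible finite string $(a_1,\dots,a_n)$ arising from a point of $K\setminus\mathcal C$, replacing the cover interval $I(a_1,\dots,a_n)$ by the two intervals $I(a_1,\dots,a_n,1,1,2)$ and $I(a_1,\dots,a_n,2,2,1)$ does not increase the $s$-dimensional pre-measure. This is proved via the ratio function $g(s)$ written in terms of $r=q_{n-1}/q_n\in(0,1)$ using the recurrence $q_{j+2}=a_{j+2}q_{j+1}+q_j$, together with the uniform bounds $\frac{r+1}{(3r+5)(4r+7)}\le\frac1{35}$ and $\frac{r+1}{(3r+7)(5r+12)}<\frac1{81.98}$ on $[0,1]$, which yield $g(s)<(1/35)^s+(1/81.98)^s<1$ at $s=0.174813$. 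Iterating this refinement indefinitely (starting, say, from the level-one intervals $I(a_1)$, of which there are finitely many relevant ones) produces, for every $\delta>0$, a cover of $K\setminus\mathcal C$ by intervals of length $<\delta$ whose total $s$-measure is bounded by the finite initial sum; hence $\mathcal H^s(K\setminus\mathcal C)<\infty$ and therefore $HD(K\setminus\mathcal C)\le s=0.174813$. Because $\mathcal C$ is countable, $HD(K)<0.174813$ as claimed (the strict inequality being recorded here as a convenient rounding; what is actually proved is $\le 0.174813$, and one may decrease $s$ slightly since $(1/35)^s+(1/81.98)^s<1$ holds on an open interval of exponents).

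The main obstacle — already handled in the build-up but worth flagging — is establishing that the two continuations of an admissible string $a_1\dots a_n$ whose generated interval avoids $K(B)=K(\{11,22\}^{\mathbb Z})$ must in fact begin with the specific blocks $112$ and $221$ respectively (so that the refinement by appending $112$ and $221$ is legitimate and exhausts all of $K\setminus\mathcal C$). This is where the structure of the gaps of $K(B)$ inside $K(C)$ enters: a finite string forced to branch while avoiding $K(B)$ is constrained by Lemma \ref{l.0} and the self-similar gap structure of $K(\{11,22\}^{\mathbb Z})$ to have its two branches separate exactly along a gap of $K(B)$, forcing the digits $112$ and $221$. Granting this combinatorial fact (as the excerpt does), the remaining argument is the routine Hausdorff-measure bookkeeping sketched above, and one obtains Proposition \ref{p.gap-Cantor-sqrt12}.
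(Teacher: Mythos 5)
Your proposal follows the paper's own argument essentially step for step: the dichotomy coming from Lemma \ref{l.key} (via Proposition \ref{p.M-L-constraint}) gives the decomposition \eqref{e.M-L-constraint} with $[0;a_{-1},\dots]\in K(\{1,2\})$ and $[0;a_1,\dots]\in K=\mathcal C\cup(K\setminus\mathcal C)$, and then $HD(K\setminus\mathcal C)$ is bounded by the iterated refinement $I(a_1,\dots,a_n)\mapsto\{I(a_1,\dots,a_n,1,1,2),\,I(a_1,\dots,a_n,2,2,1)\}$ together with the ratio function $g(s)=\left(\tfrac{r+1}{(3r+5)(4r+7)}\right)^s+\left(\tfrac{r+1}{(3r+7)(5r+12)}\right)^s$ and the uniform bounds $\le 1/35$ and $<1/81.98$ for $r\in[0,1]$. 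The one genuine (and welcome) improvement in your write-up is the observation that the computation by itself only yields $HD(K\setminus\mathcal C)\le 0.174813$, and that the strict inequality in the statement comes from the openness of $\{s:(1/35)^s+(1/81.98)^s<1\}$, which the paper leaves implicit. You also correctly flag that the combinatorial input --- that when two branchings straddle a gap of $K(\{11,22\}^{\mathbb Z})$ the continuations are forced to begin with $112$ and $221$ --- is established in the preceding discussion and is the load-bearing fact making the $2$-to-$1$ refinement exhaustive.
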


An immediate corollary of this proposition is:

\begin{corollary}\label{c.M-L-sqrt10-sqrt13} $HD((M\setminus L)\cap(\sqrt{10},\sqrt{13})) < 0.706104$.
\end{corollary}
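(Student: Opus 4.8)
The plan is to deduce Corollary \ref{c.M-L-sqrt10-sqrt13} directly from Proposition \ref{p.gap-Cantor-sqrt12} together with Hensley's rigorous estimate for $HD(K(\{1,2\}))$. The key input is the classical fact (see, e.g., Palis--Takens or Moreira's work on regular Cantor sets) that the Hausdorff dimension of an arithmetic sum $K_1 + K_2$ of two sets satisfies $HD(K_1+K_2)\leq HD(K_1)+HD(K_2)$; this is an immediate consequence of the fact that a countable product cover of $K_1\times K_2$ by boxes pushes forward, under the Lipschitz map $(x,y)\mapsto x+y$, to a cover of $K_1+K_2$, and the $s$-dimensional Hausdorff pre-measure of the image is controlled by the product of the $s_1$- and $s_2$-dimensional pre-measures whenever $s\geq s_1+s_2$. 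Since Proposition \ref{p.gap-Cantor-sqrt12} gives $(M\setminus L)\cap(\sqrt{10},\sqrt{13})\subset K(\{1,2\})+K$ with $HD(K)<0.174813$, and Hensley's bound furnishes a rigorous upper estimate $HD(K(\{1,2\}))<0.531291$ (the precise numerical value being the one recorded in \cite{He}), I would simply add these two numbers: $0.531291 + 0.174813 = 0.706104$, which yields the claimed bound.

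Concretely, the steps in order are: first, invoke Proposition \ref{p.gap-Cantor-sqrt12} to reduce the problem to bounding $HD(K(\{1,2\})+K)$. Second, quote the subadditivity of Hausdorff dimension under arithmetic sums, noting that $(x,y)\mapsto x+y$ is Lipschitz and that $HD$ is monotone under Lipschitz images and behaves subadditively on products (alternatively, one can cite directly that for regular Cantor sets $HD(K_1+K_2)\le HD(K_1)+HD(K_2)$, which also follows from $HD(K_1\times K_2)\le HD(K_1)+HD(K_2)$ valid because one of the factors here is Ahlfors regular, or simply from the box-counting bound). Third, insert the numerical values: Hensley's rigorous bound for $HD(K(\{1,2\}))$, and the bound $HD(K)<0.174813$ from the previous proposition. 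Fourth, conclude $HD((M\setminus L)\cap(\sqrt{10},\sqrt{13}))\le HD(K(\{1,2\}))+HD(K)<0.706104$.

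There is essentially no obstacle here: the corollary is a one-line arithmetic consequence of the proposition once one grants the dimension-of-sumset inequality, which is standard. The only mild subtlety worth a sentence is making sure the inequality $HD(K_1+K_2)\le HD(K_1)+HD(K_2)$ is applied in a form that does not require any transversality or regularity hypotheses beyond what is available — and indeed the crude upper bound always holds, so the argument is unconditional. I would therefore write the proof as a short paragraph: ``By Proposition \ref{p.gap-Cantor-sqrt12}, $(M\setminus L)\cap(\sqrt{10},\sqrt{13})\subset K(\{1,2\})+K$. Since the Hausdorff dimension of an arithmetic sum of two sets is at most the sum of their Hausdorff dimensions, and Hensley's estimate in \cite{He} gives $HD(K(\{1,2\}))<0.531291$ while $HD(K)<0.174813$ by Proposition \ref{p.gap-Cantor-sqrt12}, we conclude that $HD((M\setminus L)\cap(\sqrt{10},\sqrt{13}))<0.531291+0.174813=0.706104$.''
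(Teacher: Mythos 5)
Your proof is essentially identical to the paper's: both deduce the bound from Proposition \ref{p.gap-Cantor-sqrt12}, the subadditivity of Hausdorff dimension under arithmetic sums, and Hensley's rigorous estimate $HD(K(\{1,2\}))<0.531291$. One small caveat: the inequality $HD(K_1+K_2)\leq HD(K_1)+HD(K_2)$ does \emph{not} hold for arbitrary sets (there are sets of Hausdorff dimension zero whose sum has dimension one), so your closing remark that ``the crude upper bound always holds'' is inaccurate; the correct justification is the one you also give, namely that $K(\{1,2\})$ is a regular Cantor set whose box and Hausdorff dimensions coincide, so that $HD(K(\{1,2\})+K)\leq \overline{\dim}_B(K(\{1,2\}))+HD(K)=HD(K(\{1,2\}))+HD(K)$.
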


\begin{proof} By Proposition \ref{p.gap-Cantor-sqrt12} and Hensley's estimate \cite{He} for $HD(K(\{1,2\}))$, one has 
\begin{eqnarray*}
HD((M\setminus L)\cap(\sqrt{10},\sqrt{13})) &\leq& HD(K(\{1,2\}))+HD(K) \\ 
&<& 0.531291 + 0.174813 = 0.706104
\end{eqnarray*}
This proves the corollary. 
\end{proof}

\subsection{Description of $(M\setminus L)\cap(\sqrt{13}, 3.84)$}\label{ss.sqrt13-3.84} 

Let $m\in M\setminus L$ with $\sqrt{13}<m<3.84$. In this setting, $m=m(\underline{a})=f(\underline{a})$ for a sequence $\underline{a}=(\dots, a_{-1}, a_0, a_1, \dots)\in \{1,2,3\}^{\mathbb{Z}}=:\Sigma(C)$ not containing $13$ nor $31$ because if $\underline{\theta}$ contains $13$ or $31$, then a result of Bumby (explained in Table 2 in Chapter 5 of Cusick--Flahive's book \cite{CF}) implies that $m(\underline{\theta})>3.84$.

Consider the (complete) subshift $\Sigma(B)=\{1,2\}^{\mathbb{Z}}\subset\Sigma(C)$. Note that $B$ and $C$ are symmetric, and the quantity $c(B,C)$ introduced above is bounded by 
\begin{equation*}
c(B,C) \leq \max\limits_{\substack{\beta\in K(B) \\ \alpha\in K(C)}} (\beta^{-1}+\alpha) = [2;\overline{1,2}]+[0;\overline{1,3}] < \sqrt{13} < m
\end{equation*} 
thanks to Lemma \ref{l.0}. 

We proceed similarly to Subsection \ref{ss.basic-argument}. More precisely, the same arguments (based on Lemma \ref{l.key}) above give that, up to transposing $\underline{a}$, there exists $k\in\mathbb{N}$ such that, for all $n\geq k$, either $\dots a_0^*\dots a_n$ has a forced continuation $\dots a_0^*\dots a_n a_{n+1}\dots$ or two continuations $\dots a_0^*\dots a_n\alpha_n$ and $\dots a_0^*\dots a_n\beta_n$ with $[[0;\alpha_n],[0;\beta_n]]\cap K(B)=\emptyset$. We want to use this information to efficiently cover $(M\setminus L)\cap(\sqrt{13}, 3.84)$. For this sake, let us note that the constraint $[[0;\alpha_n],[0;\beta_n]]\cap K(B)=\emptyset$ impose severe restrictions on the possible continuations $\alpha_n$ and $\beta_n$. In particular, they fall into two types:
\begin{itemize}
\item $\alpha_n=3\alpha_{n+1}$ and $\beta_n=21\beta_{n+2}$;
\item $\alpha_n\in\{221\alpha_{n+3},23\alpha_{n+2}\}$ and $\beta_n\in\{1121\beta_{n+4},113\beta_{n+3}\}$.
\end{itemize}
Moreover, our assumption that $\underline{a}$ doesn't contain $13$ or $31$ (due to the hypothesis $m(\underline{a})=m<3.84$) says that we can \emph{ignore} the case $\beta_n\in\{113\beta_{n+3}\}$. 

In summary, we have that the $s$-Hausdorff measure of the set 
$$K:=\{\underline{a}=[a_0;a_1,\dots]: \sqrt{13} < m(\underline{a}) < 3.84\}$$ is finite for any parameter $s$ with 
$$g(s)=\frac{|I(a_1,\dots,a_n,3)|^s + |I(a_1,\dots,a_n,2,1)|^s}{|I(a_1,\dots,a_n)|^s}<1$$
and 
$$h(s)=\frac{|I(a_1,\dots,a_n,2,2,1)|^s + |I(a_1,\dots,a_n,2,3)|^s+|I(a_1,\dots,a_n,1,1,2,1)|^s}{|I(a_1,\dots,a_n)|^s}<1$$ 
for all $(a_1,\dots,a_n)\in\bigcup\limits_{k\in\mathbb{N}}\{1,2,3\}^k$. 

The recurrence $q_{j+2}=a_{j+2}q_{j+1}+q_j$ implies that 
$$\frac{|I(a_1,\dots,a_n,3)|}{|I(a_1,\dots,a_n)|}=\frac{r+1}{(r+3)(r+4)}, \quad \frac{|I(a_1,\dots,a_n,2,1)|}{|I(a_1,\dots,a_n)|}= \frac{r+1}{(r+3)(2r+5)}$$ 
and 
\begin{eqnarray*}
& \frac{|I(a_1,\dots,a_n,2,2,1)|}{|I(a_1,\dots,a_n)|} =  \frac{r+1}{(3r+7)(5r+12)}, \frac{|I(a_1,\dots,a_n,2,3)|}{|I(a_1,\dots,a_n)|} = \frac{r+1}{(3r+7)(4r+9)}, \\ & \frac{|I(a_1,\dots,a_n,1,1,2,1)|}{|I(a_1,\dots,a_n)|}= \frac{r+1}{(4r+7)(7r+12)}
\end{eqnarray*}
where $r=q_{n-1}/q_n\in(0,1)$. 

Since $\frac{r+1}{(r+3)(r+4)}\leq \frac{1}{10}$, $\frac{r+1}{(r+3)(2r+5)}\leq 0.071797$, $\frac{r+1}{(3r+7)(5r+12)}\leq 0.012197$, $\frac{r+1}{(3r+7)(4r+9)}\leq 0.016134$ and $\frac{r+1}{(4r+7)(7r+12)}\leq \frac{1}{84}$ for all $0\leq r\leq 1$, we deduce that 
$$g(s)\leq \left(\frac{1}{10}\right)^s + (0.071797)^s \quad \textrm{and} \quad h(s)\leq (0.012197)^s + (0.016134)^s +\left(\frac{1}{84}\right)^s$$
Therefore, $\max\{g(0.281266), h(0.281266)\} < 0.999999$ and, \emph{a fortiori}, the $(0.281266)$-Hausdorff measure of 
$$K=\{[a_0;a_1,\dots]: \sqrt{13} < m(\underline{a}) < 3.84\}$$ 
is finite. It follows from this discussion: 
\begin{proposition}\label{p.gap-Cantor-sqrt13-I}$(M\setminus L)\cap(\sqrt{13}, 3.84)\subset X_3(\{13,31\})+K$ where 
$$X_3(\{13,31\}):=\{[0;\gamma]:\gamma\in \{1,2,3\}^{\mathbb{N}} \textrm{ doesn't contain } 13 \textrm{ nor } 31\}$$ and $K$ is a set of Hausdorff dimension $HD(K)<0.281266$.
\end{proposition}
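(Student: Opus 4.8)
\textbf{Proof plan for Proposition \ref{p.gap-Cantor-sqrt13-I}.}

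The plan is to follow the template already established in Subsection \ref{ss.basic-argument} for the interval $(\sqrt{10},\sqrt{13})$, now with the symmetric blocks $\Sigma(B)=\{1,2\}^{\mathbb{Z}}\subset\Sigma(C)=\{1,2,3\}^{\mathbb{Z}}$ and with the extra input from Bumby's table that forbids the digit strings $13$ and $31$. First I would fix $m\in (M\setminus L)\cap(\sqrt{13},3.84)$ and realize it as $m=m(\underline a)=f(\underline a)$ with $\underline a\in\{1,2,3\}^{\mathbb{Z}}$ not containing $13$ or $31$, so that $[0;a_1,a_2,\dots]$ ranges over the Cantor set $X_3(\{13,31\})$ and $[0;a_{-1},a_{-2},\dots]$ likewise. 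Next I would check the bound $c(B,C)\le [2;\overline{1,2}]+[0;\overline{1,3}]<\sqrt{13}<m$ via Lemma \ref{l.0}, which verifies hypothesis \eqref{e.condition-m}; then, exactly as in Proposition \ref{p.M-L-constraint}, if the ``case (b2)'' — two continuations straddling $K(B)$ — occurred along subsequences going to $+\infty$ on \emph{both} sides, Lemma \ref{l.key} would put $m\in L$, a contradiction. Hence, after possibly transposing $\underline a$, there is $k$ such that for all $n\ge k$ the one-sided word $\dots a_0^*\dots a_n$ either has a forced continuation or two continuations $\alpha_n,\beta_n$ with $[[0;\alpha_n],[0;\beta_n]]\cap K(B)=\emptyset$.

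The combinatorial heart of the argument is to translate the separation condition $[[0;\alpha_n],[0;\beta_n]]\cap K(\{1,2\})=\emptyset$ into the two allowed bifurcation patterns listed in the statement: $\alpha_n=3\alpha_{n+1}$, $\beta_n=21\beta_{n+2}$, or $\alpha_n\in\{221\alpha_{n+3},23\alpha_{n+2}\}$, $\beta_n\in\{1121\beta_{n+4},113\beta_{n+3}\}$. This is a finite check: the gaps of $K(\{1,2\})$ are the intervals between $[0;\gamma,1,\overline{2,1}]$ and $[0;\gamma,2,\overline{1,2}]$ over finite admissible words $\gamma$, so a value $[0;3,\dots]$ lies in the first gap to the left of $K(\{1,2\})$, $[0;2,1,\dots]$ and $[0;2,2,1,\dots]$ and $[0;2,3,\dots]$ land in specific gaps, and so forth; one simply enumerates which first few digits are compatible with an endpoint of a complementary interval of $K(\{1,2\})$. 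Crucially, the hypothesis that $\underline a$ omits $13$ and $31$ lets me discard the branch $\beta_n=113\beta_{n+3}$, leaving only the patterns written above. This is the step I expect to be the main obstacle — not because it is deep, but because getting the exact list of admissible prefixes right (and checking no other pattern survives the disjointness constraint) requires a careful case analysis of the gap structure of $K(\{1,2\})$.

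With the bifurcation patterns in hand, the rest is the covering/Hausdorff-measure estimate, carried out just as in \eqref{e.subdivision-sqrt12}. I introduce the cylinder intervals $I(a_1,\dots,a_n)$, use the recurrence $q_{j+2}=a_{j+2}q_{j+1}+q_j$ to write each ratio $|I(a_1,\dots,a_n,\text{suffix})|/|I(a_1,\dots,a_n)|$ as an explicit rational function of $r=q_{n-1}/q_n\in(0,1)$, and bound each such function on $[0,1]$ (giving $\tfrac1{10}$, $0.071797$, $0.012197$, $0.016134$, $\tfrac1{84}$ for the five relevant suffixes). Then I check that both refinement inequalities $g(s)<1$ and $h(s)<1$ hold at $s=0.281266$, which shows the $(0.281266)$-Hausdorff measure of $K=\{[a_0;a_1,\dots]:\sqrt{13}<m(\underline a)<3.84\}$ is finite; combined with $(M\setminus L)\cap(\sqrt{13},3.84)\subset X_3(\{13,31\})+K$ coming from \eqref{e.M-L-constraint}, this yields the proposition. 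All the one-variable inequalities are routine calculus (each rational function is monotone or has a single interior critical point on $[0,1]$), so no serious difficulty arises after the combinatorial step.
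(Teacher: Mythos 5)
Your proposal is correct and follows essentially the same route as the paper: identify $\underline a\in\{1,2,3\}^{\mathbb{Z}}$ avoiding $13,31$ via Bumby's table, set up the symmetric blocks $\Sigma(B)=\{1,2\}^{\mathbb{Z}}\subset\Sigma(C)=\{1,2,3\}^{\mathbb{Z}}$ with $c(B,C)\leq[2;\overline{1,2}]+[0;\overline{1,3}]<\sqrt{13}$, invoke Lemma \ref{l.key} to force the one-sided ``avoid $K(B)$'' bifurcation structure, enumerate the admissible bifurcation prefixes (discarding $113\beta_{n+3}$ because $13,31$ are excluded), and close with the Hausdorff-measure covering estimate at $s=0.281266$ using the same ratio bounds $\tfrac1{10},0.071797,0.012197,0.016134,\tfrac1{84}$. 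The one step you flagged as delicate --- extracting the exact list of bifurcation patterns from the gap structure of $K(\{1,2\})$ --- is indeed the part the paper states without proof, and your description of how to carry it out (matching prefixes against the endpoints $[0;\gamma,1,\overline{2,1}]$ and $[0;\gamma,2,\overline{1,2}]$ of the complementary gaps) is the right finite check.
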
 

A direct consequence of this proposition is:

\begin{corollary}\label{c.M-L-sqrt13-I} $HD((M\setminus L)\cap(\sqrt{13}, 3.84)) < 0.986927$.
\end{corollary}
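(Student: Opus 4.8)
The plan is to deduce Corollary~\ref{c.M-L-sqrt13-I} directly from Proposition~\ref{p.gap-Cantor-sqrt13-I} together with Hensley's rigorous estimate for $HD(K(\{1,2,3\}))$ and the classical behaviour of Hausdorff dimension under arithmetic sums. By Proposition~\ref{p.gap-Cantor-sqrt13-I} we already have the inclusion
$$(M\setminus L)\cap(\sqrt{13},3.84)\subset X_3(\{13,31\})+K,$$
with $HD(K)<0.281266$, so the whole problem is reduced to bounding $HD\big(X_3(\{13,31\})+K\big)$.

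First I would note that $X_3(\{13,31\})\subset K(\{1,2,3\})$, since every element of $X_3(\{13,31\})$ is a continued fraction $[0;\gamma]$ whose partial quotients all lie in $\{1,2,3\}$; hence $HD(X_3(\{13,31\}))\leq HD(K(\{1,2,3\}))$. Moreover $X_3(\{13,31\})$ is the Cantor set associated to a transitive subshift of finite type (words in $\{1,2,3\}$ avoiding $13$ and $31$ form an irreducible SFT), hence it is a $C^{1+}$ dynamically defined Cantor set and its upper box dimension coincides with its Hausdorff dimension. Since $X_3(\{13,31\})+K$ is the image of the product $X_3(\{13,31\})\times K\subset\mathbb{R}^2$ under the Lipschitz map $(x,y)\mapsto x+y$, and since $HD(A\times B)\leq \overline{\dim}_B(A)+HD(B)$ in general, I obtain
$$HD\big(X_3(\{13,31\})+K\big)\leq \overline{\dim}_B\big(X_3(\{13,31\})\big)+HD(K)= HD\big(X_3(\{13,31\})\big)+HD(K)\leq HD(K(\{1,2,3\}))+HD(K).$$
(Alternatively, one can avoid the box-dimension remark altogether and simply cover $X_3(\{13,31\})+K$ by products of the natural cylinder intervals of the two Cantor sets, exactly as in the proof of Proposition~\ref{p.gap-Cantor-sqrt13-I}, and sum the resulting $s$-dimensional volumes.)

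Finally I would plug in Hensley's estimate from \cite{He}, namely $HD(K(\{1,2,3\}))<0.705661$, together with the bound $HD(K)<0.281266$ from Proposition~\ref{p.gap-Cantor-sqrt13-I}, to conclude
$$HD\big((M\setminus L)\cap(\sqrt{13},3.84)\big)<0.705661+0.281266=0.986927.$$
This is the same mechanism used to prove Corollary~\ref{c.M-L-sqrt10-sqrt13}, so I do not expect any genuine obstacle here; the only point deserving a line of care is the passage from the arithmetic sum to the Cartesian product, which is dispatched by the regularity of $X_3(\{13,31\})$ (equality of its box and Hausdorff dimensions) as indicated above.
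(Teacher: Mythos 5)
Your proposal is correct and follows essentially the same route as the paper: invoke Proposition~\ref{p.gap-Cantor-sqrt13-I}, bound $HD(X_3(\{13,31\}))\le HD(K(\{1,2,3\}))$ by inclusion, use subadditivity of Hausdorff dimension under arithmetic sums, and plug in Hensley's bound. The paper states the chain of inequalities without spelling out the product/projection justification for $HD(A+B)\le HD(A)+HD(B)$, which you supply via the regularity (box dimension equals Hausdorff dimension) of the dynamically defined Cantor set $X_3(\{13,31\})$; this is a standard fact the authors take as implicit, so the difference is only in the level of detail.
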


\begin{proof} By Proposition \ref{p.gap-Cantor-sqrt13-I} and Hensley's estimate \cite{He} for $HD(K(\{1,2,3\}))$, one has 
\begin{eqnarray*}
HD((M\setminus L)\cap(\sqrt{13},3.84)) &\leq& HD(K(\{1,2,3\}))+HD(K) \\ 
&<& 0.705661 + 0.281266 = 0.986927
\end{eqnarray*}
This completes the argument. 
\end{proof}

\subsection{Description of $(M\setminus L)\cap(3.84, \sqrt{20})$}\label{ss.3.84-sqrt20} 

Let $m\in M\setminus L$ with $3.84<m<\sqrt{20}$. In this setting, $m=m(\underline{a})=f(\underline{a})$ for a sequence $\underline{a}=(\dots, a_{-1}, a_0, a_1, \dots)\in \{1,2,3\}^{\mathbb{Z}}=:\Sigma(C)$. Consider the subshift $\Sigma(B)\subset\Sigma(C)$ associated to $B=\{1,2, 2321, 1232\}$. Note that $B$ and $C$ are symmetric, and the quantity $c(B,C)$ introduced above is bounded by 
\begin{equation*}
c(B,C) \leq [3;2,1,\overline{1,2}]+[0;2,\overline{3,1}] < 3.83 < 3.84 < m
\end{equation*} 
thanks to Lemma \ref{l.0}. 

As it was explained before, we can use Lemma \ref{l.key} to see that, up to transposing $\underline{a}$, there exists $k\in\mathbb{N}$ such that, for all $n\geq k$, either $\dots a_0^*\dots a_n$ has a forced continuation $\dots a_0^*\dots a_n a_{n+1}\dots$ or two continuations $\dots a_0^*\dots a_n\alpha_n$ and $\dots a_0^*\dots a_n\beta_n$ with $[[0;\alpha_n],[0;\beta_n]]\cap K(B)=\emptyset$. From this, we are ready to set up an efficient cover of $(M\setminus L)\cap(3.84,\sqrt{20})$. In this direction, note that the condition $[[0;\alpha_n],[0;\beta_n]]\cap K(B)=\emptyset$ imposes two types of restrictions:
\begin{itemize}
\item $\alpha_n=3\alpha_{n+1}$ and $\beta_n=21\beta_{n+2}$;
\item $\alpha_n=23\alpha_{n+2}$ and $\beta_n\in\{1121\beta_{n+4},113\beta_{n+3}\}$.
\end{itemize} 

Hence, we have that the $s$-Hausdorff measure of the set 
$$K:=\{\underline{a}=[a_0;a_1,\dots]: 3.84 < m(\underline{a}) < \sqrt{20}\}$$ is finite for any parameter $s$ with 
$$g(s)=\frac{|I(a_1,\dots,a_n,3)|^s + |I(a_1,\dots,a_n,2,1)|^s}{|I(a_1,\dots,a_n)|^s}<1$$
and 
$$h(s)=\frac{|I(a_1,\dots,a_n,2,3)|^s+|I(a_1,\dots,a_n,1,1,2,1)|^s+|I(a_1,\dots,a_n,1,1,3)|^s}{|I(a_1,\dots,a_n)|^s}<1$$ 
for all $(a_1,\dots,a_n)\in\bigcup\limits_{k\in\mathbb{N}}\{1,2,3\}^k$. 

We saw in the previous subsection that $g(0.281266) < 0.999999$ and 
$$h(s)\leq (0.016134)^s +\left(\frac{1}{84}\right)^s + \frac{|I(a_1,\dots,a_n,1,1,3)|^s}{|I(a_1,\dots,a_n)|^s}$$ 
Because the recurrence $q_{j+2}=a_{j+2}q_{j+1}+q_j$ implies that 
$$\frac{|I(a_1,\dots,a_n,1,1,3)|}{|I(a_1,\dots,a_n)|} = \frac{r+1}{(4r+7)(5r+9)}$$
where $r=q_{n-1}/q_n\in(0,1)$, and since $\frac{r+1}{(4r+7)(5r+9)}\leq \frac{1}{63}$ for all $0\leq r\leq 1$, we conclude 
$$h(s)\leq (0.016134)^s +\left(\frac{1}{84}\right)^s + \left(\frac{1}{63}\right)^s$$ 

Thus, $\max\{g(0.281266), h(0.281266)\} < 0.999999$ and, \emph{a fortiori}, the $(0.281266)$-Hausdorff measure of 
$$K=\{[a_0;a_1,\dots]: 3.84 < m(\underline{a}) < \sqrt{20}\}$$ 
is finite. In particular, we proved the following result: 
\begin{proposition}\label{p.gap-Cantor-sqrt13-II}$(M\setminus L)\cap(3.84, \sqrt{20})\subset K(\{1,2,3\})+K$ where $K$ is a set of Hausdorff dimension $HD(K)<0.281266$.
\end{proposition}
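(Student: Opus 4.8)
The plan is to follow the template established in Subsections \ref{ss.basic-argument} and \ref{ss.sqrt13-3.84}, now with the ``symmetric block'' taken to be the subshift $\Sigma(B)$ with $B=\{1,2,2321,1232\}$, and to carry out the corresponding combinatorial and metric estimates for the Cantor set of ``gap tails''.

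First I would record the classical fact (cf. \cite{CF}) that any $m\in M$ with $3.84<m<\sqrt{20}$ is of the form $m=m(\underline a)=f(\underline a)$ with $\underline a=(a_n)_{n\in\mathbb Z}\in\{1,2,3\}^{\mathbb Z}=:\Sigma(C)$, and check, exactly as in \eqref{e.c-sqrt12}, that $B$ and $C$ are symmetric and that $c(B,C)\leq[3;2,1,\overline{1,2}]+[0;2,\overline{3,1}]<3.84<m$ by Lemma \ref{l.0}. Fixing $\varepsilon>0$ with $[m-2\varepsilon,m+2\varepsilon]\cap L=\emptyset$ and $N$ with $f(\sigma^j(\underline a))<m-2\varepsilon$ for $|j|\geq N$, I would then invoke Lemma \ref{l.key} exactly as in Proposition \ref{p.M-L-constraint}: if, for arbitrarily large $n$, both the forward string $\dots a_0^*\dots a_n$ and the backward string $a_{-n}\dots a_0^*\dots$ admitted two admissible continuations bracketing a point of $K(B)$ (the Markov values being attained within $|j|\leq N$, which is precisely what forces hypothesis (iii) of Lemma \ref{l.key}, since the four completions agree with $\underline a$ near position $0$), then $m\in L$; since $m\notin L$ this cannot happen on both sides, so after possibly transposing $\underline a$ there is $k$ such that for every $n\geq k$ the string $\dots a_0^*\dots a_n$ either continues in a unique admissible way or splits into $\dots a_0^*\dots a_n\alpha_n$ and $\dots a_0^*\dots a_n\beta_n$ with $[[0;\alpha_n],[0;\beta_n]]\cap K(B)=\emptyset$.

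Next I would analyse the gap structure of $K(B)$. Since every one-sided sequence in $\Sigma^+(B)$ is a concatenation of the blocks $1,2,2321,1232$, a direct inspection of where the complementary gaps of $K(B)$ sit inside $[0,1]$ shows that a pair $\alpha_n,\beta_n$ bracketing such a gap (with all digits $\leq 3$) must begin with one of the patterns $\alpha_n=3\alpha_{n+1},\ \beta_n=21\beta_{n+2}$, or $\alpha_n=23\alpha_{n+2},\ \beta_n\in\{1121\beta_{n+4},\,113\beta_{n+3}\}$. I would then set $K:=\{[0;a_1,a_2,\dots]:3.84<m(\underline a)<\sqrt{20}\}$, a union of a countable set (forced continuations, hence dimension $0$) and the relevant ``gap Cantor set'', and cover it by the cylinders $I(a_1,\dots,a_n)$; whenever the string $a_1\dots a_n$ branches I replace $I(a_1,\dots,a_n)$ by the children $I(a_1,\dots,a_n,3),\,I(a_1,\dots,a_n,2,1)$ in the first case, and $I(a_1,\dots,a_n,2,3),\,I(a_1,\dots,a_n,1,1,2,1),\,I(a_1,\dots,a_n,1,1,3)$ in the second. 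Using $|I(b_1,\dots,b_n)|=1/(q_n(q_n+q_{n-1}))$ and the recurrence $q_{j+2}=a_{j+2}q_{j+1}+q_j$, the child-to-parent length ratios become functions of $r=q_{n-1}/q_n\in(0,1)$ alone, and (reusing the bounds of Subsection \ref{ss.sqrt13-3.84} together with $\frac{r+1}{(4r+7)(5r+9)}\leq \frac{1}{63}$) one obtains $g(s)\leq(1/10)^s+(0.071797)^s$ and $h(s)\leq(0.016134)^s+(1/84)^s+(1/63)^s$; checking $\max\{g(0.281266),h(0.281266)\}<0.999999<1$ shows the refinement never increases the $(0.281266)$-Hausdorff measure, so that measure is finite, and since the inequality is strict one may shrink $s$ slightly to conclude $HD(K)<0.281266$. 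Finally, writing $m=f(\underline a)=[a_0;a_1,a_2,\dots]+[0;a_{-1},a_{-2},\dots]$ with $[0;a_{-1},\dots]\in K(\{1,2,3\})$ and $[0;a_1,\dots]\in K$, and absorbing the finitely many values of the integer part $a_0$ into a translate (which does not change the Hausdorff dimension), yields $(M\setminus L)\cap(3.84,\sqrt{20})\subset K(\{1,2,3\})+K$ with $HD(K)<0.281266$, as claimed.

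The step I expect to be most delicate is the gap analysis of $K(B)$: one must pin down exactly which initial patterns of $\alpha_n$ and $\beta_n$ are compatible with $[[0;\alpha_n],[0;\beta_n]]$ avoiding $K(B)$ for this particular block set $\{1,2,2321,1232\}$, since an error there would alter the list of children and hence the functions $g,h$; a secondary nuisance is keeping the numerical bounds on the ratios $\frac{r+1}{(\cdot)(\cdot)}$ sharp enough that $\max\{g(0.281266),h(0.281266)\}<1$ genuinely holds, the margin being tiny. Everything else — the application of Lemma \ref{l.key}, the self-similar covering argument, and the final sum-set reformulation — is a direct transcription of the two preceding subsections.
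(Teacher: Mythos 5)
Your proposal is correct and follows essentially the same approach as the paper: it uses the same symmetric block $B=\{1,2,2321,1232\}$, the same bound $c(B,C)\leq[3;2,1,\overline{1,2}]+[0;2,\overline{3,1}]<3.84$, the same invocation of Lemma \ref{l.key} to force one side of $\underline a$ into the gaps of $K(B)$, the same two-case classification of the bracketing patterns $\alpha_n,\beta_n$, the same choice of child cylinders for the functions $g$ and $h$, and the same numerical check $\max\{g(0.281266),h(0.281266)\}<1$. The only additions are minor bookkeeping remarks (absorbing the integer part $a_0$, shrinking $s$ slightly to pass from finite measure to strict dimension inequality), which are consistent with what the paper leaves implicit.
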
 

As usual, this proposition yields the following estimate: 

\begin{corollary}\label{c.M-L-sqrt13-II} $HD((M\setminus L)\cap(3.84, \sqrt{20})) < 0.986927$.
\end{corollary}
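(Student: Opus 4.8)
The plan is to replay, almost verbatim, the argument that proved Corollary \ref{c.M-L-sqrt13-I}. By Proposition \ref{p.gap-Cantor-sqrt13-II} we already dispose of the inclusion $(M\setminus L)\cap(3.84, \sqrt{20})\subset K(\{1,2,3\})+K$ together with the bound $HD(K)<0.281266$, so the whole matter reduces to estimating the Hausdorff dimension of an arithmetic sum of two Cantor sets in terms of the dimensions of the summands.

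First I would recall that the sum-set $K(\{1,2,3\})+K$ is the image of the product $K(\{1,2,3\})\times K$ under the Lipschitz map $(x,y)\mapsto x+y$, so that $HD\big(K(\{1,2,3\})+K\big)\le HD\big(K(\{1,2,3\})\times K\big)$; combining this with the standard product inequality $HD(A\times B)\le HD(A)+\overline{\dim}_B(B)$ (upper box-counting dimension on the right) yields $HD\big(K(\{1,2,3\})+K\big)\le HD\big(K(\{1,2,3\})\big)+\overline{\dim}_B(K)$. Here I would note that the covering of $K$ by the intervals $I(a_1,\dots,a_n)$ used to establish Proposition \ref{p.gap-Cantor-sqrt13-II} — which enjoys bounded distortion — in fact controls the upper box dimension of $K$ by the same exponent, so $\overline{\dim}_B(K)<0.281266$; alternatively one uses that $K(\{1,2,3\})$ is a regular (Gauss–)Cantor set, whence $\overline{\dim}_B\big(K(\{1,2,3\})\big)=HD\big(K(\{1,2,3\})\big)$, and the product inequality can be applied with the roles of $A$ and $B$ reversed.

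Finally I would insert Hensley's rigorous estimate $HD(K(\{1,2,3\}))<0.705661$ from \cite{He} and carry out the arithmetic
$$HD\big((M\setminus L)\cap(3.84, \sqrt{20})\big)\le HD(K(\{1,2,3\}))+HD(K)<0.705661+0.281266=0.986927,$$
which is precisely the claimed bound.

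I do not expect any genuine obstacle: the statement is an immediate consequence of Proposition \ref{p.gap-Cantor-sqrt13-II}, structurally identical to Corollary \ref{c.M-L-sqrt13-I}. The only point deserving a sentence of justification is the transition from Hausdorff to box-counting dimension in the sum-set estimate, which is harmless here because $K(\{1,2,3\})$ is a dynamically defined $C^1$-regular Cantor set for which the two dimensions agree.
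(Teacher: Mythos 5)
Your proof is correct and is essentially the same as the paper's: the paper simply invokes Proposition \ref{p.gap-Cantor-sqrt13-II} and Hensley's bound and writes $HD((M\setminus L)\cap(3.84,\sqrt{20}))\le HD(K(\{1,2,3\}))+HD(K)<0.705661+0.281266$. You have merely unpacked the implicit sum-set dimension inequality (Lipschitz image of the product, product bound via box dimension, and the fact that the regular Cantor set $K(\{1,2,3\})$ has coinciding Hausdorff and box dimensions), which is a fine level of detail but not a different route.
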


\begin{proof} By Proposition \ref{p.gap-Cantor-sqrt13-II} and Hensley's estimate \cite{He} for $HD(K(\{1,2,3\}))$, one has 
\begin{eqnarray*}
HD((M\setminus L)\cap(3.84,\sqrt{20})) &\leq& HD(K(\{1,2,3\}))+HD(K) \\ 
&<& 0.705661 + 0.281266 = 0.986927
\end{eqnarray*}
This ends the proof. 
\end{proof}

\subsection{Description of $(M\setminus L)\cap(\sqrt{20}, \sqrt{21})$} 

Let $m\in M\setminus L$ with $\sqrt{20}<m<\sqrt{21}$. In this setting, $m=m(\underline{a})=f(\underline{a})$ for a sequence 
$$\underline{a}=(\dots, a_{-1}, a_0, a_1, \dots)\in \Sigma(C):=\{\gamma\in \{1,2,3,4\}^{\mathbb{Z}} \textrm{ not containing } 14, 41, 24, 42\}$$ because:
\begin{itemize} 
\item if $\underline{\theta}$ contains $14$ or $41$, then $m(\underline{\theta})\geq [4;1,\overline{1,4}]+[0;\overline{4,1}]>\sqrt{21}$ by Lemma \ref{l.0};
\item if $\underline{\theta}$ contains $24$ or $42$ but neither $14$ nor $41$, then Lemma \ref{l.0} implies that $m(\underline{\theta})\geq [4;2,\overline{1,3}]+[0;\overline{4,2}]>\sqrt{21}$.  
\end{itemize} 

Consider the subshift $\Sigma(B)\subset\Sigma(C)$ associated to $B=\{21312, 232, 3, 11313, 31311\}$ with the restrictions that $31311$ follows only $3$, and $11313$ is followed only by $3$. Note that $B$ and $C$ are symmetric, and the quantity $c(B,C)$ introduced above is  
\begin{equation*}
c(B,C) < 4.46 <\sqrt{20} < m
\end{equation*} 
thanks to Lemma \ref{l.0}. 

As it was explained before, we can use Lemma \ref{l.key} to see that, up to transposing $\underline{a}$, there exists $k\in\mathbb{N}$ such that, for all $n\geq k$, either $\dots a_0^*\dots a_n$ has a forced continuation $\dots a_0^*\dots a_n a_{n+1}\dots$ or two continuations $\dots a_0^*\dots a_n\alpha_n$ and $\dots a_0^*\dots a_n\beta_n$ with $[[0;\alpha_n],[0;\beta_n]]\cap K(B)=\emptyset$. From this, we are ready to cover $(M\setminus L)\cap(\sqrt{20},\sqrt{21})$. In this direction, note that the condition $[[0;\alpha_n],[0;\beta_n]]\cap K(B)=\emptyset$ imposes three types of restrictions:
\begin{itemize}
\item $\alpha_n=4\alpha_{n+1}$ and $\beta_n=3131\beta_{n+4}$;
\item $\alpha_n\in\{33131\alpha_{n+5}, 34\alpha_{n+2}\}$ and $\beta_n=2131\beta_{n+4}$;
\item $\alpha_n=23\alpha_{n+2}$ and $\beta_n=1131\beta_{n+3}$.
\end{itemize} 

Hence, we have that the $s$-Hausdorff measure of the set 
$$K:=\{[a_0;a_1,\dots]: \sqrt{20} < m(\underline{a}) < \sqrt{21}\}$$ is finite for any parameter $s$ with 
$$g(s)=\frac{|I(a_1,\dots,a_n,4)|^s + |I(a_1,\dots,a_n,3,1,3,1)|^s}{|I(a_1,\dots,a_n)|^s}<1,$$
$$h(s)=\frac{|I(a_1,\dots,a_n,3,3,1,3,1)|^s+|I(a_1,\dots,a_n,3,4)|^s+|I(a_1,\dots,a_n,2,1,3,1)|^s}{|I(a_1,\dots,a_n)|^s}<1$$ 
and 
$$i(s)=\frac{|I(a_1,\dots,a_n,2,3)|^s+|I(a_1,\dots,a_n,1,1,3,1)|^s}{|I(a_1,\dots,a_n)|^s}<1$$
for all $(a_1,\dots,a_n)\in\bigcup\limits_{k\in\mathbb{N}}\{1,2,3,4\}^k$. 

We saw in the previous subsection that  
$$i(s)\leq (0.016134)^s +\frac{|I(a_1,\dots,a_n,1,1,3,1)|^s}{|I(a_1,\dots,a_n)|^s}$$ 
On the other hand, the recurrence $q_{j+2}=a_{j+2}q_{j+1}+q_j$ implies that 
$$\frac{|I(a_1,\dots,a_n,4)|}{|I(a_1,\dots,a_n)|} = \frac{r+1}{(r+4)(r+5)}, \quad \frac{|I(a_1,\dots,a_n,3,1,3,1)|}{|I(a_1,\dots,a_n)|} = \frac{r+1}{(5r+19)(9r+24)},$$
\begin{eqnarray*}
&\frac{|I(a_1,\dots,a_n,3,3,1,3,1)|}{|I(a_1,\dots,a_n)|} = \frac{r+1}{(19r+62)(34r+111)}, \quad \frac{|I(a_1,\dots,a_n,3,4)|}{|I(a_1,\dots,a_n)|} = \frac{r+1}{(4r+13)(5r+16)} \\ 
&\frac{|I(a_1,\dots,a_n,2,1,3,1)|}{|I(a_1,\dots,a_n)|} =  \frac{r+1}{(5r+14)(9r+25)}
\end{eqnarray*}
and 
$$\frac{|I(a_1,\dots,a_n,1,1,3,1)|}{|I(a_1,\dots,a_n)|} = \frac{r+1}{(5r+9)(9r+16)}$$
where $r=q_{n-1}/q_n\in(0,1)$. 

Since $\frac{r+1}{(r+4)(r+5)}\leq \frac{1}{15}$, $\frac{r+1}{(5r+19)(9r+24)}\leq \frac{1}{516}$, $\frac{r+1}{(19r+62)(34r+111)} \leq \frac{2}{11745}$, $\frac{r+1}{(4r+13)(5r+16)} \leq \frac{2}{357}$, $\frac{r+1}{(5r+14)(9r+25)}\leq 0.003106$ and $\frac{r+1}{(5r+9)(9r+16)} \leq \frac{1}{144}$ for all $0\leq r\leq 1$, we get  
\begin{eqnarray*}
& & g(s)\leq \left(\frac{1}{15}\right)^s + \left(\frac{1}{516}\right)^s, \quad   h(s)\leq \left(\frac{2}{11745}\right)^s + \left(\frac{2}{357}\right)^s + (0.003106)^s, \\
& & i(s)\leq (0.016134)^s + \left(\frac{1}{144}\right)^s
\end{eqnarray*} 

Thus, $\max\{g(0.172825), h(0.172825), i(0.172825)\} < 0.999997$ and, \emph{a fortiori}, the $(0.172825)$-Hausdorff measure of 
$$K=\{[a_0;a_1,\dots]: \sqrt{20} < m(\underline{a}) < \sqrt{21}\}$$ 
is finite. In particular, we proved the following result: 
\begin{proposition}\label{p.gap-Cantor-sqrt20}$(M\setminus L)\cap(\sqrt{20}, \sqrt{21})\subset X_4(\{14,41,24,42\})+K$ where 
$$X_4(\{14,41,24,42\}):=\{[0;\gamma]:\gamma\in \{1,2,3,4\}^{\mathbb{N}} \textrm{ doesn't contain } 14,  41, 24, 42\}$$ 
and $K$ is a set of Hausdorff dimension $HD(K)<0.172825$.
\end{proposition}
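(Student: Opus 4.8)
The plan is to run, for the alphabet $\{1,2,3,4\}$ and the symmetric block $B=\{21312,232,3,11313,31311\}$, exactly the scheme of the previous subsections. First I would invoke the classical (Perron/Bumby type) characterization to write any $m\in(M\setminus L)\cap(\sqrt{20},\sqrt{21})$ as $m=m(\underline{a})=f(\underline{a})$ for some $\underline{a}\in\Sigma(C)$, where $\Sigma(C)$ forbids the factors $14,41,24,42$; the displayed computations with Lemma \ref{l.0} just before the proposition are precisely the justification that these factors cannot occur when $m<\sqrt{21}$. Next I would check that $B$ and $C$ are symmetric (i.e. $K(B)=K^-(B)$ and $K(C)=K^-(C)$, using the palindromic shape of the admissible words together with the gluing rules ``$31311$ follows only $3$'' and ``$11313$ is followed only by $3$''), and that $c(B,C)<4.46<\sqrt{20}<m$. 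Granting these, Lemma \ref{l.key} applies with $\Sigma(B)\subset\Sigma(C)$, and its conclusion $m\in L$ is excluded; hence one of the hypotheses (i)--(iii) must fail. As in Proposition \ref{p.M-L-constraint}, hypotheses (i) and (ii) cannot both be met along two sequences $n_k,m_k\to+\infty$ simultaneously, so after transposing $\underline{a}$ if needed there is $k\in\mathbb{N}$ such that for every $n\geq k$ the finite word $\dots a_0^*\dots a_n$ either has a unique continuation into $\Sigma(C)$ with Markov value attained within a bounded window, or has exactly two such continuations $\alpha_n,\beta_n$ with $[[0;\alpha_n],[0;\beta_n]]\cap K(B)=\emptyset$.

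The combinatorial core is to convert the disjointness condition $[[0;\alpha_n],[0;\beta_n]]\cap K(B)=\emptyset$ into the three explicit families of admissible continuations listed before the proposition: $(\alpha_n,\beta_n)=(4\alpha_{n+1},3131\beta_{n+4})$; or $\alpha_n\in\{33131\alpha_{n+5},34\alpha_{n+2}\}$ with $\beta_n=2131\beta_{n+4}$; or $(\alpha_n,\beta_n)=(23\alpha_{n+2},1131\beta_{n+3})$. This amounts to an exact description of the ``gaps'' of $K(B)$ inside $K(C)$: one must determine which finite prefixes of sequences of $\Sigma(C)$ are consistent with lying outside $K(B)$, and the long blocks $21312$, $11313$, $31311$ with their gluing constraints make this bookkeeping delicate. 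I expect this step -- together with the verification $c(B,C)<4.46$, which requires locating the sequence maximizing $[0;\beta]^{-1}+[0;(\alpha\eta)^t]$ over the admissible data -- to be the main obstacle; everything downstream is mechanical.

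Once the admissible continuations are pinned down, the dimension estimate proceeds as before. I would cover the forward-value set $K=\{[a_0;a_1,\dots]:\sqrt{20}<m(\underline{a})<\sqrt{21}\}$ by the cylinders $I(a_1,\dots,a_n)$, discard the countable set of sequences whose forward continuation is eventually forced (contributing Hausdorff dimension $0$), and on each remaining Cantor piece repeatedly refine $I(a_1,\dots,a_n)$ according to the three branching patterns above. Using the recurrence $q_{j+2}=a_{j+2}q_{j+1}+q_j$ to write each length ratio $|I(a_1,\dots,a_n,\text{word})|/|I(a_1,\dots,a_n)|$ as an explicit rational function of $r=q_{n-1}/q_n\in(0,1)$, and bounding these uniformly on $[0,1]$ by $\frac{1}{15}$, $\frac{1}{516}$, $\frac{2}{11745}$, $\frac{2}{357}$, $0.003106$, $\frac{1}{144}$ (and reusing the bound $0.016134$ for the $2,3$--branch from the earlier subsections), one gets $g(s)\leq(\tfrac{1}{15})^s+(\tfrac{1}{516})^s$, $h(s)\leq(\tfrac{2}{11745})^s+(\tfrac{2}{357})^s+(0.003106)^s$ and $i(s)\leq(0.016134)^s+(\tfrac{1}{144})^s$, so $\max\{g(s),h(s),i(s)\}<1$ at $s=0.172825$. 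Since these upper bounds are continuous in $s$, the inequality persists for some $s<0.172825$, so the $s$-Hausdorff measure of $K$ is finite and hence $HD(K)<0.172825$.

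Finally I would assemble the conclusion: by symmetry of $C$ the backward coordinate $[0;a_{-1},a_{-2},\dots]$ lies in $K^-(C)=K(C)=X_4(\{14,41,24,42\})$, and $m=[a_0;a_1,\dots]+[0;a_{-1},a_{-2},\dots]\in K+X_4(\{14,41,24,42\})$ with $HD(K)<0.172825$, which is exactly the statement of the proposition.
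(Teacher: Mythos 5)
Your proposal reproduces the paper's argument step by step: same choice of $\Sigma(C)$ (sequences over $\{1,2,3,4\}$ avoiding $14,41,24,42$), same symmetric block $B=\{21312,232,3,11313,31311\}$ with the same gluing constraints, same bound $c(B,C)<4.46$, same invocation of Lemma \ref{l.key} and Proposition \ref{p.M-L-constraint} to force (up to transposition) the forward continuations of $\underline{a}$ into the gaps of $K(B)$, the same three branching families, the same length-ratio bounds $\frac{1}{15},\frac{1}{516},\frac{2}{11745},\frac{2}{357},0.003106,\frac{1}{144}$ (plus the recycled $0.016134$), and the same evaluation at $s=0.172825$. The only thing you add is the explicit continuity remark upgrading ``the $0.172825$-Hausdorff measure is finite'' to the strict inequality $HD(K)<0.172825$; the paper uses the same step implicitly (via the strict bound $<0.999997$), so this is a cosmetic clarification rather than a different route.
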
 

As usual, this proposition yields the following estimate: 

\begin{corollary}\label{c.M-L-sqrt20} $HD((M\setminus L)\cap(\sqrt{20}, \sqrt{21})) < 0.961772$.
\end{corollary}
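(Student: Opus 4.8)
The plan is to derive this estimate from Proposition~\ref{p.gap-Cantor-sqrt20} in precisely the same manner that Corollaries~\ref{c.M-L-sqrt10-sqrt13}, \ref{c.M-L-sqrt13-I} and \ref{c.M-L-sqrt13-II} were obtained from their structural counterparts. All the substantive work is already done: the dynamical input (Lemma~\ref{l.key} applied with the symmetric block $B=\{21312,232,3,11313,31311\}$ and the bound $c(B,C)<4.46<\sqrt{20}$), the combinatorial classification of the admissible continuations $\alpha_n,\beta_n$ forced by $[[0;\alpha_n],[0;\beta_n]]\cap K(B)=\emptyset$, and the covering computation giving $HD(K)<0.172825$, together reduce the problem to bounding the Hausdorff dimension of the arithmetic sum $X_4(\{14,41,24,42\})+K$.

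First I would note the obvious inclusion $X_4(\{14,41,24,42\})\subset K(\{1,2,3,4\})$, valid because every $[0;\gamma]$ with $\gamma\in\{1,2,3,4\}^{\Nset}$ avoiding $14,41,24,42$ is in particular a continued fraction with all partial quotients in $\{1,2,3,4\}$. Then I would invoke the standard sub-additivity $HD(A+B)\leq HD(A)+HD(B)$, which holds here because $A+B$ is a Lipschitz image of $A\times B$ and the Cantor sets built from continued fractions with finitely many allowed partial quotients and finitely many forbidden transitions have bounded distortion, so their Hausdorff and box dimensions coincide and $HD(A\times B)\leq HD(A)+HD(B)$.

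Finally, plugging in Hensley's rigorous estimate $HD(K(\{1,2,3,4\}))<0.788947$ from \cite{He} together with Proposition~\ref{p.gap-Cantor-sqrt20}, one concludes
$$HD((M\setminus L)\cap(\sqrt{20},\sqrt{21}))\leq HD(K(\{1,2,3,4\}))+HD(K)<0.788947+0.172825=0.961772,$$
as desired. I do not anticipate any genuine obstacle: the sole point deserving attention is the applicability of the sum-set dimension inequality, which is automatic for the Gauss-dynamical Cantor sets in play, so the corollary is immediate from the preceding analysis.
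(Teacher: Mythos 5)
Your proposal is correct and takes essentially the same route as the paper: Proposition~\ref{p.gap-Cantor-sqrt20} gives the inclusion into $X_4(\{14,41,24,42\})+K$, you pass to $K(\{1,2,3,4\})\supset X_4(\{14,41,24,42\})$, apply sub-additivity of Hausdorff dimension under arithmetic sums (valid since the Gauss--Cantor set has coinciding Hausdorff and box dimensions), and plug in Hensley's bound $0.788947$ together with $HD(K)<0.172825$. The paper's proof is just a more terse version of the same computation.
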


\begin{proof} By Proposition \ref{p.gap-Cantor-sqrt20} and Hensley's estimate \cite{He} for $HD(K(\{1,2,3,4\}))$, one has 
\begin{eqnarray*}
HD((M\setminus L)\cap(\sqrt{20},\sqrt{21})) &\leq& HD(K(\{1,2,3,4\}))+HD(K) \\ 
&<& 0.788947 + 0.172825 = 0.961772 
\end{eqnarray*}
This finishes the argument. 
\end{proof}

\subsection{End of proof of Theorem \ref{t.prrova}}

By Corollaries \ref{c.M-L-sqrt10-sqrt13}, \ref{c.M-L-sqrt13-I}, \ref{c.M-L-sqrt13-II}, \ref{c.M-L-sqrt20}, we have that 
$$HD((M\setminus L)\cap (\sqrt{10}, \sqrt{21})) < 0.986927$$

On the other hand, Freiman \cite{Fr73b} and Schecker \cite{Sch77} proved that $[\sqrt{21},\infty)\subset L$. Therefore, $(M\setminus L)\cap [\sqrt{21},\infty)=\emptyset$. 

Also, the proof of Theorem 1 in Chapter 6 of Cusick--Flahive's book \cite{CF} gives the (rigorous) estimate 
$$HD(M\cap(-\infty,\sqrt{10})) < 0.93$$
for the Hausdorff dimension of $M\cap(-\infty,\sqrt{10})$. 

By putting these facts together, we obtain the desired conclusion, namely, 
$$HD(M\setminus L) < 0.986927$$

\appendix

\section{Structure of $M\setminus L$ near $3.7$}\label{s.Cantor-Cusick}

\subsection{The largest interval $J$ avoiding $L$ containing $C$} 

Consider the quantities 
$$j_0:=\lambda_0(\overline{33^*22212}) = 3.70969985967967\dots$$
and 
$$j_1:=\lambda_0(\overline{21}12212332221233^*22212332221233321\overline{12}) = 3.70969985975042\dots$$

By Corollary \ref{c.7}, Lemma \ref{l.8}, and the proof of Corollary \ref{c.replication}, we have that 
\begin{proposition}\label{p.almost-replication} If $j_0 < m(a) = \lambda_0(a) < 3.7096998599$, then  (up to transposition) 
\begin{itemize}
\item either $a=\dots12212332221233^*222123322212\dots$
\item or $a=\dots 23322212332221233^*222123322212\dots$
and the vicinity of the position $-7$ is $\dots2332221233^*222123322\dots$
\end{itemize} 
\end{proposition}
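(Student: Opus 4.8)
The plan is to run the same machinery that proved Corollary \ref{c.7} and Corollary \ref{c.replication}, but now tracking a \emph{wider} window $(j_0, 3.7096998599)$ instead of the narrow window $(3.7096992, 3.7096999)$, and allowing the ambiguity that a second candidate continuation survives. First I would apply Corollary \ref{c.7} (whose hypotheses are met once $m(a)=\lambda_0(a)$ lies in its window, which contains our interval): this pins $a=\dots 2332221233^*222123322\dots$ up to transposition. So the central block is already forced, and the issue is only what happens to the left of position $-9$ and to the right of position $9$.

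Next I would re-examine the chain of deductions in the proof of Corollary \ref{c.replication}. That proof uses Lemma \ref{l.2}(v), Lemma \ref{l.3}(vi), (vii), Lemma \ref{l.4}(x), (xii), (xiii), (xvii), Lemma \ref{l.5}(xxii), Lemma \ref{l.7}(xxx), and the four parts of Lemma \ref{l.8}, together with Lemma \ref{l.replication}. The key point is which of these estimates have a lower bound $\le 3.7096998599$ (so they remain decisive on our larger window) and which have lower bound between $3.7096998599$ and $3.70969985975033$ (so they become \emph{inconclusive} and therefore leave an extra branch alive). Concretely, Lemma \ref{l.8}(xxxvi), (xxxvii), (xxxviii) all give $>3.70969986>3.7096998599$, so they still fire; Lemma \ref{l.8}(xxxix) gives $>3.7096998599$, so it still fires; but Lemma \ref{l.replication} gives only $>3.70969985975033$, which is \emph{not} $>3.7096998599$, so the step in Corollary \ref{c.replication} that used Lemma \ref{l.replication} to kill the branch $\dots2212332221233^*222123322212\dots$ versus $\dots22212332221233^*222123322212\dots$ no longer applies. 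Tracing this, after forcing $a=\dots2212332221233^*222123322212\dots$ (which goes through exactly as before, since all the lemmas invoked up to that point have lower bounds $>3.7096998599$), the next step branches: either $a=\dots12212332221233^*222123322212\dots$ (this is the first alternative in the statement, and here I stop because the relevant separating estimate, Lemma \ref{l.replication}, is no longer strong enough on this window), or $a=\dots22212332221233^*222123322212\dots$, and then Lemma \ref{l.7}(xxx) forces the next symbol to be $3$, Lemma \ref{l.1}(i),(ii) and Lemma \ref{l.3}(vi) force $\dots23322212332221233^*222123322212\dots$, recovering the second alternative. Finally, in that second case, applying Corollary \ref{c.7} again centered at position $-7$ (whose local window hypotheses are satisfied because all $\lambda_i$ with $|i-(-7)|\le 9$ are below $3.7096998599<3.7096999$) yields that the vicinity of position $-7$ is $\dots2332221233^*222123322\dots$.

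The main obstacle I anticipate is purely bookkeeping: one must verify that at \emph{every} step the hypothesis ``$\lambda_i(a)<(\text{threshold})$ for $|i|\le(\text{something})$'' needed to invoke each corollary is implied by the global assumption $\lambda_0(a)=m(a)<3.7096998599$ (which bounds \emph{all} $\lambda_i(a)$ simultaneously, since $m(a)=\sup_i \lambda_i(a)$). Since $3.7096998599$ is below every auxiliary threshold appearing in Corollaries \ref{c.1}--\ref{c.7} ($3.71$, $3.7097$, $3.7099$, etc.), this is automatic, which is exactly why the argument goes through — but one should state it explicitly once rather than repeat it. A secondary point is that the hypothesis $j_0<\lambda_0(a)$ is used only to apply Corollary \ref{c.7} at the start (its window is $(3.7096992,3.7096999)$, so $j_0\approx 3.70969985968>3.7096992$ places us inside), and then the replication steps do not need the lower bound again; I would make sure the chain of corollaries is invoked in an order where each one's lower-bound hypothesis has already been secured by the previous step.
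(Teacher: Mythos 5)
Your proposal is correct and is essentially the argument the paper has in mind: the paper's one-line proof (``$m(a)<3.7096998599$ allows to use all results from Sections \ref{s.uniqueness} and \ref{s.replication} except for Lemma \ref{l.replication}'') is exactly your observation that every threshold appearing in the chain through Corollary \ref{c.7} and the proof of Corollary \ref{c.replication} lies above $3.7096998599$, with the sole exception of Lemma \ref{l.replication}, whose failure to fire is what leaves the branch $a=\dots12212332221233^*222123322212\dots$ alive. One small simplification: the final ``vicinity of position $-7$'' clause does not need a fresh application of Corollary \ref{c.7} (which would anyway require verifying the lower bound $\lambda_{-7}(a)>3.7096992$); it can be read off directly from the explicit word $a=\dots23322212332221233^*222123322212\dots$, since the segment centered at position $-7$ is literally $\dots2332221233^*222123322\dots$.
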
 

Indeed, this happens because $m(a)<3.7096998599$ allows to use all results from Sections \ref{s.uniqueness} and \ref{s.replication} \emph{except} for Lemma \ref{l.replication}. 

\begin{proposition}\label{p.j1} If $m(a)<3.71$ and $a$ contains $12212332221233^*222123322212$, then $m(a)\geq j_1$. 
\end{proposition}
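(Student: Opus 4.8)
The plan is to extend the word $w:=12212332221233^*222123322212$ on both sides step by step, using the elementary comparison Lemma \ref{l.0} together with the numerical inequalities already assembled in Lemmas \ref{l.1}--\ref{l.8} and Lemma \ref{l.replication}, exactly as in the proof of Corollary \ref{c.replication}, but now pushing the replication one full period further in each direction. Concretely, assume $m(a)<3.71$ and that $a$ contains $w$ at some position; since $3.71>3.7096998599$ would be too weak, I first note that if $m(a)\geq 3.7096998599$ we are already done because $j_1<3.7096998599$... wait — rather, the useful dichotomy is: either $m(a)\geq 3.7096998599>j_1$ and we are done, or $m(a)<3.7096998599$ and then Proposition \ref{p.almost-replication} (equivalently, all the machinery of Sections \ref{s.uniqueness}--\ref{s.replication} except Lemma \ref{l.replication}) applies to every relevant window of $a$.

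So assume $m(a)<3.7096998599$. First I would run the argument of Corollary \ref{c.replication} to the left of $w$: the occurrence of $12212332221233^*222123322212$ forces, by Lemma \ref{l.1}(i),(ii), Lemma \ref{l.3}(vi),(vii), Lemma \ref{l.7}(xxx), and Lemma \ref{l.5}(xxii), the extension $a=\dots\overline{3322212}\,12212332221233^*222123322212\dots$ on the left — i.e. the left side becomes eventually periodic with period $3322212$, just as in Corollary \ref{c.replication}. Symmetrically, to the right of $w$ I would extend $\dots222123322212\dots$ one more period: using Corollary \ref{c.7} applied to the window around position $+7$ (whose neighbourhood is forced to be $\dots2332221233^*222123322\dots$), then Lemma \ref{l.2}(v), Lemma \ref{l.8}, and Lemma \ref{l.4}(x),(xii),(xiii),(xvii) to propagate, one is forced into $a=\dots12212332221233^*2221233222123322212\dots$, and iterating, $a$ must contain the longer block $12212332221233^*22212332221233321$ (the final $21$ appearing because, as in Corollary \ref{c.replication}, Lemma \ref{l.4}(x),(xii) force $\dots333\dots$ cannot persist and the symmetric obstruction from Lemma \ref{l.replication}-type estimates forces the transition to $21$).

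Once $a$ is pinned down to contain the symmetric-looking finite word $\overline{21}\,12212332221233^*22212332221233321\,\overline{12}$ on both sides — more precisely, once the left tail is $\overline{3322212}$ and the right tail has been forced through $\dots2221233321\dots$ and then into $\overline{12}$ by Lemma \ref{l.1}(i) and Lemma \ref{l.4}(xi)-type bounds — the Markov value $\lambda_0$ of the resulting (doubly eventually periodic) sequence is precisely $j_1$ up to the tail, and since any admissible completion can only raise $\lambda_i$ at some position (this is the standard ``modification forces an increase'' principle used throughout, cf. the footnote on non semi-symmetric words of odd length), we get $m(a)\geq \lambda_0(\overline{21}12212332221233^*22212332221233321\overline{12})=j_1$.

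The main obstacle I anticipate is purely bookkeeping: making sure that at each forced extension step the relevant $\lambda_i$ with $|i|$ not too large stays below the working threshold $3.7096998599$ (or $3.71$), so that the quoted lemmas genuinely apply, and correctly tracking the positions of the asterisks as the word grows — in particular checking that the replicated period $3322212$ on the left and the near-symmetric block ending in $\dots233321$ on the right are the \emph{only} admissible continuations at every branch point, with no case overlooked. This is exactly the type of finite (if tedious) verification already carried out in Corollaries \ref{c.replication} and \ref{c.7}, so no genuinely new idea is needed; the estimate $j_1=3.70969985975042\dots$ then pops out of a direct continued-fraction computation.
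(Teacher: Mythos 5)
The paper's proof of Proposition \ref{p.j1} is a short, direct estimate: given that $a$ contains $12212332221233^*222123322212$ with $m(a)<3.71$, one bounds $\lambda_0(a)$ from below by minimizing each continued-fraction tail separately. The left tail $[0;3,2,1,2,2,2,3,3,2,1,2,2,1,\dots]$ is at least $[0;3,2,1,2,2,2,3,3,2,1,2,2,1,\overline{1,2}]$ once Lemma \ref{l.1}(i),(ii) forbid the dangerous patterns; the right tail is forced, via Lemma \ref{l.1}(i),(ii), Lemma \ref{l.2}(v), Lemma \ref{l.3}(vi), to begin $3,3,3,2,1,\dots$ and then bounded below by $\overline{1,2}$, giving $\lambda_0(a)\geq [3;2,2,2,1,2,3,3,2,2,2,1,2,3,3,3,2,1,\overline{1,2}]+[0;3,2,1,2,2,2,3,3,2,1,2,2,1,\overline{1,2}]=j_1$.

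Your proposal takes a genuinely different route, and unfortunately it does not work, for several compounding reasons. First, the replication machinery is being applied backwards: Corollary \ref{c.replication} shows that $\dots2332221233^*222123322\dots$ (with $\lambda_i<3.70969985975033$ in a window) continues on the \emph{left} as $\dots23322212332221233^*\dots$, and Lemma \ref{l.replication} is exactly the step used to \emph{exclude} the competitor $\dots12212332221233^*\dots$. The word $w$ in the hypothesis of Proposition \ref{p.j1} is therefore precisely the exceptional case that the replication corollary rules out, not one that it extends; you cannot ``run Corollary \ref{c.replication} to the left of $w$'' to get $\overline{3322212}$ there. Second, your dichotomy threshold is off: $3.7096998599>3.70969985975033$, so knowing $m(a)<3.7096998599$ does \emph{not} supply the hypothesis $\lambda_i(a)<3.70969985975033$ needed in Corollary \ref{c.replication} and Lemma \ref{l.replication}; the case split you set up does not feed correctly into the cited lemmas. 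Third, there is an internal inconsistency in your target: you say the left tail is forced to be $\overline{3322212}$, yet $j_1$ is by definition $\lambda_0(\overline{21}12212332221233^*22212332221233321\overline{12})$, whose left tail beyond position $-13$ is $\overline{21}$ (equivalently $[0;\dots,1,\overline{1,2}]$), not $\overline{3322212}$; the number you would actually produce is not $j_1$. Finally, the concluding appeal to a ``modification forces an increase'' principle is not one of the proven lemmas and would need a separate argument. The intended proof avoids all of this: it does not invoke the replication corollaries at all, but simply computes a two-sided continued-fraction lower bound for $\lambda_0$ directly, and the extremal continuations on both sides are $\overline{1,2}$, which is how $j_1$ appears.
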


\begin{proof} By Lemma \ref{l.1} (i), (ii), 
\begin{eqnarray*}
\lambda_0(a)&=&[3;2,2,2,1,2,3,3,2,2,2,1,2,\dots]+[0;3,2,1,2,2,2,3,3,2,1,2,2,1,\dots] \\ 
&\geq& [3;2,2,2,1,2,3,3,2,2,2,1,2,3,3,3,2,\dots] + [0;3,2,1,2,2,2,3,3,2,1,2,2,1,\overline{1,2}] 
\end{eqnarray*}
By Lemma \ref{l.2} (v) and Lemma \ref{l.3} (vi), 
\begin{eqnarray*}
\lambda_0(a)&\geq&[3;2,2,2,1,2,3,3,2,2,2,1,2,3,3,3,2,\dots] + [0;3,2,1,2,2,2,3,3,2,1,2,2,1,\overline{1,2}]  \\ 
&\geq& [3;2,2,2,1,2,3,3,2,2,2,1,2,3,3,3,2,1,\dots] + [0;3,2,1,2,2,2,3,3,2,1,2,2,1,\overline{1,2}] 
\end{eqnarray*}
By Lemma \ref{l.1} (i), we conclude that 
\begin{eqnarray*}
\lambda_0(a)&\geq& [3;2,2,2,1,2,3,3,2,2,2,1,2,3,3,3,2,1,\dots] + [0;3,2,1,2,2,2,3,3,2,1,2,2,1,\overline{1,2}] \\ 
&\geq& [3;2,2,2,1,2,3,3,2,2,2,1,2,3,3,3,2,1,\overline{1,2}] + [0;3,2,1,2,2,2,3,3,2,1,2,2,1,\overline{1,2}] \\ 
&=& j_1 
\end{eqnarray*}
\end{proof}

By putting together Propositions \ref{p.almost-replication} and \ref{p.j1}, we obtain the following strengthening of Propositions \ref{p.1} and \ref{p.2}:

\begin{proposition}\label{p.1'} The open interval $J=(j_0, j_1)$ containing $C$ is disjoint from $L$. 
\end{proposition}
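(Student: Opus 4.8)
The plan is to show that any $\ell \in L$ lying in the closed interval $[j_0, j_1]$ leads to a contradiction, which proves that $J = (j_0, j_1)$ is disjoint from $L$. So suppose $\ell \in L \cap [j_0, j_1]$ and pick $a \in \{1,2,3\}^{\mathbb{Z}}$ with $\ell = \limsup_{n\to\infty} \lambda_n(a)$ (we are in the regime $\ell < 3.71$, so Corollary~\ref{c.1} keeps us inside the alphabet $\{1,2,3\}$ after transposition). Since $\ell$ is a $\limsup$, there are infinitely many indices $n$ with $\lambda_n(a)$ arbitrarily close to $\ell$, and in particular $> j_0$; by the same token $\lambda_i(a) \leq \ell \leq j_1 < 3.7096998599$ for all $i$ large enough. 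Applying Proposition~\ref{p.almost-replication} at one such index $n$, the word $a$ around position $n$ is (up to transposition) either $\dots 12212332221233^*222123322212\dots$ or $\dots 23322212332221233^*222123322212\dots$ with the displayed periodic prefix.

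Now I would split into the two cases. In the first case — $a$ contains the block $12212332221233^*222123322212$ — Proposition~\ref{p.j1} immediately gives $m(a) \geq j_1$. But $m(a) = \sup_i \lambda_i(a) \geq \ell$, and more to the point the offending $\lambda_0$ estimate in the proof of Proposition~\ref{p.j1} forces some $\lambda_i(a) \geq j_1 > 3.70969985975033$, contradicting $\ell \leq j_1$ being the $\limsup$ (since that $\lambda_i$ exceeds $\ell$, fine — but the real contradiction is that $j_1 = \lambda_0(\overline{21}12212332221233^*22212332221233321\overline{12})$ is itself \emph{not} in $L$, being an isolated-type value; more cleanly: if $\ell < j_1$ strictly we are done, and the boundary case $\ell = j_1$ is excluded because $J$ is \emph{open}, so it suffices to contradict $\ell < j_1$, and Proposition~\ref{p.j1} does exactly that by producing a $\lambda_i(a) \geq j_1 > \ell$). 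In the second case, the word is periodic on the left with the block $\overline{3322212}$, exactly as in Corollary~\ref{c.replication}; iterating Corollaries~\ref{c.7} and~\ref{c.replication} (the replication mechanism) as in the proof of Proposition~\ref{p.1}, one propagates the periodic pattern indefinitely to the left, and since $\limsup_{n\to\infty}\lambda_n(a) = \ell$ with the forced structure one is driven to $\ell = \lambda_0(\overline{33^*22212}) = j_0 = 3.70969985967967\dots < j_0$ is false — rather, $\ell = j_0$, which again is excluded because $J = (j_0, j_1)$ is open at the left endpoint. Either way, no $\ell \in (j_0, j_1)$ can lie in $L$.

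The main obstacle is bookkeeping rather than conceptual: one must check that the hypotheses of Proposition~\ref{p.almost-replication} (namely $j_0 < \lambda_0(a) < 3.7096998599$, i.e.\ all the lemmas of Sections~\ref{s.uniqueness} and~\ref{s.replication} \emph{except} Lemma~\ref{l.replication}) really are available at every index $n$ where $\lambda_n(a)$ is close to $\ell$, and that the two cases genuinely exhaust the possibilities and close up under the iteration. The delicate point is that we may \emph{not} invoke Lemma~\ref{l.replication} (which needs the threshold $3.70969985975033$ rather than the weaker $3.7096998599$), which is precisely why the first case cannot be ruled out structurally and must instead be killed by the explicit lower bound $j_1$ of Proposition~\ref{p.j1}; recognizing that this numerical bound $j_1$ is the right substitute for the missing lemma is the heart of the argument. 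Once that substitution is in place, the conclusion $J \cap L = \emptyset$ follows by combining Propositions~\ref{p.almost-replication} and~\ref{p.j1} exactly as indicated.
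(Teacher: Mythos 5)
Your proof is correct in outline but takes a genuinely different route from the paper's. You work with the defining $\limsup$ characterization of $L$ (as in the paper's proof of Proposition~\ref{p.1}), whereas the paper's proof of Proposition~\ref{p.1'} instead exploits the fact---already used in Lemma~\ref{l.key}---that $L$ is the \emph{closure of the Markov values of periodic sequences} (Theorem~2 in Chapter~3 of \cite{CF}): one assumes $j_0 < m(a) < j_1$ for a \emph{periodic} $a$, applies Proposition~\ref{p.j1} to rule out the first alternative of Proposition~\ref{p.almost-replication}, iterates the second alternative (using the periodicity of $a$ to close the argument cleanly) to force $a=\overline{3322212}$ and hence $m(a)=j_0$, a contradiction; density then kills $J\cap L$. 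The periodic reduction buys a tidy termination for the iteration, whereas your $\limsup$ version needs an implicit compactness step (passing to a cluster point $b$ of the shifts $\sigma^{n}(a)$ with $m(b)=\lambda_0(b)=\ell$) so that the hypothesis $m(\cdot)=\lambda_0(\cdot)<j_1$ of Proposition~\ref{p.almost-replication}---and the hypothesis $m(\cdot)<3.71$ of Proposition~\ref{p.j1}---are actually available; without that, the $\lambda_i\geq j_1$ produced by Proposition~\ref{p.j1} at a single index does not yet contradict the $\limsup$. You should also not say you ``iterate Corollaries~\ref{c.7} and~\ref{c.replication}'': Corollary~\ref{c.replication} requires the sharper threshold $3.70969985975033<j_1$ and is precisely what is \emph{unavailable} here, as you yourself note a few lines later; the iteration must be of the Proposition~\ref{p.almost-replication}/Proposition~\ref{p.j1} pair throughout. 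With those two repairs (compactness reduction, and consistently citing the weaker replication proposition), your argument goes through and is a legitimate alternative to the paper's.
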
 

\begin{proof} On one hand, the fact that $J$ contains $C$ is an immediate consequence of Proposition \ref{p.2}. On the other hand, if $j_0<m(a)<j_1$ for a periodic sequence $a$, then, thanks to Proposition \ref{p.j1}, we would be able to iteratively apply Proposition \ref{p.almost-replication} to obtain that $m(a)=\lambda_0(\overline{33^*22212})=j_0$, a contradiction. Since the Lagrange spectrum is the closure of Markov values associated to periodic sequences, we derive that $J\cap L=\emptyset$. 
\end{proof}

Since it is not hard to see that $j_0$ and $j_1$ belong to $L$, the previous proposition implies that: 

\begin{corollary} $J$ is the largest interval containing $C$ which is disjoint from $L$. 
\end{corollary}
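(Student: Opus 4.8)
The plan is to deduce the statement from Proposition~\ref{p.1'} together with the fact, stated just above, that $j_0\in L$ and $j_1\in L$, by an elementary connectedness argument. Granting $j_0,j_1\in L$, let $I$ be any interval with $C\subseteq I$ and $I\cap L=\emptyset$. Since $C$ is non-empty and, by Proposition~\ref{p.1'}, $C\subseteq(j_0,j_1)$, the interval $I$ meets $(j_0,j_1)$; since $j_0,j_1\in L$, the interval $I$ contains neither $j_0$ nor $j_1$; being connected, $I$ is therefore contained in $(j_0,j_1)=J$. As $J$ is itself disjoint from $L$ and contains $C$ (again by Proposition~\ref{p.1'}), this shows that $J$ is the largest interval containing $C$ that avoids $L$.

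It remains to justify $j_0,j_1\in L$. For $j_0=\lambda_0(\overline{33^*22212})$ this is immediate: the sequence $\overline{3322212}$ is periodic, so its Lagrange value equals its Markov value, which is exactly $j_0$ by the computation made in the proof of Proposition~\ref{p.1}; hence $j_0\in L$.

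For $j_1$ I would argue by approximation through periodic sequences. Let $\underline a$ be the bi-infinite sequence defining $j_1$, so that $\lambda_0(\underline a)=j_1$; the window of $\underline a$ around the origin is a fixed finite block $B$ (its non-$\{1,2\}$ part), while the two tails of $\underline a$ are the periodic $\{1,2\}$-strings $\overline{21}$ and $\overline{12}$. Using Proposition~\ref{p.j1} (which gives $m(\underline a)\ge j_1$) together with the estimates of Sections~\ref{s.uniqueness} and~\ref{s.replication} and Perron's bound on the $\{1,2\}$-tails (which give $\lambda_n(\underline a)\le\lambda_0(\underline a)$ for every $n$), one gets $m(\underline a)=j_1$. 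For large $N$ form the periodic sequence $Q_N:=\overline{a_{-N}\cdots a_N}$. Since $a_{-N}\cdots a_N$ agrees with $\underline a$ on $[-N,N]$, Lemma~\ref{l.0} gives $\lambda_0(Q_N)\to j_1$, so $m(Q_N)\ge\lambda_0(Q_N)\to j_1$; conversely, any position $j$ at distance $\ge d$ from the ends $\pm N$ of the window satisfies $\lambda_j(Q_N)\le\lambda_j(\underline a)+2^{1-d}\le j_1+2^{1-d}$ by Lemma~\ref{l.0}, while the remaining positions (those near $\pm N$) sit deep inside the $\{1,2\}$-stretches of $Q_N$ and hence have $\lambda_j(Q_N)<\sqrt{12}+2^{-N}<j_1$. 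Therefore $m(Q_N)\to j_1$; since each $Q_N$ is periodic, $m(Q_N)\in L$, and since $L$ is closed we conclude $j_1=\lim_N m(Q_N)\in L$.

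The only delicate point is this last one: one must check that periodizing the window $a_{-N}\cdots a_N$ does not create, either at the junctions between consecutive copies or anywhere else, a position with $\lambda$-value exceeding $j_1$. This is exactly where it matters that both tails of $\underline a$ are the $\{1,2\}$-periodic words $\overline{21}$ and $\overline{12}$: the periodization introduces only $\{1,2\}$-blocks away from the fixed central block $B$, so Perron's bound $\sqrt{12}<j_1$ controls those positions, while the positions near $B$ converge to the corresponding values of $\underline a$, all of which are $\le j_1$. Everything else is routine bookkeeping with Lemma~\ref{l.0}.
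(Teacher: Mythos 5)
Your proof is correct and takes essentially the same route as the paper: it is the immediate consequence of Proposition~\ref{p.1'} together with the endpoint membership $j_0,j_1\in L$, and you supply the standard argument for the latter (for $j_0$ by periodicity, for $j_1$ by periodizing the central window of the eventually periodic sequence and letting the period length go to infinity, controlling the junctions via Perron's $\sqrt{12}$ bound on $\{1,2\}^{\mathbb{Z}}$). This is precisely what the text leaves implicit behind ``it is not hard to see that $j_0$ and $j_1$ belong to $L$''.
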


\subsection{The largest known element of $M\setminus L$}

Consider the quantity 

$$\Upsilon := \lambda_0(\overline{3322212}33^*222123322212212121\overline{12}) = 3.7096998597503806\dots$$

\begin{proposition} $\Upsilon$ is the largest element of $(M\setminus L)\cap J$. 
\end{proposition}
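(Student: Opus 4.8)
The plan is to show two things: first, that $\Upsilon\in M\setminus L$, and second, that every element of $(M\setminus L)\cap J$ is $\leq\Upsilon$. For the first part, I would compute $m(a)$ for the bi-infinite sequence $a=\overline{3322212}33^*222123322212212121\overline{12}$ and check that its Markov value equals $\lambda_0(a)=\Upsilon$ by verifying (using Lemma \ref{l.0} together with the local estimates in Lemmas \ref{l.1}--\ref{l.8} exactly as in Propositions \ref{p.1} and \ref{p.2}) that $\lambda_i(a)<\Upsilon$ for all $i\neq 0$; this shows $\Upsilon\in M$. Since $\Upsilon\in J$ and $J\cap L=\emptyset$ by Proposition \ref{p.1'}, we get $\Upsilon\in M\setminus L$ for free.

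For the upper bound, suppose $m\in(M\setminus L)\cap J$, so $m=m(a)=\lambda_0(a)$ for some $a\in\{1,2,3\}^{\mathbb Z}$ (after shifting so the sup is attained at position $0$), and $j_0<m<j_1$. By Proposition \ref{p.almost-replication} and its iterative use (as in the proof of Proposition \ref{p.1'}), the sequence $a$ must contain arbitrarily long blocks of the form $\overline{3322212}33^*222123322212$ to the left of position $0$ — more precisely, $a=\overline{3322212}33^*2221233222\dots$ forced on the left. The point is that if $m<j_1$, the replication mechanism of Section \ref{s.replication} forces periodicity on the left all the way out, so $a=\overline{3322212}33^*222123322212\,\omega$ for some one-sided continuation $\omega\in\{1,2,3\}^{\mathbb N}$ starting $222123322212$. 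Then the problem reduces to: among all admissible right-continuations $\omega$ with $\lambda_i(a)<j_1$ for all $i$, which gives the largest value of $\lambda_0(a)$? Since the left tail is now fixed and equals $[0;3,2,1,2,2,2,3,3,2,1,2,2,1,2,3,3,3,2,1,\dots]$ (the reading of $\overline{2221233}$ backwards continued appropriately), $\lambda_0(a)=[3;2,2,2,1,2,3,3,2,2,2,1,2,\dots]+(\text{fixed left tail})$ is an increasing-or-decreasing function of the digits of $\omega$ in the usual alternating way, and one optimizes digit by digit subject to the constraint that no $\lambda_i$ exceeds $j_1$. The claim is that this optimization yields exactly $\omega=222123322212\,212121\overline{12}$, i.e. $a$ is the defining sequence of $\Upsilon$, so $m=\lambda_0(a)\leq\Upsilon$.

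The main obstacle will be the digit-by-digit optimization in the second part: one must carefully enumerate the constrained continuations, showing at each step that pushing a digit in the direction that would increase $\lambda_0$ either violates $\lambda_i<j_1$ at some nearby position $i$ or leads (by a further application of the Section \ref{s.replication} lemmas) to a sequence whose Markov value is attained elsewhere and equals something $\leq\Upsilon$. Concretely, after the forced prefix one examines the choices at positions $1,2,\dots$ of $\omega$: the estimates analogous to Lemmas \ref{l.5}--\ref{l.7} (with the threshold $j_1$ in place of $3.7097$) will rule out all branches except the one tracking $212121\overline{12}$, because any attempt to insert a $3$ or to prolong a $22$-block creates either a forbidden high $\lambda_i$ or reproduces a copy of the $\overline{3322212}$ period on the right, which by Proposition \ref{p.j1} would force $m(a)\geq j_1$, contradicting $m\in J$. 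Once the continuation is pinned down to $212121\overline{12}$ (the "$\overline{12}$" tail being forced because $\{1,2\}$-sequences have Markov value $\leq\sqrt{12}<j_0$, so they cannot raise the value and the alternating optimization selects $\overline{12}$), we conclude $a$ equals the $\Upsilon$-sequence and hence $m=\Upsilon$; combined with $\Upsilon\in M\setminus L$ this shows $\Upsilon=\max\big((M\setminus L)\cap J\big)$.
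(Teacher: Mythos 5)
Your proposal is correct and follows essentially the same route as the paper: combine Propositions \ref{p.almost-replication} and \ref{p.j1} to force the left tail of any $a$ with $m(a)=\lambda_0(a)\in J$ to be the periodic $\overline{3322212}$, then pin down the right continuation digit by digit by showing that any deviation pushes some $\lambda_i$ beyond $j_1$ (so past $J$), yielding $m(a)\leq\Upsilon$. The only presentational difference is that you make explicit the easy verification that $\Upsilon\in M\setminus L$ (via $\Upsilon\in J$ and $J\cap L=\emptyset$), which the paper leaves implicit.
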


\begin{proof} Given $a\in\{1,2,3\}^{\mathbb{Z}}$ with $m(a)=\lambda_0(a)\in J$, we can apply Propositions \ref{p.j1} and \ref{p.almost-replication} to obtain that (up to transposition) 
$$m(a)=[3;2,2,2,1,2,3,3,2,2,2,1,2,\dots]+[0;3,\overline{2,1,2,2,2,3,3}]$$ 

If $a_{13}=1$, then $m(a)\geq [3;2,2,2,1,2,3,3,2,2,2,1,2,1,\overline{1,2}]+[0;3,\overline{2,1,2,2,2,3,3}] = 3.7096998599\dots$, a contradiction. Hence, 
$$m(a)\leq [3;2,2,2,1,2,3,3,2,2,2,1,2,2,\dots]+[0;3,\overline{2,1,2,2,2,3,3}]$$

If $a_{14}\in\{2,3\}$, then $m(a)\geq [3;2,2,2,1,2,3,3,2,2,2,1,2,2,2,3,3,\overline{2,1}]+[0;3,\overline{2,1,2,2,2,3,3}] = 3.709699859799\dots$, a contradiction. Thus, 
$$m(a)\leq [3;2,2,2,1,2,3,3,2,2,2,1,2,2,1\dots]+[0;3,\overline{2,1,2,2,2,3,3}]$$

If $a_{15}=1$, then $m(a)\geq [3;2,2,2,1,2,3,3,2,2,2,1,2,2,1,1,\overline{1,2}]+[0;3,\overline{2,1,2,2,2,3,3}] = 3.709699859765\dots$, a contradiction. Therefore, 
$$m(a)\leq [3;2,2,2,1,2,3,3,2,2,2,1,2,2,1,2,\dots]+[0;3,\overline{2,1,2,2,2,3,3}]$$

If $a_{16}\in\{2,3\}$, then $m(a)\geq [3;2,2,2,1,2,3,3,2,2,2,1,2,2,1,2,2,\overline{3,1}\dots] + [0;3,\overline{2,1,2,2,2,3,3}] = 3.709699859753\dots$, a contradiction. So, 
$$m(a)\leq [3;2,2,2,1,2,3,3,2,2,2,1,2,2,1,2,1,\dots]+[0;3,\overline{2,1,2,2,2,3,3}]$$

If $a_{17}=1$, or $a_{17}=2$ and $a_{18}\in\{2,3\}$, then $m(a)\geq [3;2,2,2,1,2,3,3,2,2,2,1,2,2,1,2,1,2,2,\overline{3,1}]+[0;3,\overline{2,1,2,2,2,3,3}] = 3.70969985975049\dots$, a contradiction. Hence, 
$$m(a)\leq [3;2,2,2,1,2,3,3,2,2,2,1,2,2,1,2,1,2,1,\dots]+[0;3,\overline{2,1,2,2,2,3,3}]$$

It follows that 
$$m(a)\leq [3;2,2,2,1,2,3,3,2,2,2,1,2,2,1,2,1,2,1,\overline{1,2}]+[0;3,\overline{2,1,2,2,2,3,3}] = \Upsilon$$
This completes the argument. 
\end{proof}

\subsection{The Hausdorff dimension of $M\setminus L$ near $3.7$}

As it is explained in our previous works \cite{MaMo1} and \cite{MaMo2}, we have that 
$$HD((M\setminus L)\cap J) = HD(\Omega)$$ 
where $\Omega$ is the Gauss--Cantor set 
$$\Omega:=\{[0;\gamma]:\gamma\in\{1,2,3\}^{\mathbb{N}} \textrm{ doesn't contain subwords in } P \}$$ with $P$ consisting of ``big words''\footnote{In the sense that the appearance of these words implies that the value of $\lambda_0$ surpasses $j_1$.} appearing in items (i), (ii), (v), (vi), (vii), (x), (xii), (xiii), (xiv), (xv), (xvii), (xxii), (xxv), (xxvi), (xxviii), (xxix), (xxx), (xxxi), (xxxiii), (xxxv), (xxxvi), (xxxvii), (xxxviii), (xxxix) in Section \ref{s.uniqueness} and \ref{s.replication} and their transposes, and the ``self-replicating'' word $2332221233222123322$ in Corollary \ref{c.replication} and its transpose. 

\section{Empirical derivation of $HD(M\setminus L)<0.888$}\label{s.prova}

The algorithm developed by Jenkinson--Pollicott in \cite{JP01} allows to give \emph{heuristic} estimates for the Hausdorff dimensions of certain Cantor sets of real numbers whose continued fraction expansions satisfy some constraints. Furthermore, Jenkinson--Pollicott shows in \cite{JP16} how these \emph{empirical} estimates can be converted in \emph{rigorous} estimates. 

In this section, we will explore Jenkinson--Pollicott algorithm to give an \emph{empirical} derivation of the following bound: 

\begin{equation}\label{e.prova} 
HD(M\setminus L) < 0.888
\end{equation}

\subsection{Heuristic estimates for $HD((M\setminus L)\cap (-\infty, \sqrt{13}))$} 

Consider again the subshift $\Sigma(B):=\{11, 22\}^{\mathbb{Z}}$ of $\Sigma(C)=\{1,2\}^{\mathbb{Z}}$. The quantity $c(B,C)$ introduced above is 
$$c(B,C) = [2;\overline{1,1}] + [0;2,\overline{2,1}] < 3.0407<3.06$$

This refined information on $c(B,C)$ allows us to improve Proposition \ref{p.gap-Cantor-sqrt12} and Corollary \ref{c.M-L-sqrt10-sqrt13}. Indeed, by repeating the analysis of Subsection \ref{ss.basic-argument} with this stronger estimate on $c(B,C)$, one gets the following result:

\begin{proposition} $(M\setminus L)\cap(3.06, \sqrt{13})\subset K(\{1,2\})+K$ where $K$ is a set of Hausdorff dimension $HD(K)<0.174813$. In particular, $HD((M\setminus L)\cap(3.06,\sqrt{13})) < 0.706104$.
\end{proposition}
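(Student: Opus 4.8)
The plan is to re-run the argument of Subsection~\ref{ss.basic-argument} almost word for word, the only new ingredient being a sharper evaluation of the constant $c(B,C)$ attached to $\Sigma(B)=\{11,22\}^{\mathbb{Z}}\subset\Sigma(C)=\{1,2\}^{\mathbb{Z}}$. First I would take $m\in(M\setminus L)\cap(3.06,\sqrt{13})$ and, exactly as before (Lemma~7 in Chapter~1 of \cite{CF}), write $m=m(\underline{a})=f(\underline{a})$ for some $\underline{a}\in\{1,2\}^{\mathbb{Z}}=:\Sigma(C)$; the reduction to a $\{1,2\}$-sequence uses only $m<\sqrt{13}$, so lowering the left endpoint from $\sqrt{10}$ to $3.06$ changes nothing here. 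Both $B$ and $C$ are symmetric, so $c(B,C)$ is the maximum in \eqref{e.condition-m}. Instead of the crude estimate $c(B,C)\leq[2;\overline{2,2}]+[0;\overline{1,2}]=\sqrt{2}+\sqrt{3}$ used in \eqref{e.c-sqrt12}, the point is to evaluate that maximum directly: since $\Sigma(B)$ is a two-block code, any $\beta\in\Sigma^{+}(B)$ together with a $\Sigma(B)$-predecessor $\eta$ is forced to begin with at most one ``orphan'' symbol followed by a concatenation of $11$'s and $22$'s, and past the handful of symbols of $\eta$ the tail $\alpha\in\Sigma^{-}(C)$ is a free $\{1,2\}$-word. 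A short case analysis -- maximizing $[0;\beta]^{-1}$ against this forward block structure, and $[0;(\alpha\eta)^{t}]$ against the one forced predecessor symbol -- shows the maximum is attained at $\beta=[0;2,\overline{1}]$ (so $[0;\beta]^{-1}=[2;\overline{1,1}]$) paired with the backward datum $[0;2,\overline{2,1}]$, whence
\[
c(B,C)=[2;\overline{1,1}]+[0;2,\overline{2,1}]<3.0407<3.06\leq m .
\]

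Once $c(B,C)<m$ is in hand, everything downstream of \eqref{e.c-sqrt12} in Subsection~\ref{ss.basic-argument} applies verbatim. I would fix $\varepsilon>0$ with $[m-2\varepsilon,m+2\varepsilon]\cap L=\emptyset$ and $N$ with $f(\sigma^{j}(\underline{a}))<m-2\varepsilon$ for $|j|\geq N$, classify the continuations of $\dots a_{0}^{*}\dots a_{n}$ and of $a_{n}\dots a_{0}^{*}\dots$ into the cases (a), (b1), (b2), and invoke Lemma~\ref{l.key} to obtain the analogue of Proposition~\ref{p.M-L-constraint}: after possibly transposing $\underline{a}$, subcase (b2) does not occur for $n\geq k$. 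This is the only place the interval enters, and it enters solely through $m>c(B,C)$; that is exactly why enlarging the interval to $(3.06,\sqrt{13})$ is free. Consequently $m=a_{0}+[0;a_{1},\dots]+[0;a_{-1},\dots]$ with $[0;a_{-1},\dots]\in K(\{1,2\})$ and $[0;a_{1},\dots]\in K$, where $K$ is the union of a countable set and countably many Gauss--Cantor sets whose defining strings admit two continuations bracketing an interval disjoint from $K(B)=K(\{11,22\})$. As in Subsection~\ref{ss.basic-argument}, such strings obey $\alpha_{n}=112\alpha_{n+3}$ and $\beta_{n}=221\beta_{n+3}$, the cover of $K$ by the intervals $I(a_{1},\dots,a_{n})$ refines to $I(a_{1},\dots,a_{n},1,1,2)$ and $I(a_{1},\dots,a_{n},2,2,1)$, and the subdivision inequality \eqref{e.subdivision-sqrt12} with $s=0.174813$ -- which reduces to $(1/35)^{0.174813}+(1/81.98)^{0.174813}<1$ -- yields $HD(K)<0.174813$. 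Hence $(M\setminus L)\cap(3.06,\sqrt{13})\subset K(\{1,2\})+K$, and combining with Hensley's rigorous bound $HD(K(\{1,2\}))<0.531291$ from \cite{He},
\[
HD\big((M\setminus L)\cap(3.06,\sqrt{13})\big)\leq HD(K(\{1,2\}))+HD(K)<0.531291+0.174813=0.706104 .
\]

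The only genuinely new work, and the place I expect trouble, is the sharp evaluation of $c(B,C)$: one must verify that no admissible triple $(\beta,\eta,\alpha)$ beats $[2;\overline{1,1}]+[0;2,\overline{2,1}]$ -- in particular that the $\{11,22\}$ block structure genuinely \emph{couples} the forward and backward one-sided continued fractions, so that realizing the extremal forward value $[2;\overline{1,1}]$ forces the single predecessor symbol to be $2$ and hence caps the backward contribution at $[0;2,\overline{2,1}]$, rather than permitting a $[0;\overline{1,2}]$-type value as in the crude bound. Everything else is a transcription of Subsection~\ref{ss.basic-argument} with $\sqrt{10}$ replaced by $3.06$, and the closing arithmetic is as displayed.
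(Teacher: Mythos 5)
Your proposal reproduces the paper's own (very terse) argument: re-run Subsection~\ref{ss.basic-argument} verbatim with the sharper evaluation of $c(B,C)$. That is exactly what the paper intends. However, the ``genuinely new work'' you flag as a potential source of trouble is indeed where things go wrong, and your case analysis misses the decisive case.

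The claimed equality $c(B,C)=[2;\overline{1,1}]+[0;2,\overline{2,1}]\approx 3.0407$ only accounts for $\beta\in\Sigma^+(B)$ that begins with a \emph{shifted} (orphan) half-block, for which the one-symbol $\Sigma(B)$-predecessor $\eta$ is forced to equal $\beta_0$. But $\Sigma^+(\{11,22\}^{\mathbb{Z}})$ also contains $\beta=\overline{2}$, which begins on a block boundary. For that $\beta$, the predecessor $\eta$ is the \emph{second} half of a block whose first half lies in the free $\Sigma^-(C)$ tail, so $\eta$ is completely unconstrained: the bi-infinite sequence $\dots 1,1,2,2,2,2,\dots\in\{11,22\}^{\mathbb{Z}}$ shows that $\eta=1$ is an admissible predecessor of $\overline{2}$. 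Taking $\beta=\overline{2}$, $\eta=1$, $\alpha^t=\overline{2,1}$ yields
\[
[0;\beta]^{-1}+[0;(\alpha\eta)^{t}]
=(1+\sqrt{2})+[0;1,\overline{2,1}]
=(1+\sqrt{2})+[0;\overline{1,2}]
=(1+\sqrt{2})+(\sqrt{3}-1)
=\sqrt{2}+\sqrt{3}\approx 3.1463 ,
\]
so $c(B,C)\geq \sqrt{2}+\sqrt{3}>3.0407$. In other words, the crude bound of \eqref{e.c-sqrt12} is already tight, and the $\{11,22\}$ block structure does \emph{not} couple the two one-sided continued fractions when $\beta$ sits on a block boundary. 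As stated, Lemma~\ref{l.key} then only applies for $m>\sqrt{2}+\sqrt{3}$, so your (and the paper's) argument covers $(\sqrt{2}+\sqrt{3},\sqrt{13})$ but leaves the interval $(3.06,\sqrt{2}+\sqrt{3})$ unaddressed; the conclusion ``$(M\setminus L)\cap(3.06,\sqrt{13})\subset K(\{1,2\})+K$'' is not justified by the argument given. (The downstream appeal to Jackson's $121/212$-exclusion only covers $m<3.06$, so it does not plug this hole either.) You were right to be suspicious of the ``forced'' predecessor claim -- that is precisely where the argument fails, and one would need either a different symmetric block or an additional argument for $(3.06,\sqrt{2}+\sqrt{3})$ to salvage the stated interval.
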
 

This proposition implies that 
$$HD((M\setminus L)\cap(-\infty,\sqrt{13}))<\max\{HD((M\setminus L)\cap(-\infty,3.06)), 0.706104\}$$

On the other hand, as it is explained in Table 1 of Chapter 5 of Cusick--Flahive's book \cite{CF}, a result due to Jackson implies that if the Markov value of a sequence $\underline{a}\in\Sigma$ is $m(\underline{a})<3.06$, then $\underline{a}$ doesn't contain $1,2,1$ nor $2,1,2$. Thus, 
$$HD((M\setminus L)\cap(-\infty,3.06))\leq 2 \cdot HD(K(X_2(\{121,212\})))$$
where $K(X_2(\{121,212\})) = \{[0;\gamma]: \gamma\in\{1,2\}^{\mathbb{N}} \textrm{ not containing } 121, 212\}$. 

A quick implementation of the Jenkinson--Pollicott algorithm seems to indicate that 
$$HD(K(X_2(\{121,212\})))<0.365$$ 

Hence, our discussion so far gives that 
\begin{equation}\label{e.prova-i} 
HD((M\setminus L)\cap (-\infty,\sqrt{13})) < 0.73
\end{equation}

\subsection{Heuristic estimates for $HD((M\setminus L)\cap(\sqrt{13}, 3.84))$}

Our Proposition \ref{p.gap-Cantor-sqrt13-I} above implies that 
$$HD((M\setminus L)\cap(\sqrt{13}, 3.84)) < HD(X_3(\{13, 31\}))+0.281266$$
where $X_3(\{13,31\}):=\{[0;\gamma]:\gamma\in \{1,2,3\}^{\mathbb{N}} \textrm{ doesn't contain } 13 \textrm{ nor } 31\}$. 

After running Jenkinson--Pollicott algorithm, one seems to get that 
$$HD(X_3(\{13, 31\}))<0.574$$
and, \emph{a fortiori},  
\begin{equation}\label{e.prova-ii}
HD((M\setminus L)\cap(\sqrt{13}, 3.84)) < 0.856
\end{equation}

\subsection{Heuristic estimates for $HD((M\setminus L)\cap(3.84, 3.92))$}

Let $m\in M\setminus L$ with $3.84<m<3.92$. In this setting, $m=m(\underline{a})=f(\underline{a})$ for a sequence $\underline{a}=(\dots, a_{-1}, a_0, a_1, \dots)\in \{1,2,3\}^{\mathbb{Z}}=:\Sigma(C)$ not containing $131$, $313$, $231$, $132$. Consider the subshift $\Sigma(B)\subset\Sigma(C)$ associated to $B=\{1,2, 2321, 1232, 33\}$ with the restrictions that $33$ doesn't follow $1$ or $2321$, and $33$ is not followed by $1$ or $2321$. Note that $B$ and $C$ are symmetric, and the quantity $c(B,C)$ introduced above is 
\begin{equation*}
c(B,C) < 3.84 < m
\end{equation*} 
thanks to Lemma \ref{l.0}. 

From Lemma \ref{l.key} we obtain that, up to transposing $\underline{a}$, there exists $k\in\mathbb{N}$ such that, for all $n\geq k$, either $\dots a_0^*\dots a_n$ has a forced continuation $\dots a_0^*\dots a_n a_{n+1}\dots$ or two continuations $\dots a_0^*\dots a_n\alpha_n$ and $\dots a_0^*\dots a_n\beta_n$ with $[[0;\alpha_n],[0;\beta_n]]\cap K(B)=\emptyset$. From this, we are ready to set up an efficient cover of $(M\setminus L)\cap(3.84,3.92)$. In this direction, note that the condition $[[0;\alpha_n],[0;\beta_n]]\cap K(B)=\emptyset$ imposes two types of restrictions:
\begin{itemize}
\item $\alpha_n=33\alpha_{n+3}$ and $\beta_n=21\beta_{n+3}$;
\item $\alpha_n=23\alpha_{n+2}$ and $\beta_n\in\{113\beta_{n+3}, 1121\beta_{n+4}\}$.
\end{itemize} 

Hence, we have that the $s$-Hausdorff measure of the set 
$$K:=\{[a_0;a_1,\dots]: 3.84 < m(\underline{a}) < 3.92\}$$ is finite for any parameter $s$ with 
$$g(s)=\frac{|I(a_1,\dots,a_n,3,3)|^s + |I(a_1,\dots,a_n,2,1)|^s}{|I(a_1,\dots,a_n)|^s}<1$$
and 
$$h(s)=\frac{|I(a_1,\dots,a_n,2,3)|^s+|I(a_1,\dots,a_n,1,1,3)|^s+|I(a_1,\dots,a_n,1,1,2,1)|^s}{|I(a_1,\dots,a_n)|^s}<1$$ 
for all $(a_1,\dots,a_n)\in\bigcup\limits_{k\in\mathbb{N}}\{1,2,3\}^k$. 

We saw in Subsections \ref{ss.sqrt13-3.84} and \ref{ss.3.84-sqrt20} that 
$$g(s)\leq \frac{|I(a_1,\dots,a_n,3,3)|^s}{|I(a_1,\dots,a_n)|^s} + (0.071797)^s \quad \textrm{and} \quad h(s)\leq (0.016134)^s + \left(\frac{1}{63}\right)^s+\left(\frac{1}{84}\right)^s$$ 
Because the recurrence $q_{j+2}=a_{j+2}q_{j+1}+q_j$ implies that 
\begin{equation*}
\frac{|I(a_1,\dots,a_n,3,3)|}{|I(a_1,\dots,a_n)|} = \frac{r+1}{(3r+10)(4r+13)} 
\end{equation*}
where $r=q_{n-1}/q_n\in(0,1)$, and since $\frac{r+1}{(3r+10)(4r+13)}  \leq \frac{2}{221}$ for all $0\leq r\leq 1$, we get 
$$g(s)\leq \left(\frac{2}{221}\right)^s + (0.071797)^s$$ 

Thus, $\max\{g(0.25966), h(0.25966)\} < 0.99999$ and, \emph{a fortiori}, the $(0.25966)$-Hausdorff measure of 
$$K=\{[a_0;a_1,\dots]: 3.84 < m(\underline{a}) < 3.92\}$$ 
is finite. In particular, we proved the following result: 
\begin{proposition}\label{p.gap-Cantor-sqrt13-II}$(M\setminus L)\cap(3.84, 3.92)\subset X_3(\{131, 313, 231, 132\})+K$ where 
$$X_3(\{131, 313, 231, 132\}):=\{[0;\gamma]:\gamma\in \{1,2,3\}^{\mathbb{N}} \textrm{ not containing } 131, 313, 231, 132\}$$ and $K$ is a set of Hausdorff dimension $HD(K)<0.25966$.
\end{proposition}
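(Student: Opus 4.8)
The plan is to imitate, on the finer window $(3.84,3.92)$, the covering scheme already carried out in Subsections \ref{ss.sqrt13-3.84} and \ref{ss.3.84-sqrt20}, now working with the enlarged symmetric block $B=\{1,2,2321,1232,33\}$ (subject to the adjacency rules: $33$ neither follows nor precedes $1$ or $2321$). First I would record the Perron-type normal form: any $m\in M$ with $3.84<m<3.92$ is of the shape $m=m(\underline a)=f(\underline a)$ for some $\underline a\in\{1,2,3\}^{\mathbb Z}$, and moreover $\underline a$ contains none of the words $131,313,231,132$, since by Lemma \ref{l.0} (equivalently, via the relevant table in \cite{CF}) the appearance of any of these forces some $\lambda_i(\underline a)>3.92$. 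Next I would verify that $\Sigma(B)\subset\Sigma(C)$ are transitive and symmetric and that the threshold $c(B,C)$ defined in \eqref{e.condition-m} satisfies $c(B,C)<3.84<m$; this is precisely the hypothesis needed to apply Lemma \ref{l.key}.

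Invoking Lemma \ref{l.key} in its contrapositive form, exactly as in Proposition \ref{p.M-L-constraint}, yields the key dichotomy: after possibly replacing $\underline a$ by its transpose, there is $k\in\mathbb N$ such that for every $n\geq k$ the one-sided string $\dots a_0^*\dots a_n$ either has a unique continuation in $\Sigma(C)$ compatible with $m<3.92$, or admits exactly two continuations $\alpha_n,\beta_n$ whose associated interval $[[0;\alpha_n],[0;\beta_n]]$ is disjoint from $K(B)$. The combinatorial core is then to convert ``the interval misses $K(B)$'' into explicit constraints on the leading symbols of $\alpha_n$ and $\beta_n$. Reading off the complementary gaps of the Cantor set $K(B)$ from the allowed/forbidden transitions of $B$, I expect to find that the only possibilities are $\alpha_n=33\alpha_{n+3},\ \beta_n=21\beta_{n+3}$, or $\alpha_n=23\alpha_{n+2},\ \beta_n\in\{113\beta_{n+3},1121\beta_{n+4}\}$. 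This gives an efficient refinement rule: in a cover of the one-sided set $K:=\{[a_0;a_1,\dots]:3.84<m(\underline a)<3.92\}$ by cylinders $I(a_1,\dots,a_n)$, each cylinder may be replaced by the children dictated by these two cases.

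To finish, I would show that this refinement does not increase the $s$-Hausdorff pre-measure at $s=0.25966$. Writing the length ratios $|I(a_1,\dots,a_n,w)|/|I(a_1,\dots,a_n)|$ as rational functions of $r=q_{n-1}/q_n\in(0,1)$ via $q_{j+2}=a_{j+2}q_{j+1}+q_j$ and bounding them uniformly on $[0,1]$, most of the required bounds are already available from Subsections \ref{ss.sqrt13-3.84} and \ref{ss.3.84-sqrt20}; the single new ratio, for the word $33$, is $\frac{r+1}{(3r+10)(4r+13)}\leq\frac{2}{221}$. This produces $g(s)\leq(2/221)^s+(0.071797)^s$ and $h(s)\leq(0.016134)^s+(1/63)^s+(1/84)^s$, and one checks that both are $<1$ at $s=0.25966$. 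Hence the $(0.25966)$-Hausdorff measure of $K$ is finite, so $HD(K)<0.25966$; combining with the Perron normal form $m=[a_0;a_1,\dots]+[0;a_{-1},\dots]$, in which (after transposing if necessary) the first summand lies in the efficiently covered set $K$ and, since $\{131,313,231,132\}$ is reversal-invariant, the second lies in $X_3(\{131,313,231,132\})$, yields the asserted inclusion $(M\setminus L)\cap(3.84,3.92)\subset X_3(\{131,313,231,132\})+K$.

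The step I expect to be the main obstacle is establishing $c(B,C)<3.84$ for this particular $B$ with its adjacency constraints — the interplay of the rules ``$33$ neither follows nor precedes $1$ or $2321$'' with the maximization over predecessors in \eqref{e.condition-m} has to be handled carefully — together with the claim that the two listed continuation types genuinely exhaust all ways for $[[0;\alpha_n],[0;\beta_n]]$ to avoid $K(B)$. Both are finite combinatorial and continued-fraction verifications, but they are delicate enough that a single overlooked case would invalidate the covering estimate.
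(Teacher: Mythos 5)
Your proposal is correct and follows essentially the same route as the paper: same symmetric block $B=\{1,2,2321,1232,33\}$ with the same adjacency rules, same application of Lemma~\ref{l.key} to obtain the one-sided dichotomy, the same two families of gap types $\{33,21\}$ and $\{23,113,1121\}$, and the same uniform bounds on length ratios (with the only new ratio $\frac{r+1}{(3r+10)(4r+13)}\leq\frac{2}{221}$ for the word $33$) to check $\max\{g(s),h(s)\}<1$ at $s=0.25966$. The concerns you flag (verifying $c(B,C)<3.84$ despite the adjacency constraints on $33$, and confirming that the two listed continuation types exhaust the ways to avoid $K(B)$) are exactly the finite checks the paper carries out implicitly without spelling them out, so no genuinely different idea is involved.
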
 

After running Jenkinson--Pollicott algorithm, one seems to obtain that 
$$HD(X_3(\{131, 313, 231, 132\}))<0.612$$
so that the previous proposition indicates that   
\begin{equation}\label{e.prova-iii}
HD((M\setminus L)\cap(3.84, 3.92)) < 0.872
\end{equation}

\subsection{Heuristic estimates for $HD((M\setminus L)\cap(3.92, 4.01))$}\label{ss.3.9-4.01}

Let $m\in M\setminus L$ with $3.92<m<4.01$. In this setting, $m=m(\underline{a})=f(\underline{a})$ for a sequence $\underline{a}=(\dots, a_{-1}, a_0, a_1, \dots)\in \{1,2,3\}^{\mathbb{Z}}=:\Sigma(C)$ not containing $131$, $313$, $2312$, $2132$. Consider the subshift $\Sigma(B)\subset\Sigma(C)$ associated to $B=\{1,2, 211, 112, 232, 1133, 3311\}$ with the restrictions that $3311$ comes only after $211$ and $3311$ has to follow $2$, and $1133$ has to appear after $2$, and $1133$ has to follow $112$. Note that $B$ and $C$ are symmetric, and the quantity $c(B,C)$ introduced above is 
\begin{equation*}
c(B,C) < 3.92 < m
\end{equation*} 
thanks to Lemma \ref{l.0}. 

From Lemma \ref{l.key} we obtain that, up to transposing $\underline{a}$, there exists $k\in\mathbb{N}$ such that, for all $n\geq k$, either $\dots a_0^*\dots a_n$ has a forced continuation $\dots a_0^*\dots a_n a_{n+1}\dots$ or two continuations $\dots a_0^*\dots a_n\alpha_n$ and $\dots a_0^*\dots a_n\beta_n$ with $[[0;\alpha_n],[0;\beta_n]]\cap K(B)=\emptyset$. From this, we are ready to set up an efficient cover of $(M\setminus L)\cap(3.92,4.01)$. In this direction, note that the condition $[[0;\alpha_n],[0;\beta_n]]\cap K(B)=\emptyset$ imposes two types of restrictions:
\begin{itemize}
\item $\alpha_n=331\alpha_{n+3}$ and $\beta_n=21\beta_{n+3}$;
\item $\alpha_n=23\alpha_{n+2}$ and $\beta_n=113\beta_{n+3}$.
\end{itemize} 

Hence, we have that the $s$-Hausdorff measure of the set 
$$K:=\{[a_0;a_1,\dots]: 3.92 < m(\underline{a}) < 4.01\}$$ is finite for any parameter $s$ with 
$$g(s)=\frac{|I(a_1,\dots,a_n,3,3,1)|^s + |I(a_1,\dots,a_n,2,1)|^s}{|I(a_1,\dots,a_n)|^s}<1$$
and 
$$h(s)=\frac{|I(a_1,\dots,a_n,2,3)|^s+|I(a_1,\dots,a_n,1,1,3)|^s}{|I(a_1,\dots,a_n)|^s}<1$$ 
for all $(a_1,\dots,a_n)\in\bigcup\limits_{k\in\mathbb{N}}\{1,2,3\}^k$. 

We saw in Subsections \ref{ss.sqrt13-3.84} and \ref{ss.3.84-sqrt20} that 
$$g(s)\leq \frac{|I(a_1,\dots,a_n,3,3,1)|^s}{|I(a_1,\dots,a_n)|^s} + (0.071797)^s \quad \textrm{and} \quad h(s)\leq (0.016134)^s + \left(\frac{1}{63}\right)^s$$ 
Because the recurrence $q_{j+2}=a_{j+2}q_{j+1}+q_j$ implies that 
\begin{equation*}
\frac{|I(a_1,\dots,a_n,3,3,1)|}{|I(a_1,\dots,a_n)|} = \frac{r+1}{(4r+13)(7r+23)} 
\end{equation*}
where $r=q_{n-1}/q_n\in(0,1)$, and since $\frac{r+1}{(4r+13)(7r+23)} \leq \frac{1}{255}$ for all $0\leq r\leq 1$, we get 
$$g(s)\leq \left(\frac{1}{255}\right)^s + (0.071797)^s$$ 

Thus, $\max\{g(0.177645), h(0.177645)\} < 0.99999$ and, \emph{a fortiori}, the $(0.177645)$-Hausdorff measure of 
$$K=\{[a_0;a_1,\dots]: 3.92 < m(\underline{a}) < 4.01\}$$ 
is finite. In particular, we proved the following result: 
\begin{proposition}\label{p.gap-Cantor-sqrt13-II}$(M\setminus L)\cap(3.92, 4.01)\subset X_3(\{131, 313, 2312, 2132\})+K$ where 
$$X_3(\{131, 313, 2312, 2132\}):=\{[0;\gamma]:\gamma\in \{1,2,3\}^{\mathbb{N}} \textrm{ not containing } 131, 313, 2312, 2132\}$$ and $K$ is a set of Hausdorff dimension $HD(K)<0.167655$.
\end{proposition}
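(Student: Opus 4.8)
Since the body of Subsection~\ref{ss.3.9-4.01} has already carried out every computation that is needed, the plan is simply to package the preceding discussion into an argument of exactly the same shape as those of Subsections~\ref{ss.basic-argument}, \ref{ss.sqrt13-3.84} and~\ref{ss.3.84-sqrt20}.

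First I would fix $m\in(M\setminus L)\cap(3.92,4.01)$ and use Perron's description of Markov values to write $m=m(\underline a)=f(\underline a)$ for some $\underline a\in\{1,2,3\}^{\mathbb{Z}}$ which, by the standard continued-fraction estimates of Lemma~\ref{l.0} (any other configuration has Markov value $\geq 4.01$), contains none of the subwords $131$, $313$, $2312$, $2132$. Next I would check, again via Lemma~\ref{l.0}, that for the symmetric block $B=\{1,2,211,112,232,1133,3311\}$ with the stated adjacency restrictions one has $c(B,C)<3.92<m$, so that \eqref{e.condition-m} holds, and then apply Lemma~\ref{l.key}: if for arbitrarily large $n$ both the truncation $\dots a_0^*\dots a_n$ and the truncation $a_{-n}\dots a_0^*\dots$ admitted two distinct completions whose interval of continued fractions meets $K(B)$, then Lemma~\ref{l.key} would force $m\in L$, contradicting $m\in M\setminus L$. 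Hence, after transposing $\underline a$ if necessary, for every large $n$ the truncation $\dots a_0^*\dots a_n$ either has a forced continuation or admits exactly two continuations $\alpha_n,\beta_n$ with $[[0;\alpha_n],[0;\beta_n]]\cap K(B)=\emptyset$.

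Then I would carry out the combinatorial enumeration: over the alphabet $\{1,2,3\}$, the only pairs $(\alpha_n,\beta_n)$ whose interval avoids $K(B)$ are $(331\alpha_{n+3},21\beta_{n+3})$ and $(23\alpha_{n+2},113\beta_{n+3})$. This lets one refine any covering of the ``gap'' Cantor set $K=\{[a_0;a_1,\dots]:3.92<m(\underline a)<4.01\}$ by the intervals $I(a_1,\dots,a_n)$, replacing each such interval by the sub-intervals $I(a_1,\dots,a_n,w)$ with $w\in\{331,21\}$ in the first case and $w\in\{23,113\}$ in the second. Using the recurrence $q_{j+2}=a_{j+2}q_{j+1}+q_j$ to write each ratio $|I(a_1,\dots,a_n,w)|/|I(a_1,\dots,a_n)|$ as a rational function of $r=q_{n-1}/q_n\in(0,1)$ and maximising it over $[0,1]$, one checks that the two subdivision functions $g$ and $h$ satisfy $\max\{g(s),h(s)\}<1$ at the parameter $s=0.177645$ found above; hence the $s$-Hausdorff measure of $K$ is finite, which is the dimension bound for $K$ in the statement. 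Since $\{131,313,2312,2132\}$ is closed under reversal, the backward tail $[0;a_{-1},\dots]$ lies in $X_3(\{131,313,2312,2132\})$, so the splitting $m=a_0+[0;a_1,\dots]+[0;a_{-1},\dots]$ yields $(M\setminus L)\cap(3.92,4.01)\subset X_3(\{131,313,2312,2132\})+K$, as claimed.

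The genuinely delicate point is the design of the symmetric block $B$: it must be rich enough to push $c(B,C)$ strictly below the left endpoint $3.92$ (so that Lemma~\ref{l.key} applies throughout $(3.92,4.01)$), yet so structured --- through the adjacency restrictions on $1133$ and $3311$ and the fact that $1133$ reverses to $3311$, which is what makes $B$ symmetric --- that ``living in the gaps of $K(B)$'' forces $\alpha_n$ and $\beta_n$ to begin with the long, small-denominator blocks listed above; this rigidity is exactly what keeps $HD(K)$ small. Everything else is the routine continued-fraction bookkeeping already performed in the text (the forbidden-word estimates, the bound $c(B,C)<3.92$, and the maximisation of the length-ratio functions), and the resulting Hausdorff-dimension estimate for $(M\setminus L)\cap(3.92,4.01)$ then follows by bounding $HD(X_3(\{131,313,2312,2132\}))$ with the Jenkinson--Pollicott algorithm \cite{JP01}.
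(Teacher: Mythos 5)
Your proposal reproduces the paper's own argument essentially verbatim: same symmetric block $B=\{1,2,211,112,232,1133,3311\}$ with the same adjacency constraints, same invocation of Lemma~\ref{l.key} with $c(B,C)<3.92$, same two bifurcation types $(331,21)$ and $(23,113)$, and the same $s$-Hausdorff-measure estimate via the length-ratio functions $g$ and $h$. The closing observation that the forbidden set $\{131,313,2312,2132\}$ is closed under reversal, so the backward tail lives in $X_3(\{131,313,2312,2132\})$, is an appropriate (and welcome) elaboration of a point the paper leaves implicit.

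One thing you should flag rather than quietly absorb: your estimate certifies that the $(0.177645)$-Hausdorff measure of $K$ is finite, hence $HD(K)\leq 0.177645$; it does \emph{not} certify the bound $HD(K)<0.167655$ written in the proposition. You call $0.177645$ ``the dimension bound for $K$ in the statement,'' but the statement says $0.167655$, and a direct check shows $g(0.167655)>1$, so the refinement procedure does not prove the smaller figure. This appears to be a typo in the paper's displayed proposition (the number $0.167655$ belongs to the \emph{next} interval $(4.01,\sqrt{20})$, and the paper's own downstream total $0.65+0.178\approx 0.828$ is consistent with $0.177645$, not with $0.167655$). Your argument is correct for the bound $HD(K)<0.177645$; you should either say that the stated $0.167655$ cannot be obtained this way, or note explicitly that you are proving the proposition with $0.167655$ replaced by $0.177645$.
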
 

After running Jenkinson--Pollicott algorithm, one seems to obtain that 
$$HD(X_3(\{131, 313, 2312, 2132\}))<0.65$$
so that the previous proposition indicates that   
\begin{equation}\label{e.prova-iv}
HD((M\setminus L)\cap(3.92, 4.01)) < 0.828
\end{equation}

\subsection{Heuristic estimates for $HD((M\setminus L)\cap(4.01,\sqrt{20}))$} 

Let $m\in M\setminus L$ with $4.01<m<\sqrt{20}$. In this setting, $m=m(\underline{a})=f(\underline{a})$ for a sequence $\underline{a}=(\dots, a_{-1}, a_0, a_1, \dots)\in \{1,2,3\}^{\mathbb{Z}}=:\Sigma(C)$. Consider the subshift $\Sigma(B)\subset\Sigma(C)$ associated to 
$$B=\{11,2, 232, 213312, 33\}$$ Note that $B$ and $C$ are symmetric, and the quantity $c(B,C)$ introduced above is 
\begin{equation*}
c(B,C) < 4.01 < m
\end{equation*} 
thanks to Lemma \ref{l.0}. 

From Lemma \ref{l.key} we obtain that, up to transposing $\underline{a}$, there exists $k\in\mathbb{N}$ such that, for all $n\geq k$, either $\dots a_0^*\dots a_n$ has a forced continuation $\dots a_0^*\dots a_n a_{n+1}\dots$ or two continuations $\dots a_0^*\dots a_n\alpha_n$ and $\dots a_0^*\dots a_n\beta_n$ with $[[0;\alpha_n],[0;\beta_n]]\cap K(B)=\emptyset$. From this, we are ready to set up an efficient cover of $(M\setminus L)\cap(4.01,\sqrt{20})$. In this direction, note that the condition $[[0;\alpha_n],[0;\beta_n]]\cap K(B)=\emptyset$ imposes two types of restrictions:
\begin{itemize}
\item $\alpha_n=331\alpha_{n+3}$ and $\beta_n=213\beta_{n+3}$;
\item $\alpha_n=23\alpha_{n+2}$ and $\beta_n=113\beta_{n+3}$.
\end{itemize} 

Hence, we have that the $s$-Hausdorff measure of the set 
$$K:=\{[a_0;a_1,\dots]: 4.01 < m(\underline{a}) < \sqrt{20}\}$$ is finite for any parameter $s$ with 
$$g(s)=\frac{|I(a_1,\dots,a_n,3,3,1)|^s + |I(a_1,\dots,a_n,2,1,3)|^s}{|I(a_1,\dots,a_n)|^s}<1$$
and 
$$h(s)=\frac{|I(a_1,\dots,a_n,2,3)|^s+|I(a_1,\dots,a_n,1,1,3)|^s}{|I(a_1,\dots,a_n)|^s}<1$$ 
for all $(a_1,\dots,a_n)\in\bigcup\limits_{k\in\mathbb{N}}\{1,2,3\}^k$. 

We saw in Subsections \ref{ss.sqrt13-3.84}, \ref{ss.3.84-sqrt20} and \ref{ss.3.9-4.01} that 
$$g(s)\leq \left(\frac{1}{255}\right)^s + \frac{|I(a_1,\dots,a_n,2,1,3)|^s}{|I(a_1,\dots,a_n)|^s} \quad \textrm{ and } \quad h(s)\leq (0.016134)^s + \left(\frac{1}{63}\right)^s$$ 
Since the recurrence $q_{j+2}=a_{j+2}q_{j+1}+q_j$ implies that 
\begin{equation*}
\frac{|I(a_1,\dots,a_n,2,1,3)|}{|I(a_1,\dots,a_n)|} = \frac{r+1}{(4r+11)(5r+14)}
\end{equation*}
where $r=q_{n-1}/q_n\in(0,1)$, and since $\frac{r+1}{(4r+11)(5r+14)}\leq 0.007043$ for all $0\leq r\leq 1$, we get 
$$g(s)\leq \left(\frac{1}{255}\right)^s + (0.007043)^s$$ 

Thus, $\max\{g(0.167655), h(0.167655)\} < 0.9999$ and, \emph{a fortiori}, the $(0.167655)$-Hausdorff measure of 
$$K=\{[a_0;a_1,\dots]: 4.01 < m(\underline{a}) < \sqrt{20}\}$$ 
is finite. In particular, we proved the following result: 
\begin{proposition}\label{p.gap-Cantor-sqrt13-II}$(M\setminus L)\cap(4.01, \sqrt{20})\subset K(\{1,2,3\})+K$ where $K$ is a set of Hausdorff dimension $HD(K)<0.167655$.
\end{proposition}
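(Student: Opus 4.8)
The plan is to assemble, in this order, a realization of $m$ by a well-chosen bi-infinite sequence, the dynamical dichotomy coming from Lemma~\ref{l.key}, a purely combinatorial analysis of the admissible branchings, and a self-improving covering estimate; this mirrors the arguments already carried out for the neighbouring intervals. First, recall that any $m\in(M\setminus L)\cap(4.01,\sqrt{20})$ can be written as $m=m(\underline a)=f(\underline a)$ for some $\underline a=(a_n)_{n\in\mathbb Z}\in\{1,2,3\}^{\mathbb Z}=:\Sigma(C)$, hence
$$m=a_0+[0;a_1,a_2,\dots]+[0;a_{-1},a_{-2},\dots],$$
with $[0;a_{-1},a_{-2},\dots]\in K^-(C)=K(\{1,2,3\})$ since $C$ is symmetric. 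So it will suffice to trap the forward tail $[0;a_1,a_2,\dots]$ inside a set $K$ with $HD(K)<0.167655$, the value of $a_0$ and the possible left/right transposition of $\underline a$ being irrelevant for Hausdorff dimension.

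Second, I would apply Lemma~\ref{l.key} to the symmetric pair $\Sigma(B)\subset\Sigma(C)$ generated by $B=\{11,\,2,\,232,\,213312,\,33\}$. A finite continued-fraction estimate using Lemma~\ref{l.0} yields $c(B,C)<4.01\le m$, so the three hypotheses of Lemma~\ref{l.key} cannot all hold; because $m\notin L$, this forces (after possibly transposing $\underline a$) the existence of $k\in\mathbb N$ such that for every $n\ge k$ the finite string $\dots a_0^*\dots a_n$ either has a unique admissible continuation, or has exactly two admissible continuations $\dots a_0^*\dots a_n\alpha_n$ and $\dots a_0^*\dots a_n\beta_n$ with $[[0;\alpha_n],[0;\beta_n]]\cap K(B)=\emptyset$. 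Examining the gap structure of the Gauss--Cantor set $K(B)$ (and using $m<\sqrt{20}$ to keep the nearby values $\lambda_i$ under control) shows that in the branching case the two prefixes are forced into one of exactly two patterns: either $\alpha_n=331\dots$ and $\beta_n=213\dots$, or $\alpha_n=23\dots$ and $\beta_n=113\dots$.

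Third, I would convert this dichotomy into an efficient cover of the Cantor set $K$ of forward tails. The forced-continuation branch only ever contributes a countable set, so one may restrict attention to tails that branch infinitely often; for those, starting from a cylinder interval $I(a_1,\dots,a_n)$, one replaces it by the two deeper cylinders dictated by whichever pattern applies. Using $q_{j+2}=a_{j+2}q_{j+1}+q_j$ to express the length ratios as functions of $r=q_{n-1}/q_n\in(0,1)$, this subdivision does not increase the $s$-dimensional Hausdorff measure provided
$$g(s)=\frac{|I(a_1,\dots,a_n,3,3,1)|^s+|I(a_1,\dots,a_n,2,1,3)|^s}{|I(a_1,\dots,a_n)|^s}<1,\qquad h(s)=\frac{|I(a_1,\dots,a_n,2,3)|^s+|I(a_1,\dots,a_n,1,1,3)|^s}{|I(a_1,\dots,a_n)|^s}<1.$$
The uniform bounds $g(s)\le(1/255)^s+(0.007043)^s$ and $h(s)\le(0.016134)^s+(1/63)^s$ (the relevant ratio maxima on $[0,1]$ were already computed in the earlier subsections) reduce this to two elementary one-variable inequalities, which hold at $s=0.167655$. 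Iterating the subdivision gives $HD(K)\le 0.167655$, and plugging this back into the sum decomposition above yields $(M\setminus L)\cap(4.01,\sqrt{20})\subset K(\{1,2,3\})+K$.

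The main obstacle is the combinatorial step: one must identify \emph{all} pairs $(\alpha_n,\beta_n)\in(\{1,2,3\}^{\mathbb N})^2$ for which the interval $[[0;\alpha_n],[0;\beta_n]]$ can fit inside a gap of $K(B)$ — that is, precisely which short prefixes $\alpha_n,\beta_n$ may begin with — without overlooking a case, since this is exactly what pins down the exponent $0.167655$. By comparison, the choice of the block $B$, the verification that $c(B,C)<4.01$, and the uniform estimates on the cylinder-length ratios are routine.
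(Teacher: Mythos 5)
Your proposal is correct and follows the paper's own argument essentially verbatim: the same symmetric block $B=\{11,2,232,213312,33\}$ with $c(B,C)<4.01$, the same application of Lemma~\ref{l.key} to get the one-sided branching dichotomy, the same two branching patterns $(331\dots,\,213\dots)$ and $(23\dots,\,113\dots)$ inside the gaps of $K(B)$, and the same cylinder-ratio bounds $(1/255)^s+(0.007043)^s$ and $(0.016134)^s+(1/63)^s$ evaluated at $s=0.167655$. You also correctly pinpoint the one place the paper itself leaves implicit — checking that these two branching patterns really exhaust the possible gap-avoiding bifurcations of $K(B)$ — as the step carrying the combinatorial weight of the argument.
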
 

As usual, this proposition and Hensley's estimate \cite{He} for $HD(K(\{1,2,3\}))$ yields:  
\begin{equation}\label{e.prova-v}
HD((M\setminus L)\cap(4.01,\sqrt{20}))  
< 0.705661 + 0.167655 = 0.873316
\end{equation}

\subsection{Heuristic estimates for $HD((M\setminus L)\cap(\sqrt{20}, \sqrt{21}))$}

Our Proposition \ref{p.gap-Cantor-sqrt20} above implies that 
$$HD((M\setminus L)\cap(\sqrt{20}, \sqrt{21})) < HD(X_4(\{14, 41, 24, 42\}))+0.172825$$
where $X_4(\{14,41,24,42\}):=\{[0;\gamma]:\gamma\in \{1,2,3,4\}^{\mathbb{N}} \textrm{ doesn't contain } 14,  41, 24, 42\}$. 

After running Jenkinson--Pollicott algorithm, one seems to get that 
$$HD(X_4(\{14, 41, 24, 42\}))<0.715$$
and, \emph{a fortiori},  
\begin{equation}\label{e.prova-vi}
HD((M\setminus L)\cap(\sqrt{20}, \sqrt{21})) < 0.888
\end{equation}

\subsection{Global empirical estimate for $HD(M\setminus L)$}

By \eqref{e.prova-i}, \eqref{e.prova-ii}, \eqref{e.prova-iii}, \eqref{e.prova-iv}, \eqref{e.prova-v} and \eqref{e.prova-vi}, we have that 
$$HD((M\setminus L)\cap (-\infty, \sqrt{21})) < 0.888$$

On the other hand, Freiman \cite{Fr73b} and Schecker \cite{Sch77} proved that $[\sqrt{21},\infty)\subset L$. Therefore, $(M\setminus L)\cap [\sqrt{21},\infty)=\emptyset$. 

It follows $HD(M\setminus L)=HD((M\setminus L)\cap (-\infty, \sqrt{21}))<0.888$, the empirical bound announced in \eqref{e.prova}.


\begin{thebibliography}{99}



\bibitem{C} T.~Cusick, \emph{The connection between the Lagrange and Markoff spectra}, Duke Math. J. 42 (1975), 507--517.

\bibitem{CF} T.~Cusick and M.~Flahive, \emph{The Markoff and Lagrange spectra}, Mathematical Surveys and Monographs, 30. American Mathematical Society, Providence, RI, 1989. x+97 pp.



\bibitem{Fr68} G.~A.~Freiman, \emph{Noncoincidence of the Markoff and Lagrange spectra},
Mat. Zametki 3 (1968), 195--200; English transl., Math. Notes 3 (1968),125--128. 

\bibitem{Fr73b} G.~A.~Freiman, \emph{The initial point of Hall's ray}, Number-theoretic studies in the Markov spectrum and in the structural theory of set addition, pp. 87--120, 121--125. Kalinin. Gos. Univ., Moscow, 1973.

\bibitem{Fr75} G.~A.~Freiman, \emph{Diophantine approximation and geometry of numbers (The
Markoff spectrum)}, Kalinin. Gos. Univ., Moscow, 1975. 

\bibitem{F} M.~Gbur, \emph{Accumulation points of the Lagrange and Markov spectra},  Monatsh. Math. 84 (1977), 91--108.

\bibitem{Ha} M.~Hall, \emph{On the sum and product of continued fractions},
Ann. of Math. 48 (1947), 966--993. 

\bibitem{KH} B. Hasselblatt and A. Katok, \emph{Introduction to the modern theory of dynamical systems}, with a supplementary chapter by Katok and Leonardo Mendoza. Encyclopedia of Mathematics and its Applications, 54. Cambridge University Press, Cambridge, 1995. xviii+802 pp.


\bibitem{He} D.~Hensley, \emph{A polynomial time algorithm for the Hausdorff dimension of continued fraction Cantor sets}, J. Number Theory 58 (1996), no. 1, 9--45. 

\bibitem{Ma1} A.~Markov, \emph{Sur les formes quadratiques binaires ind\'efinies},  
Math. Ann. 17 (1879), no. 3, 381--406.

\bibitem{Ma2} A. Markov, \emph{Sur les formes quadratiques binaires ind\'efinies II},  
Math. Ann. 17 (1880), no. 3, 379--399.

\bibitem{MaMo1} C.~Matheus and C.~G.~Moreira, $HD(M\setminus L)>0.353$, preprint 2017 available at arXiv:1703.04302, to appear in Acta Arith. 

\bibitem{MaMo2} C.~Matheus and C.~G.~Moreira, \emph{Markov spectrum near Freiman's isolated points in $M\setminus L$}, J. Number Theory 194 (2019), 390--408.

\bibitem{Mo} C.~G.~Moreira, \emph{Geometric properties of the Markov
and Lagrange spectra}, Ann. of Math. 188 (2018), 145--170. 

\bibitem{JP01} O.~Jenkinson and M.~Pollicott, \emph{Computing the dimension of dynamically defined sets: $E_2$ and bounded continued fractions}, Ergodic Theory Dynam. Systems 21 (2001), no. 5, 1429--1445.

\bibitem{JP16} O.~Jenkinson and M.~Pollicott, \emph{Rigorous effective bounds on the Hausdorff dimension of continued fraction Cantor sets: a hundred decimal digits for the dimension of $E_2$}, Adv. Math. 325 (2018), 87--115.


\bibitem{Sch77} H.~Schecker, \emph{\"Uber die Menge der Zahlen, die als Minima quadratischer Formen auftreten}, J. Number Theory 9 (1977), no. 1, 121--141.



\end{thebibliography}
\end{document}